\newtheorem{theoreme}{Theorem}[section]
\newtheorem{lemme}{Lemma}[section]
\newtheorem{remarque}{Remark}[section]
\newtheorem{proposition}{Proposition}[section]
\newtheorem{definition}{Definition}[section]
\newcommand{\R}{\mathbb{R}}
\newcommand{\N}{\mathbb{N}}
\newcommand{\norm}[1]{\left\Vert#1\right\Vert}
\renewcommand{\t}{\theta}
\newcommand{\bc}{\begin{center}}
	\newcommand{\ec}{\end{center}}
\newcommand{\benu}{\begin{enumerate}}
	\newcommand{\eenu}{\end{enumerate}}
\newcommand{\real}{\mathbb R}
\title{\bf {On Conditional least squares estimation for the $\mathit{AD}(1,n)$ model}}
\author[1]{Mohamed Ben Alaya}
\author[1,2]{Houssem Dahbi\thanks{  \sffamily Corresponding author}}
\author[2]{Hamdi Fathallah}
\affil[1]{Universit\'e de Rouen Normandie, Laboratoire de Math\'ematiques Rapha\"el Salem, Avenue de l'universit\'e, BP.12 F76801 Saint-\' Etienne-du-Rouvray, (France)}
\affil[2]{Universit\'e de Sousse, Laboratoire LAMMDA,\'Ecole Sup\'erieure des Sciences et de Technologie de Hammam Sousse, Rue Lamine El Abbessi 4011 Hammam Sousse, (Tunisie)}	
\begin{document}
	\maketitle

	\begin{abstract}
		This paper deals with the problem of global parameter estimation of $\mathit{AD}(1,n)$ where $n$ is a positive integer which is a subclass of affine diffusions introduced by Duffie, Filipovic, and
		Schachermayer in \cite{Duffie}. In general affine models are applied to the pricing of bond and stock options, which is illustrated for the Vasicek, Cox-Ingersoll-Ross and Heston models. Our main results are about the conditional least squares estimation of $\mathit{AD}(1,n)$ drift parameters based on two types of observations : continuous time observations and discrete time observations with high frequency and infinite horizon. Then, for each case, we study the asymptotic properties according to ergodic and non-ergodic cases. This paper introduces as well some moment results relative to the $\mathit{AD}(1,n)$ model.  
		 
		\end{abstract}
	\makeatletter{\renewcommand*{\@makefnmark}{}
		\footnotetext{E-mail adresses:\\
			\url{mohamed.ben-alaya@univ-rouen.fr}\\
			\url{houssem.dahbi@univ-rouen.fr}, \url{houssemdahbi@essths.u-sousse.tn}\\
			\url{hamdi.fathallah@essths.u-sousse.tn}\\			
		\quad Key words: affine diffusion (AD), parameter inference, conditional least squares estimation, continuous observations, discrete observations, high frequency, infinite horizon, $\mathit{AD}(1,n)$ classification, ergodicity, $\mathit{AD}(1,n)$ moments.}\makeatother}				

\section{Introduction}
The set of affine processes contains a large class of
important Markov processes such as continuous state branching processes. One of the most important Markov affine models is that of an affine diffusion model which is well characterized on the state space $\R_+^m\times\R^n$ with $m$ and $n$ are non-negative integers (see, e.g., \cite{Duffie}). In this paper, we study the subclass $\mathit{AD}(1,n)$ models introduced in \cite{Dahbi} where we consider the process $Z=(Y,X)^{\mathsf{T}}$ in the $d$-dimensional canonical state space $\mathcal{D}:=[0,\infty)\times (-\infty,\infty)^n$, for
$d=n+1$, strong solution of the following stochastic differential equation (SDE)

\begin{equation}
	\begin{cases}
		\mathrm{d}Y_t=(a-b Y_t) \mathrm{~d}t+\rho_{11}\sqrt{Y_t}\mathrm{~d}B_t^1\\
		\mathrm{d}X_t=(m-\kappa Y_t-\theta X_t) \mathrm{~d}t +\sqrt{Y_t}\tilde{\rho} \mathrm{~d}B_t,
	\end{cases}\quad  t \in \left[ 0, \infty \right), 
	\label{model01}
\end{equation}
where $B_t=(B_t^1,\ldots,B_t^d)^{\mathsf{T}}$, $t\in[0,\infty)$, is a $d$-dimensional independent Brownian vector, $(Y_0,X_0)^{\mathsf{T}}$ is an arbitrary initial value independent of $B$ such that $\mathbb{P}(Y_0\in(0,\infty))=1$, $a\in(0,\infty)$, $m\in(-\infty,\infty)^n$, $b\in(-\infty,\infty)$, $\kappa\in(-\infty,\infty)^n$, $\theta$ is an $n\times n$ real matrix and  $\tilde{\rho}=[\rho_{J1}\;\rho_{JJ}]$, where $J=\lbrace 2,\ldots,d\rbrace$, $\rho_{J1}=(\rho_{i1})_{i\in J}$ and $\rho_{JJ}=(\rho_{ij})_{i,j\in J}$ which are introduced via the block representation of the $d\times d$ positive definite triangular low matrix $\rho=\begin{pmatrix}
	\rho_{11}&\rho_{1J}\\
	\rho_{J1}&\rho_{JJ}
	
\end{pmatrix}$ satisfying $\sum_{j=1}^d {\rho}^2_{ij}=\sigma_i^2$, for all $i\in\lbrace 1,\ldots,d\rbrace$, where $\mathbf{\sigma}=(\sigma_1,\ldots,\sigma_d)^{\mathsf{T}}\in (0,\infty)^d$. Note that the first component of the $\mathit{AD}(1,n)$ model is a CIR process which is also a continuous state branching process with branching mechanism $by+\dfrac{\rho_{11}^2}{2}y^2$ and immigration mechanism $ay$ and conditionally on $\sigma (Y_t, t\in[0,\infty))$ the second component is an $n$-dimensional Vasicek process.
	
The $\mathit{AD}(1,n)$, as an affine process, has been employed in finance since the last decades and they have found growing interest due to their computational tractability as well as their capability to capture empirical evidence from financial time series, see, e.g., Duffie et al \cite{Duffie} and  Filipovic and Mayerhofer in \cite{Filipovic}, where they gave more financial applications of affine models and proved existence and uniqueness through stochastic invariance of the canonical state space. The $\mathit{AD}(1,n)$ model $\eqref{model01}$ covers different known diffusion models as Vasicek, Cox-Ingersoll-Ross and Heston models. Further there is a vast literature on affine models, see, e.g., \cite{Barczy2,Basak,Alaya,Boylog,Duffie,Fathallah,Fathallah-Kebaier,Filipovic,Friesen,Zhang}. Moreover, several papers have studied the drift parameter estimation for affine submodels with different statistical settings based on continuous or discrete observations and using different types of estimators such as the maximum likelihood estimators (MLE) and conditional least squares estimators (CLSE), see, e.g., \cite{Barczy,Barczy2,Basak,Dahbi,Alaya,Alaya1,Boylog,Fathallah} and the references therein. It should be noted that statistical inferences of $\mathit{AD}(1,n)$ model and its submodels are rarely treated in the multidimensional case, see, e.g., \cite{Basak} and \cite{Dahbi}.

In the previous work \cite{Dahbi}, on the one hand, we established the stationarity for the $\mathit{AD}(1,n)$ model when $a\in\left[0,\infty\right),\ b\in(0,\infty)$ and $\theta$ a diagonalizable positive definite matrix. We also proved the exponential ergodicity of the $\mathit{AD}(1,n)$ for $a\in(0,\infty)$, $b\in(0,\infty)$ and $\theta$ a diagonalizable positive definite matrix. On the other hand, we introduced a classification of the $\mathit{AD}(1,n)$ processes based on the asymptotic behavior of its expectation to distinguish three cases : subcritical case when $b\wedge \lambda_{\min}(\theta)\in(0,\infty)$, critical case either when $b\in[0,\infty)$ and $\lambda_{\min}(\theta)=\lambda_{\max}(\theta)=0$ or when $b=0$ and $\lambda_{\min}(\theta)\in(0,\infty)$ and supercritical case when $b\wedge\lambda_{\max}(\theta)\in(-\infty,0)$, for more details see Proposition \ref{classification}. In the statistical part, we  studied the asymptotic properties of the maximum likelihood estimators in the subcritical case and a special supercritical case. In the subcritical case, we proved the strong consistency of the MLE and its asymptotic normality when $a\in(\frac{1}{2},\infty)$, $b\in (0,\infty)$ and $\theta$ is a diagonalizable positive definite matrix. For a special supercritical case, we showed the asymptotic normality of the maximum likelihood estimators with an exponential rate of convergence. Note that we were not able to treat all different cases using the MLE due to the involved terms $\int_0^t \frac{X_s^{(i)}X_s^{(j)}}{Y_s}$$\mathrm{~d}s,$ where asymptotic behavior as $t$ tends to infinity is still open in non-ergodic cases.
 Bolyog and Pap \cite{Boylog} considered the $\mathit{AD}(1,1)$ model called also two-factor affine model with a third Wiener process added to its second component. For a fixed $T\in(0,\infty)$, they constructed the CLSE of the drift parameters based on continuous time observations $(Y_t,X_t)_{t\in[0,T]}$ using the idea that such estimators are nothing but the high frequency limit in probability as $N\to\infty$ of the CLSE based on discrete time observations $(Y_{\frac{k}{N}},X_{\frac{k}{N}})_{k\in\{0,\ldots,\lfloor NT\rfloor\}}$. In the subcritical case, they proved strong consistency and asymptotic normality of the CLSE. In a special critical case and under the additional assumption $\kappa=0$, they showed weak consistency for the CLSE of $(b,\kappa,\theta)$ and they determined the asymptotic behavior for the global CLSE. In a special supercritical case under the additional assumption, $m\kappa\in(-\infty,0)$, they proved the strong consistency for the CLSE of $b$, weak consistency for the CLSE of $(\kappa,\theta)$ and asymptotic mixed normality for the global CLSE.
 
Ben Alaya and Kebaier \cite{Alaya} studied the CIR parameter estimation using the maximum likelihood estimation method based on continuous time observations $(Y_{t})_{t\in[0,T]}$. They established different asymptotic theorems on the error estimation with different rates of convergence and types of limit distributions in the ergodic and non-ergodic cases. They studied as well the estimation based on discrete time observations $(Y_{k\Delta_N})_{k\in\{0,\ldots,N\}}$ of high frequency and infinite horizon, i.e., $\Delta_N\to 0$ and $N\Delta_N\to\infty$ as $N\to\infty$. In this case, the estimator is obtained by discretizing the MLE of the continuous setting. they gave sufficient conditions on stepsize $\Delta_N$ under which the error correctly normalized converges and the limit theorems found for the continuous version can be easily carried out to the discrete one.\\

In this paper, inspired of \cite{Boylog}, we study at first statistical inferences of the continuous CLSE for an $\mathit{AD}(1,n)$ process based on continuous time observations $(Z_t)_{0\leq t\leq T}$, $T\in(0,\infty)$. In this case, the CLSE is well defined through \eqref{fT1 GT1} and \eqref{fT2 GT2} and we present its asymptotic properties in the subcritical, a special critical and a special supercritical cases, see respectively theorems \ref{Asymptotic Normality CLSE b>0 Continuous}, \ref{Théorème SCC continuous observations} and \ref{normality theorem supercritical case}. In a second part, inspired of \cite{Alaya}, we study the estimation problem based on discrete time observations, we suppose that we observe the process at equidistant instants $t_k=k\Delta_N,\ 0\leq k\leq N$ and we consider its correspondent CLSE. Under the conditions of high frequency $\Delta_N\to0$ and infinite horizon $N\Delta_N\to\infty$, as $N\to\infty$, we study asymptotic properties of the discrete CLSE in the subcritical, special critical and special supercritical cases, see respectively Theorem \ref{Asymptotic Normality CLSE b>0 DISCRETE}, Theorem \ref{Asymptotic Normality SCC DISCRETE} and Theorem \ref{normality theorem supercritical DISCRETE case}. Note that using this approach the limit theorems established in this setting are obtained from the ones of the continuous version.\\

The remainder of the paper is organized as follows: In the second section, we present all the principal notations and tools. In section $3$, we present the CLSE of the drift parameters based on continuous time observations and the results related to. the fourth section is devoted to the study of the discrete CLSE. In the appendix, we recall some important theorems used in this work and we give some moment results related to the $\mathit{AD}(1,n)$ processes.

\section{Preliminary}
Let $\N,\; \R, \;\mathbb{R}_{+},\; \mathbb{R}_{++}, \; \mathbb{R}_{-}, \; \mathbb{R}_{--}$ and $\mathbb{C}$
denote the sets of non-negative integers, real numbers, non-negative real numbers, positive real numbers, non-positive real numbers, negative real numbers and complex numbers, respectively and let $\mathcal{D}=\R_{+}\times\R^n$, for $n \in \N \setminus \lbrace 0 \rbrace.$ For $x,y\in\R$, we will use the notation $x\wedge y:=\min(x,y)$ and $x\vee y:=\max(x,y)$. 
For all $z\in\mathbb{C}$, \texttt{Re}($z$) denotes the real part of $z$ and \texttt{Im}($z$) denotes the imaginary part of $z$. Let us denote, for $p,q\in\N \setminus \{0\}$,  by $\mathcal{M}_{p,q}$ the set of $p\times q$ real matrices, $\mathcal{M}_p$ the set of $p\times p$ real matrices, $\mathbf{I}_p$ the identity matrix in $\mathcal{M}_{p}$, $\textbf{0}_{p,q}$ the null matrix in $\mathcal{M}_{p,q}$, $\textbf{0}_{p}:=\textbf{0}_{p,1}$ and $\mathbf{1}_p\in\mathcal{M}_{p,1}$ is the 1-vetor of size $p$.
We denote, for all diagonalizable matrix $A\in\mathcal{M}_p$, by $eig(A)$ the column vector containing all the eigenvalues of $A$ and by $\lambda_i(A)$, $\lambda_{\min}(A)$ and $\lambda_{\max}(A)$, the $i^{th}$, the smallest and the largest eigenvalues of $A$, respectively. For $m,n\in\N \setminus \{0\} $, we write $I=\{1,\ldots,m\}$ and $J=\{m+1,m+n\}$ and for all $x\in \R^{m+n}$, we write $\mathbf{x}_I=(x_i)_{i\in I}$ and $\mathbf{x}_J=(x_j)_{j\in J}$. Throughout this paper, we use the notation 
$$A=\begin{bmatrix}
	A_{II}&A_{IJ}\\
	A_{JI}&A_{JJ}
\end{bmatrix}$$
for $A\in\mathcal{M}_{m+n}$ where $A_{II}=(a_{i,j})_{i,j\in{I}}$, $A_{IJ}=(a_{i,j})_{i\in{I},j\in{J}}$, $A_{JI}=(a_{i,j})_{i\in{J},j\in{I}}$ and $A_{JJ}=(a_{i,j})_{i,j\in{J}}$. 


For all matrices $A_1,\ldots,A_k$, $\text{diag}(A_1,\ldots,A_k)^{\mathsf{T}}$ denotes the block matrix containing the matrices $A_1,\ldots,A_k$ in its diagonal and for all matrix $\textbf{M}$,  $\norm{\textbf{M}}_1:=\max\limits_{j}\displaystyle\sum_i \vert \textbf{M}_{ij}\vert$, $\norm{\textbf{M}}_2:=\sqrt{\max\text{eig}(\textbf{M}^{\mathsf{T}}\textbf{M})}$, with $\textbf{M}^{\mathsf{T}}$ the transpose matrix of $\textbf{M}$ and $\norm{\textbf{M}}_{\mathrm{F}}:=\sqrt{\sum_{ij}\vert \textbf{M}_{ij}\vert^2}$ called the Frobenius norm. In particular, for all vector $\textbf{x}=(x_1,\ldots,x_p)^{\mathsf{T}}\in\R^p$, we have $\norm{\textbf{x}}_1:=\vert {x}_1\vert+\ldots+\vert x_n\vert $ and $\norm{\mathbf{x}}_2=\norm{\mathbf{x}}_{\mathrm{F}}:=\sqrt{ {x}_1^2+\ldots+x_n^2}$.
We denote by $\otimes$ the Kronecker product defined, for all matrices $\mathbf{A},\mathbf{B}$ with suitable dimensions,
$$
\mathbf{A}\otimes\mathbf{B}=\begin{bmatrix}
	a_{11}\mathbf{B}&  \cdots & a_{1q}\mathbf{B}\\
	\vdots&  \ddots &\vdots\\
	a_{p1}\mathbf{B}&  \cdots & a_{pq}\mathbf{B}
\end{bmatrix}
$$
and by $\oplus$ the Kronecker sum defined by $\mathbf{A}\oplus\mathbf{B}=A\otimes\mathbf{I}+\mathbf{I}\otimes B$. We define the vec-operator applied on a matrix $\mathbf{A}$ denoted by $\text{vec}(\mathbf{A})$, which stacks its columns into a column vector, for more details, see, e.g., \cite{Horn matrix} and \cite{matrix cookbook}. Some of associated matrix properties used in this paper are given, for all matrices with suitable dimensions $\mathbf{A},\mathbf{B}, \mathbf{C}$ and $\mathbf{D}$, by
\begin{equation}
	(\mathbf{A}\otimes\mathbf{C})(\mathbf{B}\otimes\mathbf{D})=\mathbf{A}\mathbf{B}\otimes\mathbf{C}\mathbf{D},\quad 
	\text{vec}(\mathbf{A}\mathbf{B}\mathbf{C})=(\mathbf{C}^{\mathsf{T}}\otimes\mathbf{A})\text{vec}(\mathbf{B}),\quad
	e^{\mathbf{A}\oplus\mathbf{B}}=e^{\mathbf{A}}\otimes e^{\mathbf{B}}.
	\label{matrix property}
\end{equation}
We use the notations $H_{\mathbf{x}}(f)$ and $\nabla_{\mathbf{x}}f$ for the Hessian matrix and the gradient column vector of the function $f$ with respect to ${\mathbf{x}}$. we denote by $\mathcal{C}^2(\mathcal{D},\R)$ the set of twice continuously differentiable real-valued functions on $\mathcal{D}$ and by $\mathcal{C}_c^2(\mathcal{D},\R)$ its subset of functions with compact support. We will denote the convergence in probability, in distribution and almost surely  by $\stackrel{\mathbb{P}}\longrightarrow$, $\stackrel{\mathcal{D}}{\longrightarrow}$, $\stackrel{a.s.}{\longrightarrow}$ respectively.
Let $(\Omega,\mathcal{F}, \mathbb{P})$ be a probability space endowed with the augmented filtration $(\mathcal{F}_t)_{t \in \R_{+}}$ corresponding to $(B_t)_{t\in\R_+}$ and a given initial value $Z_0=(Y_0,X_0)^{\mathsf{T}}$ being independent of $(B_t)_{t\in\R_+}$ such that $\mathbb{P}(Y_0\in\R_{++})=1$. We suppose that $(\mathcal{F}_t)_{t\in\R_+}$ satisfies the usual conditions, i.e., the filtration 
$(\mathcal{F}_{t})_{t \in \R_{+}}$ is right-continuous and $\mathcal{F}_{0}$ contains all the $\mathbb{P}$-null sets in $\mathcal{F}$. For all $t\in\R_+$, we use the notation  $\mathcal{F}_t^Y:=\sigma(Y_s;\,0\leq s\leq t)$ for the $\sigma$-algebra generated by $(Y_s)_{s\in[0,t]}$.\\

Note that we can rewrite the model $\eqref{model01}$ using a matrix representation as follows
\begin{equation}
	\mathrm{d}Z_t=\Lambda(Z_t)\tau \mathrm{d}t+\sqrt{Z_t^1}\rho\,\mathrm{d}B_t,\quad  t \in \mathbb{R}_{+},
	\label{model0 Z}
\end{equation} 
where $Z_t=(Z_t^1,\ldots,Z_t^d)^{\mathsf{T}}=(Y_t,X_t^{1},\ldots,X_t^{n})^{\mathsf{T}}$, for all $t\in\R_{+}$, $
\Lambda(Z_t)=\begin{bmatrix}
	\Lambda_1(Z_t)&\mathbf{0}_{1,n(d+1)}\\
	\mathbf{0}_{n,2}&\mathbf{I}_n \otimes K(Z_t)
\end{bmatrix},$
with $\Lambda_1(Z_t)=(1,-Z_t^1)$ and $K(Z_t)=(1,-Z_t^1,\ldots,-Z_t^d)$ and the $d^2+1$-dimensional vector $\tau$ is stacking in turn the unknown drift parameters of $Y,X^{1},\ldots,X^{n}$ into a column vector, namely, $$\tau=\left(a,b,m_1,\kappa_1,\theta_{11},\ldots,\theta_{1n},\ldots,m_n,\kappa_n,\theta_{n1},\ldots,\theta_{nn}\right)^{\mathsf{T}}.$$
\begin{remarque}
Let us recall that for statistical estimations with continuous observations, we always suppose that the diffusion parameter ${\rho}$ is known, see for example the arguments given in \cite[Section 5, page 14]{Dahbi}.
\end{remarque}

\section{CLSE based on continuous time observations}\label{Continuous section}
We consider $(Y_t,X_t)_{t\in\R_+}$ strong solution of the SDE $\eqref{model01}$. The following discussion is about the construction of a CLSE for the drift parameter $\tau$ based on continuous time observation of $(Y_t,X_t)_{t\in[0,T]}$, for some $T\in\R_{++}$. 
At first, we consider the CLSE $\hat{\tau}_{T,N}$ based on the following discretizated process $(Y_{\frac{i}{N}},X_{\frac{i}{N}})_{i\in\lbrace 0,1,\ldots,\lfloor NT \rfloor\rbrace}$, for $N\in\N\setminus\lbrace 0\rbrace$ and it is obtained by solving the extremum problem
{\begin{equation}
	\begin{aligned}
	\hat{\tau}_{T,N}:=\underset{\tau\in \R^{d^2+1}}{\arg\min}\displaystyle\sum_{i=1}^{ \lfloor NT \rfloor}\left[\left(Y_{\frac{i}{N}}-\mathbb{E}\left( Y_{\frac{i}{N}}\vert \mathcal{F}_{\frac{i-1}{N}}\right)\right)^2+\left(X_{\frac{i}{N}}-\mathbb{E}\left( X_{\frac{i}{N}}\vert \mathcal{F}_{\frac{i-1}{N}}\right)\right)^{\mathsf{T}}\left(X_{\frac{i}{N}}-\mathbb{E}\left(X_{\frac{i}{N}}\vert \mathcal{F}_{\frac{i-1}{N}}\right)\right)\right].
	\end{aligned}
\label{extremum problem}
\end{equation}
}
Thanks to Itô's formula applied on the processes $(e^{bt}Y_t)_{t\in[0,T]}$ and $(e^{t\theta}X_t)_{t\in[0,T]}$, we obtain
\begin{equation}
Y_t= e^{-b(t-s)}Y_s+a\displaystyle\int_s^t e^{-b(t-u)}\mathrm{d}u+\rho_{11} \displaystyle\int_s^t e^{-b(t-u)}\sqrt{Y_u}\mathrm{d}B_u^{1}
\end{equation}
and
\begin{equation}
	X_t=e^{-\theta (t-s)}X_s+\displaystyle\int_s^t e^{-\theta(t-u)}(m- Y_u\kappa)\mathrm{d}u+\int_s^t e^{-\theta(t-u)}\sqrt{Y_u}\tilde{\rho}\mathrm{~d}B_u,
	\label{Xt Xs expression}
\end{equation}
for all $s\in[0,t]$. Consequently, we get
$
		\mathbb{E}\left( Y_t\vert \mathcal{F}_s\right)=e^{-b(t-s)}Y_s+a\displaystyle\int_0^{t-s} e^{-bu}\mathrm{d}u
$
and $\mathbb{E}\left( X_t\vert \mathcal{F}_s\right)$ is equal to
\begin{multline*}
		e^{-\theta(t-s)}X_s+ \displaystyle\int_0^{t-s} e^{-\theta u} m \mathrm{d}u-Y_s\displaystyle\int_0^{t-s} e^{-b u}e^{\theta (u-t+s)}\kappa\, \mathrm{d}u-a\displaystyle\int_0^{t-s} \left(\displaystyle\int_0^{u} e^{-b(u-v)}\mathrm{d}v\right)e^{\theta (u-t+s)}\kappa\,\mathrm{d}u.
\end{multline*}
Thus, for all $i\in \N\setminus\{0\}$, we get
$
		\mathbb{E}\left( Y_{\frac{i}{N}}\vert \mathcal{F}_{\frac{i-1}{N}}\right)=e^{-{\frac{b}{N}}}Y_{\frac{i-1}{N}}+a\displaystyle\int_0^{\frac{1}{N}} e^{-bu}\mathrm{d}u
$ and  $\mathbb{E}\left( X_{\frac{i}{N}}\vert \mathcal{F}_{\frac{i-1}{N}}\right)$ is equal to
\begin{equation*}
e^{-{\frac{1}{N}}\theta}X_{\frac{i-1}{N}}+ \displaystyle\int_0^{\frac{1}{N}} e^{-\theta u} m \mathrm{d}u-Y_{\frac{i-1}{N}}\displaystyle\int_0^{\frac{1}{N}}e^{-b u} e^{\theta (u-{\frac{1}{N}})}\kappa \,\mathrm{d}u-a\displaystyle\int_0^{\frac{1}{N}} \left(\displaystyle\int_0^{u} e^{-b(u-v)}\mathrm{d}v\right)e^{\theta (u-{\frac{1}{N}})}\kappa\mathrm{d}u.
\end{equation*}
Consequently, by combining the relation \eqref{extremum problem} with the above explicit expressions of conditional expectations, we get 
\begin{equation*}
	\begin{aligned}
		\hat{\tau}_{T,N}&=\underset{\tau\in \R^{d^2+1}}{\arg\min}\displaystyle\sum_{i=1}^{ \lfloor NT \rfloor}\left[\left(Y_{\frac{i}{N}}-Y_{\frac{i-1}{N}}-\left( \tilde{a}- \tilde{b}Y_{\frac{i-1}{N}} \right)\right)^2+\left(X_{\frac{i}{N}}-X_{\frac{i-1}{N}}-\left( \tilde{m} -Y_{\frac{i-1}{N}} \tilde{\kappa}-\tilde{\theta} X_{\frac{i-1}{N}}  \right)\right)^{\mathsf{T}}\right.\\
		&\qquad \qquad \qquad \quad \left.\times\left(X_{\frac{i}{N}}-X_{\frac{i-1}{N}}-\left( \tilde{m} - Y_{\frac{i-1}{N}}\tilde{\kappa} -\tilde{\theta} X_{\frac{i-1}{N}}  \right)\right)^{ \,}\right],
	\end{aligned}
\end{equation*}
with
$\tilde{a}:=a\displaystyle\int_0^{\frac{1}{N}} e^{-bu}\mathrm{d}u,\quad \tilde{b}=1-e^{-\frac{b}{N}}$, $\tilde{m}:=\displaystyle\int_0^{\frac{1}{N}} e^{-\theta u} m\, \mathrm{d}u-a\displaystyle\int_0^{\frac{1}{N}}\left(\displaystyle\int_0^{u} e^{-b(u-v)}\mathrm{d}v\right) e^{\theta (u-{\frac{1}{N}})}\kappa\,\mathrm{d}u$, $
\tilde{\kappa}:=\displaystyle\int_0^{\frac{1}{N}} e^{-b u}e^{\theta (u-{\frac{1}{N}})}\kappa \,\mathrm{d}u$ and $\tilde{\theta}=I_n-e^{-\frac{1}{N}\theta}$.\\
Let the function $g_N$ defined, for all $N\in\N\setminus\lbrace0\rbrace$, by $g_N(a,b,m,\kappa,\theta)=(\tilde{a},\tilde{b},\tilde{m},\tilde{\kappa},\tilde{\theta})$
 and let us introduce the vector $(\tilde{a}_{T,N},\tilde{b}_{T,N},\tilde{m}_{T,N},\tilde{\kappa}_{T,N},\tilde{\theta}_{T,N})^{\mathsf{T}}$ as the CLSE associated to the parameter vector $(\tilde{a},\tilde{b},\tilde{m},\tilde{\kappa},\tilde{\theta})$.
 Then, it can be observed that, on one hand, we have
\begin{equation}
	(\tilde{a}_{T,N},\tilde{b}_{T,N})^{\mathsf{T}}=\underset{(\tilde{a},\tilde{b})^{\mathsf{T}}\in \R^2}{\arg\min}\displaystyle\sum_{i=1}^{ \lfloor NT \rfloor}\left(Y_{\frac{i}{N}}-Y_{\frac{i-1}{N}}-\left( \tilde{a}- \tilde{b}Y_{\frac{i-1}{N}} \right)\right)^2,
	\label{c d}
\end{equation}
and on the other hand, we have
\begin{equation}
	\begin{aligned}
		\text{vec}([\tilde{m}_{T,N},\tilde{\kappa}_{T,N},\tilde{\theta}_{T,N}]^{\mathsf{T}})=\underset{\text{vec}([\tilde{m},\tilde{\kappa},\tilde{\theta}]^{\mathsf{T}})\in \R^{d^2-1}}{\arg\min}\displaystyle\sum_{i=1}^{ \lfloor NT \rfloor}&\left(X_{\frac{i}{N}}-X_{\frac{i-1}{N}}-\left( \tilde{m} - Y_{\frac{i-1}{N}}\tilde{\kappa} -\tilde{\theta} X_{\frac{i-1}{N}}  \right)\right)^{\mathsf{T}}\\
		&\times\left(X_{\frac{i}{N}}-X_{\frac{i-1}{N}}-\left( \tilde{m} - Y_{\frac{i-1}{N}}\tilde{\kappa} -\tilde{\theta} X_{\frac{i-1}{N}}  \right)\right).
	\end{aligned}
	\label{delta varepsilon zeta}
\end{equation}

Hence, in order solve first the extremum problem $\eqref{c d}$, it is enough to solve the system
\begin{equation}\label{derivative 1 equals zero}
	\begin{cases}
		\displaystyle\sum_{i=1}^{ \lfloor NT \rfloor} Y_{\frac{i}{N}}-Y_{\frac{i-1}{N}}-\left( \tilde{a}- \tilde{b}Y_{\frac{i-1}{N}} \right)=0,\\
		\displaystyle\sum_{i=1}^{ \lfloor NT \rfloor} \left(Y_{\frac{i}{N}}-Y_{\frac{i-1}{N}}-\left( \tilde{a}- \tilde{b}Y_{\frac{i-1}{N}} \right)\right)Y_{\frac{i-1}{N}}=0,
	\end{cases}
\end{equation}
which can be written as follows
$\Gamma^{(1)}_{T,N}\begin{bmatrix}
		\tilde{a}\\\tilde{b}
	\end{bmatrix}=\phi^{(1)}_{T,N},
$
with
\begin{equation*}
	\Gamma^{(1)}_{T,N}=\begin{bmatrix}
		\lfloor NT \rfloor& -\displaystyle\sum_{i=1}^{ \lfloor NT \rfloor}Y_{\frac{i-1}{N}}\\
		-\displaystyle\sum_{i=1}^{ \lfloor NT \rfloor}Y_{\frac{i-1}{N}}& \displaystyle\sum_{i=1}^{ \lfloor NT \rfloor}Y_{\frac{i-1}{N}}^2
	\end{bmatrix}\quad\text{and}\quad \phi^{(1)}_{T,N}=\begin{bmatrix}
		Y_{\frac{\lfloor NT \rfloor}{N}}-Y_{0}\\
		-\displaystyle\sum_{i=1}^{ \lfloor NT \rfloor} \left(Y_{\frac{i}{N}}-Y_{\frac{i-1}{N}}\right)Y_{\frac{i-1}{N}}
	\end{bmatrix}.
\end{equation*}
Then, povided that $\lfloor NT \rfloor \displaystyle\sum_{i=1}^{ \lfloor NT \rfloor}Y_{\frac{i-1}{N}}^2 -\left(\displaystyle\sum_{i=1}^{ \lfloor NT \rfloor}Y_{\frac{i-1}{N}}\right)^2>0$, we deduce that
\begin{equation*}
\begin{bmatrix}
	\tilde{a}_{T,N}\\\tilde{b}_{T,N}
\end{bmatrix}=\left(\Gamma^{(1)}_{T,N}\right)^{-1}\phi^{(1)}_{T,N}.	
\end{equation*}
Next, in order to solve the second extremum problem $\eqref{delta varepsilon zeta}$, it is enough to solve the system
\begin{equation}\label{derivative 2 equals 0}
	\begin{cases}
		\displaystyle\sum_{i=1}^{ \lfloor NT \rfloor} X_{\frac{i}{N}}-X_{\frac{i-1}{N}}-\left( \tilde{m} -Y_{\frac{i-1}{N}} \tilde{\kappa}-\tilde{\theta} X_{\frac{i-1}{N}}  \right) =\mathbf{0}_n\\
		\displaystyle\sum_{i=1}^{ \lfloor NT \rfloor}Y_{\frac{i-1}{N}}\left(X_{\frac{i}{N}}-X_{\frac{i-1}{N}}-\left( \tilde{m} -Y_{\frac{i-1}{N}} \tilde{\kappa}-\tilde{\theta} X_{\frac{i-1}{N}}  \right)\right)=\mathbf{0}_n\\
		\displaystyle\sum_{i=1}^{ \lfloor NT \rfloor}\left( X_{\frac{i}{N}}-X_{\frac{i-1}{N}}-\left( \tilde{m} -Y_{\frac{i-1}{N}} \tilde{\kappa}-\tilde{\theta} X_{\frac{i-1}{N}}  \right)\right)X_{\frac{i-1}{N}}^{\mathsf{T}}=\mathbf{0}_{n\times n}.
	\end{cases}
\end{equation}
The above system can be written in the following matrix form
$
	\Gamma^{(2)}_{T,N}\begin{bmatrix}
		\tilde{m}^{\mathsf{T}}\\
		\tilde{\kappa}^{\mathsf{T}}\\
		\tilde{\theta}^{\mathsf{T}}
	\end{bmatrix} = \phi^{(2)}_{T,N},
$
with
\begin{equation*}
	\Gamma^{(2)}_{T,N}=\begin{bmatrix}
		\lfloor NT \rfloor & -\displaystyle\sum_{i=1}^{ \lfloor NT \rfloor}Y_{\frac{i-1}{N}}  &-\displaystyle\sum_{i=1}^{ \lfloor NT \rfloor}X_{\frac{i-1}{N}}^{\mathsf{T}}\\
		-\displaystyle\sum_{i=1}^{ \lfloor NT \rfloor}Y_{\frac{i-1}{N}} & \displaystyle\sum_{i=1}^{ \lfloor NT \rfloor}Y_{\frac{i-1}{N}}^2 & \displaystyle\sum_{i=1}^{ \lfloor NT \rfloor} Y_{\frac{i-1}{N}}X_{\frac{i-1}{N}}^{\mathsf{T}}\\
		-\displaystyle\sum_{i=1}^{ \lfloor NT \rfloor}X_{\frac{i-1}{N}} &  \displaystyle\sum_{i=1}^{ \lfloor NT \rfloor} Y_{\frac{i-1}{N}}X_{\frac{i-1}{N}} & \displaystyle\sum_{i=1}^{ \lfloor NT \rfloor} X_{\frac{i-1}{N}}X_{\frac{i-1}{N}}^{\mathsf{T}}
	\end{bmatrix}
	\quad\text{and}\quad \phi^{(2)}_{T,N}=\begin{bmatrix}
		X_{\frac{\lfloor NT \rfloor}{N}}^{\mathsf{T}}-X_{0}^{\mathsf{T}}\\
		-\displaystyle\sum_{i=1}^{ \lfloor NT \rfloor} Y_{\frac{i-1}{N}}\left(X_{\frac{i}{N}}^{\mathsf{T}}-X_{\frac{i-1}{N}}^{\mathsf{T}}\right)\\
		-\displaystyle\sum_{i=1}^{ \lfloor NT \rfloor} X_{\frac{i-1}{N}}\left(X_{\frac{i}{N}}^{\mathsf{T}}-X_{\frac{i-1}{N}}^{\mathsf{T}}\right)
	\end{bmatrix}.
\end{equation*}
Then, provided the invertibility of $\Gamma^{(2)}_{T,N}$, we deduce that
\begin{equation*}
	\begin{bmatrix}
		\tilde{m}_{T,N}^{\mathsf{T}}\\
		\tilde{\kappa}_{T,N}^{\mathsf{T}}\\
		\tilde{\theta}_{T,N}^{\mathsf{T}}
	\end{bmatrix}=\left(\Gamma^{(2)}_{T,N}\right)^{-1}\phi^{(2)}_{T,N}.
\end{equation*}
Consequently, it remains to prove the invertibility of the matrices $\Gamma^{(1)}_{T,N}$ and $\Gamma^{(2)}_{T,N}$. In our case, it is enough to show that they are almost surely positive definite. Let $x\in\R^2\setminus\lbrace \mathbf{0}_2 \rbrace$ and $y\in\R^{n+2}\setminus\lbrace \mathbf{0}_{n+2} \rbrace$. From one side, we have
\begin{equation}
	\begin{bmatrix}
		x_1\\
		x_2
	\end{bmatrix}^{\mathsf{T}}\Gamma^{(1)}_{T,N} \begin{bmatrix}
		x_1\\
		x_2
	\end{bmatrix} =\displaystyle\sum_{i=1}^{ \lfloor NT \rfloor}\begin{bmatrix}
		x_1\\
		x_2
	\end{bmatrix}^{\mathsf{T}}
	\begin{bmatrix}
		1\\
		- Y_{\frac{i-1}{N}}
	\end{bmatrix}
	\begin{bmatrix}
		1\\
		-Y_{\frac{i-1}{N}}
	\end{bmatrix}^{\mathsf{T}}
	\begin{bmatrix}
		x_1\\
		x_2
	\end{bmatrix}= \displaystyle\sum_{i=1}^{ \lfloor NT \rfloor} (x_1-x_2 Y_{\frac{i-1}{N}})^2\geq 0.
\label{invertibility 1}
\end{equation}
From the other side,
$	\begin{bmatrix}
		x_1\\
		x_2
	\end{bmatrix}^{\mathsf{T}}\Gamma^{(1)}_{T,N} \begin{bmatrix}
		x_1\\
		x_2
	\end{bmatrix} =0\ \Leftrightarrow \ x_1-x_2 Y_{\frac{i-1}{N}}=0,$ for all $ i\in\lbrace 1,2,\ldots,\lfloor NT\rfloor\rbrace, 
$ which is impossible since $x\neq 0_2$ and, for each $i\in \lbrace 1,2,\ldots,\lfloor NT\rfloor\rbrace $, the distribution of $Y_{\frac{i-1}{N}}$ is absolutely continuous because its conditional distribution given $Y_0$ is absolutely continuous.
Likewise,
\begin{equation}
y^{\mathsf{T}}\Gamma^{(2)}_{T,N} y =\displaystyle\sum_{i=1}^{ \lfloor NT \rfloor}y^{\mathsf{T}}
	\begin{bmatrix}
		1\\
		- Y_{\frac{i-1}{N}}\\
		- X_{\frac{i-1}{N}}
	\end{bmatrix}
	\begin{bmatrix}
		1\\
		-Y_{\frac{i-1}{N}}\\
		- X_{\frac{i-1}{N}}^{\mathsf{T}}
	\end{bmatrix}^{\mathsf{T}}
y= \displaystyle\sum_{i=1}^{ \lfloor NT \rfloor} (y_1-y_2 Y_{\frac{i-1}{N}}-\tilde{y}^{\mathsf{T}}X_{\frac{i-1}{N}})^2\geq 0,
\label{invertibility 2}
\end{equation}
where $\tilde{y}=(y_3,\ldots,y_{n+2})^{\mathsf{T}}$.
In addition,
$
y^{\mathsf{T}}\Gamma^{(2)}_{T,N} y =\mathbf{0}_{n+2}\ \Leftrightarrow\ y_1-y_2 Y_{\frac{i-1}{N}}-\tilde{y}^{\mathsf{T}}X_{\frac{i-1}{N}}=0_{n+2}$, for all  $i\in\lbrace 0,1,\ldots,\lfloor NT\rfloor\rbrace 
$
which is also impossible since $y\neq \mathbf{0}_{n+2}$ and, for each $i\in \lbrace 1,2,\ldots,\lfloor NT\rfloor\rbrace $, the distribution of $(Y_{\frac{i-1}{N}},X_{\frac{i-1}{N}})$ is absolutely continuous because its conditional distribution given $(Y_0,X_0)$ is absolutely continuous (See Theorem \ref{Stationarity theorem} and relation (31) in \cite{Dahbi}).

Now, we define the approximate CLSE of $\tau$ by 
\begin{equation*}
	\hat{\tau}_{T,N}^{\mathrm{approx}}:=N(\tilde{a}_{T,N},\tilde{b}_{T,N},\text{vec}([\tilde{m}_{T,N},\tilde{\kappa}_{T,N},\tilde{\theta}_{T,N}]^{\mathsf{T}}))^{\mathsf{T}}.
\end{equation*}
Thanks to the almost sure continuity of  $(Y_t,X_t)_{t\in \R_{+}}$ and by Propsition $4.44$ in Jacod and Shirayev \cite{Jacod} with the Riemann sequence of adapted subdivisions $(\frac{i}{N}\wedge T)_{i\in\N}$, we obtain
\begin{equation}
	\dfrac{1}{N}\Gamma^{(1)}_{T,N}\stackrel{a.s.}{\longrightarrow}\begin{bmatrix}
		T & -\displaystyle\int_0^T Y_s \mathrm{d}s\\[8pt]
		-\displaystyle\int_0^T Y_s \mathrm{d}s & \displaystyle\int_0^T Y_s^2 \mathrm{d}s
	\end{bmatrix}=:G_T^{(1)},\quad	\phi^{(1)}_{T,N}\stackrel{\mathbb{P}}{\longrightarrow}\begin{bmatrix}
	Y_T-Y_{0}\\
	-\displaystyle\int_0^T Y_s \mathrm{d}Y_s\end{bmatrix}:=f_T^{(1)}
\label{fT1 GT1}
\end{equation}
and
\begin{equation}
	\dfrac{1}{N}\Gamma^{(2)}_{T,N}\stackrel{a.s.}{\longrightarrow}\begin{bmatrix}
		T & -\displaystyle\int_0^T Y_s \mathrm{d}s  &-\displaystyle\int_0^T X_s^{\mathsf{T}} \mathrm{d}s\\[8pt]
		-\displaystyle\int_0^T Y_s \mathrm{d}s & \displaystyle\int_0^T Y_s^2 \mathrm{d}s & \displaystyle\int_0^T Y_s X_s^{\mathsf{T}}  \mathrm{d}s\\[8pt]
		-\displaystyle\int_0^T X_s \mathrm{d}s  &   \displaystyle\int_0^T Y_s X_s \mathrm{d}s  &  \displaystyle\int_0^T X_s X_s^{\mathsf{T}} \mathrm{d}s 
	\end{bmatrix}=:G_T^{(2)},\quad \phi^{(2)}_{T,N}\stackrel{\mathbb{P}}{\longrightarrow}\begin{bmatrix}
	X_{T}^{\mathsf{T}}-X_{0}^{\mathsf{T}}\\[2pt]
	-\displaystyle\int_0^T Y_{s} \mathrm{d}X_{s}^{\mathsf{T}}\\[8pt]
	-\displaystyle\int_0^T X_{s} \mathrm{d}X_{s}^{\mathsf{T}}
\end{bmatrix}:=f_T^{(2)},
\label{fT2 GT2}
\end{equation}
as $N\to\infty$. Hence, provided the invertibility of $G_T^{(1)}$ and $G_T^{(2)}$ and using relations \eqref{fT1 GT1} and \eqref{fT2 GT2}, the Slutsky's theorem yields
\begin{equation}
	\hat{\tau}_{T,N}^{\mathrm{approx}}=N\begin{bmatrix}
		\tilde{a}_{T,N}\\\tilde{b}_{T,N}\\\text{vec}
		\begin{bmatrix}
			\tilde{m}_{T,N}^{\mathsf{T}}\\\tilde{\kappa}_{T,N}^{\mathsf{T}}\\\tilde{\theta}_{T,N}^{\mathsf{T}}
	\end{bmatrix} \end{bmatrix}\stackrel{\mathbb{P}}{\longrightarrow}\begin{bmatrix}
		(G_T^{(1)})^{-1}f_T^{(1)}\\
		\text{vec}\left((G_T^{(2)})^{-1}f_T^{(2)}\right)
	\end{bmatrix}=:\begin{bmatrix}
		\hat{a}_{T}\\\hat{b}_{T}\\
		\text{vec} \begin{bmatrix}
			\hat{m}_{T}^{\mathsf{T}}\\\hat{\kappa}_{T}^{\mathsf{T}}\\\hat{\theta}_{T}^{\mathsf{T}}
	\end{bmatrix} \end{bmatrix}=:\hat{\tau}_T,\quad \text{as } N\to\infty.
	\label{tau approx}
\end{equation}
Hence using the same argument to prove the invertibility of matrices $\Gamma^{(1)}_{T,N}$ and $\Gamma^{(2)}_{T,N}$, by replacing the sum from $0$ to $\lfloor NT \rfloor$ by the integral on $[0,T]$, it is easy to check that $G_T^{(1)}$ and $G_T^{(2)}$ are invertible matrices, see relations \eqref{invertibility 1} and \eqref{invertibility 2} and calculation there.\\ 
Note that this approximate CLSE is constructed through the regular part of a first order Taylor approximation of the function $g_N$ at zero point. In the following lemma we prove that $\hat{\tau}_T$ is also the limit of the CLSE $\hat{\tau}_{T,N}$ introduced in \eqref{extremum problem}. Hence, $\hat{\tau}_T$ is called the CLSE based on continuous time observations.  
\begin{lemme}
	Let us consider the affine model $\eqref{model01}$ with $a\in\R_+$, $b\in \R$, $m\in\R^n$, $\kappa \in \R^n$, $\theta \in \mathcal{M}_n$. Then for each $T\in\R_{++}$, we have $\hat{\tau}_{T,N}\stackrel{\mathbb{P}}{\longrightarrow}\hat{\tau}_T$, as $N\to\infty$. 
	\label{lemma hat tau T N to hat tau}
\end{lemme}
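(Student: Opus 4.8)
The plan is to exploit the explicit reparametrisation set up just before the statement: the CLSE $\hat\tau_{T,N}$ of the original drift vector $\tau=(a,b,m,\kappa,\theta)$ and the CLSE $(\tilde a_{T,N},\tilde b_{T,N},\tilde m_{T,N},\tilde\kappa_{T,N},\tilde\theta_{T,N})$ of the transformed vector are linked through $\hat\tau_{T,N}=g_N^{-1}(\tilde a_{T,N},\tilde b_{T,N},\tilde m_{T,N},\tilde\kappa_{T,N},\tilde\theta_{T,N})$, valid whenever the latter lies in the range of the injective map $g_N$ and $g_N^{-1}$ is defined there. Recalling from \eqref{tau approx} that $\hat\tau_{T,N}^{\mathrm{approx}}=N(\tilde a_{T,N},\tilde b_{T,N},\text{vec}([\tilde m_{T,N},\tilde\kappa_{T,N},\tilde\theta_{T,N}]^{\mathsf{T}}))^{\mathsf{T}}$, we may write, with $G_N:=N g_N$,
\[
\hat\tau_{T,N}=G_N^{-1}\big(\hat\tau_{T,N}^{\mathrm{approx}}\big),
\]
so that the lemma reduces to transferring the already established convergence $\hat\tau_{T,N}^{\mathrm{approx}}\stackrel{\mathbb{P}}{\longrightarrow}\hat\tau_T$ of \eqref{tau approx} through the maps $G_N^{-1}$.

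First I would record the asymptotics of $g_N$. Expanding the elementary integrals and the matrix exponentials defining $\tilde a,\tilde b,\tilde m,\tilde\kappa,\tilde\theta$ in powers of $1/N$ (for instance $N(1-e^{-b/N})\to b$ and $N(\mathbf{I}_n-e^{-\theta/N})\to\theta$), one checks that $G_N(\tau)=N g_N(\tau)\to\tau$ together with $DG_N(\tau)\to\mathbf{I}_{d^2+1}$, both uniformly on every compact subset of the parameter space, and that $G_N(0)=0$. Since the $G_N$ are smooth perturbations of the identity whose Jacobians converge uniformly to $\mathbf{I}_{d^2+1}$, the quantitative inverse function theorem shows that, for $N$ large, each $G_N$ is a diffeomorphism on a fixed neighbourhood of any given compact set, its inverse $G_N^{-1}$ is well defined on a neighbourhood of that set, and $G_N^{-1}\to\mathrm{id}$ uniformly there. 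In particular the representation $\hat\tau_{T,N}=G_N^{-1}(\hat\tau_{T,N}^{\mathrm{approx}})$ is legitimate on an event of probability tending to one, because $\hat\tau_{T,N}^{\mathrm{approx}}$ is tight (being convergent in probability) and lands, for $N$ large, inside the domain of $G_N^{-1}$.

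It then remains to apply a perturbed continuous mapping argument. Setting $w_N:=\hat\tau_{T,N}^{\mathrm{approx}}$ and decomposing $G_N^{-1}(w_N)-\hat\tau_T=[G_N^{-1}(w_N)-w_N]+[w_N-\hat\tau_T]$, the second bracket tends to $0$ in probability by \eqref{tau approx}; for the first, fix $\eps>0$, choose $R$ with $\mathbb{P}(\norm{\hat\tau_T}>R/2)<\eps$, note that $\mathbb{P}(\norm{w_N}>R)<\eps$ for $N$ large, and bound $\norm{G_N^{-1}(w_N)-w_N}\le\sup_{\norm{x}\le R}\norm{G_N^{-1}(x)-x}$ on the event $\{\norm{w_N}\le R\}$, the supremum tending to $0$ by the uniform convergence $G_N^{-1}\to\mathrm{id}$. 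Hence $G_N^{-1}(w_N)\stackrel{\mathbb{P}}{\longrightarrow}\hat\tau_T$, that is $\hat\tau_{T,N}\stackrel{\mathbb{P}}{\longrightarrow}\hat\tau_T$.

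The main obstacle I anticipate is the second step, namely the uniform control of $G_N^{-1}$. While the scalar coordinates $a,b$ invert through elementary expansions (for instance $b=-N\log(1-\tilde b)$), the block carrying $\theta$ must be inverted through the matrix logarithm $\theta=-N\log(\mathbf{I}_n-\tilde\theta)$, which is only defined and smooth for $\norm{\tilde\theta}_2$ small, and the coordinates $m,\kappa$ enter through the coupled expressions of $\tilde m,\tilde\kappa$. Verifying simultaneously the uniform convergence $G_N^{-1}\to\mathrm{id}$ and the well-definedness of this matrix inverse on a fixed neighbourhood is the technical heart of the argument, and is exactly where the tightness of $\hat\tau_{T,N}^{\mathrm{approx}}$, which keeps $\tilde\theta_{T,N}$ of order $1/N$ and hence small, is crucially used.
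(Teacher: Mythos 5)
Your proof is correct and rests on the same two pillars as the paper's own argument: the identity $\hat{\tau}_{T,N}=g_N^{-1}(\tilde{a}_{T,N},\tilde{b}_{T,N},\tilde{m}_{T,N},\tilde{\kappa}_{T,N},\tilde{\theta}_{T,N})$, valid with probability tending to one because the transformed CLSE collapses to the zero point, and the convergence \eqref{tau approx} of the approximate CLSE. Where you diverge is in how the inverse is controlled. The paper writes out each coordinate of $g_N^{-1}$ explicitly ($\hat{b}_{T,N}=-N\log(1-\tilde{b}_{T,N})$, $\hat{\theta}_{T,N}=-N\log(\mathbf{I}_n-\tilde{\theta}_{T,N})$, and the coupled integral expressions for $\hat{a}_{T,N}$, $\hat{\kappa}_{T,N}$, $\hat{m}_{T,N}$) and passes to the limit coordinate by coordinate via first-order Taylor expansions of the scalar and matrix logarithms and the mean value theorem. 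You instead treat $G_N=Ng_N$ as a global perturbation of the identity, check $DG_N\to\mathbf{I}_{d^2+1}$ uniformly on compacts, and combine a quantitative inverse function theorem with the tightness of $\hat{\tau}_{T,N}^{\mathrm{approx}}$ to obtain $G_N^{-1}\to\mathrm{id}$ uniformly on the relevant region, finishing with a perturbed continuous mapping argument. The abstract route avoids the case-by-case expansions and would carry over unchanged to other reparametrisations; its price is that the uniform Jacobian convergence and the injectivity of $G_N$ on a fixed neighbourhood must be verified, which you correctly flag as the technical heart and which, once written out, reduces to essentially the same elementary integral and matrix-exponential estimates that the paper performs directly.
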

\begin{proof}
Thanks to relation \eqref{tau approx}, it is easy to check that  
\begin{equation}
	(\tilde{a}_{T,N},\tilde{b}_{T,N},\tilde{m}_{T,N},\tilde{\kappa}_{T,N},\tilde{\theta}_{T,N})\stackrel{\mathbb{P}}{\longrightarrow} \left(0,0,\mathbf{0}_n,\mathbf{0}_n,\mathbf{0}_{n, n}\right),\quad \text{as } N\to\infty.
	\label{tau hat T N goes to zero}
\end{equation}
Hence, since the function $g_N$ admits an inverse function on the neighbourhood of the zero point, we deduce that $$g_N^{-1}(\tilde{a}_{T,N},\tilde{b}_{T,N},\tilde{m}_{T,N},\tilde{\kappa}_{T,N},\tilde{\theta}_{T,N})=\hat{\tau}_{T,N},$$ with probability tending to one as $N\to \infty$. Namely, we get $$\hat{b}_{T,N}=-N\log(1-\tilde{b}_{T,N}),\quad \hat{a}_{T,N}=\dfrac{\tilde{a}_{T,N}}{\int_0^{\frac{1}{N}}e^{-\hat{b}_{T,N}u}\mathrm{d}u},\quad\hat{\theta}_{T,N}=-N\log(I_n-\tilde{\theta}_{T,N}),$$
	$$\hat{\kappa}_{T,N}=\left(\displaystyle\int_0^{\frac{1}{N}} e^{\hat{\theta}_{T,N} (u-{\frac{1}{N}})} e^{-\hat{b}_{T,N} u}\mathrm{d}u \right)^{-1}\tilde{\kappa}_{T,N}$$ and $$\hat{m}_{T,N}=\left(\displaystyle\int_0^{\frac{1}{N}} e^{-\hat{\theta}_{T,N} u} \mathrm{d}u \right)^{-1}\left(\tilde{m}_{T,N}+\hat{a}_{T,N}\displaystyle\int_0^{\frac{1}{N}} e^{(u-{\frac{1}{N}})\hat{\theta}_{T,N}}\hat{\kappa}_{T,N}\left(\displaystyle\int_0^{u} e^{-\hat{b}_{T,N}(u-v)}\mathrm{d}v\right)\mathrm{d}u\right).$$
The following discussion is similar in spirit to the one used in the proof of 3.3 Lemma in \cite{Boylog}. Using the following first order Taylor approximation at zero point $\ln(1-x)=-x+o(x)$, relations \eqref{tau approx} and \eqref{tau hat T N goes to zero} implies 
	$$\hat{b}_{T,N}=-N\log(1-\tilde{b}_{T,N})=N\tilde{b}_{T,N}+o(\tilde{b}_{T,N})\stackrel{\mathbb{P}}{\longrightarrow}\hat{b}_T,\quad \text{as}\ N\to \infty.$$
	Moreover, by relations \eqref{tau approx} and \eqref{tau hat T N goes to zero}, thank to the following first order Taylor approximation at zero point $\dfrac{1-e^{-x}}{x}=1+o(x)$, we get 
	$$\hat{a}_{T,N}=\dfrac{\tilde{a}_{T,N}}{\int_0^{\frac{1}{N}}e^{-\hat{b}_{T,N}u}\mathrm{d}u}=N\tilde{a}_{T,N}\dfrac{\frac{1}{N}\hat{b}_{T,N}}{1-e^{-\frac{1}{N}\hat{b}_{T,N}}}=\dfrac{N\tilde{a}_{T,N}}{1+o(\frac{1}{N}\hat{b}_{T,N})}\stackrel{\mathbb{P}}{\longrightarrow} \hat{a}_T$$
likewise, thanks to the following matrix version of the first order Taylor approximation at zero point $\ln(\mathbf{I}_n-A)=-A+o(\norm{A}_2)$, we obtain
$$\hat{\theta}_{T,N}=-N\ln(\mathbf{I}_n-\tilde{\theta}_{T,N})=N\tilde{\theta}_{T,N}+o\left(\norm{\tilde{\theta}}_2\right)\stackrel{\mathbb{P}}{\longrightarrow}\hat{\theta}_T,\quad \text{as}\ N\to \infty.$$
In addition, we have
	\begin{equation*}
		\begin{aligned}
			\hat{\kappa}_{T,N}	=\left(N\displaystyle\int_0^{\frac{1}{N}} e^{(\hat{\theta}_{T,N}(u-\frac{1}{N})}e^{-\hat{b}_{T,N} u}\mathrm{d}u \right)^{-1} N\tilde{\kappa}_{T,N}.
		\end{aligned}
	\end{equation*}
On the one hand, by the convergence $\eqref{tau approx}$, we have $N\tilde{\kappa}_{T,N}\stackrel{\mathbb{P}}{\longrightarrow}\hat{\kappa}_T$ as $N\to \infty$. On the other hand, by the mean value theorem, for all $1\leq i,j\leq n$, there exists $0 \leq u^{\star}\leq \dfrac{1}{N}$, such that
\begin{equation*}
	\left(N\displaystyle\int_0^{\frac{1}{N}} e^{\hat{\theta}_{T,N}(u-\frac{1}{N})}e^{-\hat{b}_{T,N} u}\mathrm{d}u \right)_{i,j}=\left(e^{\hat{\theta}_{T,N}(u^{\star}-\frac{1}{N})}\right)_{i,j}e^{-\hat{b}_{T,N} u^{\star}}\stackrel{\mathbb{P}}{\longrightarrow}\delta_{ij},\quad \text{as}\ N\to \infty.
\end{equation*}
Consequently, we obtain $\hat{\kappa}_{T,N}\stackrel{\mathbb{P}}{\longrightarrow}\hat{\kappa}_T$ as $N\to \infty$.
Finally, since we have
	\begin{equation*}
			\hat{m}_{T,N}=\left(N\displaystyle\int_0^{\frac{1}{N}} e^{-\hat{\theta}_{T,N} u} \mathrm{d}u \right)^{-1}\left(N\tilde{m}_{T,N}+N\hat{a}_{T,N}\displaystyle\int_0^{\frac{1}{N}} e^{\hat{\theta}_{T,N} (u-{\frac{1}{N}})}\hat{\kappa}_{T,N}\left(\displaystyle\int_0^{u} e^{-\hat{b}_{T,N}(u-v)}\mathrm{d}v\right)\mathrm{d}u\right),
	\end{equation*}
then it is easy to check using the same arguments that $\hat{m}_{T,N}\stackrel{\mathbb{P}}{\longrightarrow}\hat{m}_T$ which completes the proof.
\end{proof}
In order to study asymptotic properties of the continuous CLSE, we need first to write the relative error term. Using the SDE in \eqref{model01}, we obtain the following equality $f_T^{(1)}=G_T^{(1)}\begin{bmatrix}
		a\\
		b
	\end{bmatrix}+h_T^{(1)},$
with $h_T^{(1)}=\begin{bmatrix}
	\sigma_1\displaystyle\int_0^T\sqrt{Y_s}\mathrm{d}B_s^1\\
	-\sigma_1\displaystyle\int_0^TY_s \sqrt{Y_s}\mathrm{d}B_s^1
\end{bmatrix}$. In a similar way, we obtain
$f_T^{(2)}=G_T^{(2)}\begin{bmatrix}
		m^{\mathsf{T}}\\
		\kappa^{\mathsf{T}}\\
		\theta^{\mathsf{T}}
	\end{bmatrix}+h_T^{(2)}$, where $h_T^{(2)}=\begin{bmatrix}
	\displaystyle\int_0^T\sqrt{Y_s}\mathrm{d}B_s^{\mathsf{T}}\tilde{\rho}^{\mathsf{T}}\\[8pt]
	-\displaystyle\int_0^T Y_s \sqrt{Y_s}\mathrm{d}B_s^{\mathsf{T}}\tilde{\rho}^{\mathsf{T}}\\[8pt]
	-\displaystyle\int_0^T \sqrt{Y_s} X_s \mathrm{d}B_s^{\mathsf{T}}\tilde{\rho}^{\mathsf{T}}
\end{bmatrix}$.
 Therefore, by combining these results with the second property in \eqref{matrix property}, we conclude that 
\begin{equation}
	\hat{\tau}_T-\tau=\begin{bmatrix}
		(G_T^{(1)})^{-1}h_T^{(1)}\\
		\text{vec}\left((G_T^{(2)})^{-1}h_T^{(2)}\right)
	\end{bmatrix}=\begin{bmatrix}
		(G_T^{(1)})^{-1}h_T^{(1)}\\
		(I_{n}\otimes G_T^{(2)})^{-1}\text{vec}(h_T^{(2)})
	\end{bmatrix}=G_T^{-1}h_T,
	\label{err}
\end{equation}
where $G_T:=\begin{bmatrix}
	G_T^{(1)}& \textbf{0}\\
	\textbf{0}& I_{n}\otimes G_T^{(2)}
\end{bmatrix}$ and $h_T:=\begin{bmatrix}
h_T^{(1)}\\
\text{vec}(h_T^{(2)})
\end{bmatrix}$.
\subsection{Subcritical case}
We recall that, based on the classification given in Proposition \ref{classification}, the $\mathit{AD}(1,n)$ model is set to be in the subcritical case if $b\in\R_{++}$ and $\theta$ is a positive definite matrix. In this case, the $\mathit{AD}(1,n)$ process is stationary and exponentially ergodic, see Theorem \ref{Stationarity theorem} and Theorem \ref{ergodicity theorem} in the appendix for more details.   
\subsubsection{Consistency of the CLSE}
\begin{theoreme}\label{consistency CLSE b>0}
	Let us consider the model $\eqref{model01}$ with $a,b\in\R_{++}$, $m,\kappa\in \R^n$ and $\theta \in \mathcal{M}_n$ a positive definite diagonalizable matrix with random initial value $(Y_0,X_0)$ satisfying $\mathbb{P}(Y_0\in\R_{++})=1$. Then the $CLSE$ of $\tau$ is strongly consistent, i.e., $\hat{\tau}_T \stackrel{a.s.}{\longrightarrow} \tau$, as $T\to \infty$. 
\end{theoreme}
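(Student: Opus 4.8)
The plan is to build everything on the error representation \eqref{err}, namely $\hat{\tau}_T-\tau=G_T^{-1}h_T$, by normalizing both factors by $T$ and writing
\begin{equation*}
\hat{\tau}_T-\tau=\para{\tfrac1T G_T}^{-1}\para{\tfrac1T h_T}.
\end{equation*}
The strategy is to prove separately that $\tfrac1T G_T$ converges almost surely to a fixed \emph{invertible} matrix $G_\infty$ and that $\tfrac1T h_T$ converges almost surely to zero. Since matrix inversion is continuous at invertible matrices, these two facts combine to give $\hat{\tau}_T-\tau\stackrel{a.s.}{\longrightarrow}G_\infty^{-1}\cdot\mathbf 0=\mathbf 0$, which is the claim.

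For the denominator I would invoke the exponential ergodicity of the subcritical $\mathit{AD}(1,n)$ process (Theorem \ref{ergodicity theorem}), under which the time averages forming the entries of $G_T^{(1)}$ and $G_T^{(2)}$ converge almost surely to their stationary expectations. Writing $(Y_\infty,X_\infty)$ for a random vector with the stationary law, this gives $\tfrac1T G_T\stackrel{a.s.}{\longrightarrow}G_\infty:=\begin{bmatrix}G_\infty^{(1)}&\mathbf 0\\\mathbf 0& I_n\otimes G_\infty^{(2)}\end{bmatrix}$ with
\begin{equation*}
G_\infty^{(1)}=\begin{bmatrix}1&-\E Y_\infty\\-\E Y_\infty&\E Y_\infty^2\end{bmatrix},\qquad
G_\infty^{(2)}=\begin{bmatrix}1&-\E Y_\infty&-\E X_\infty^{\mathsf T}\\-\E Y_\infty&\E Y_\infty^2&\E(Y_\infty X_\infty^{\mathsf T})\\-\E X_\infty&\E(Y_\infty X_\infty)&\E(X_\infty X_\infty^{\mathsf T})\end{bmatrix}.
\end{equation*}
The invertibility of $G_\infty$ follows by repeating verbatim the positive-definiteness argument used for $\Gamma^{(1)}_{T,N}$ and $\Gamma^{(2)}_{T,N}$ in \eqref{invertibility 1}--\eqref{invertibility 2}, with the sums replaced by stationary expectations: for instance $x^{\mathsf T}G_\infty^{(1)}x=\E[(x_1-x_2Y_\infty)^2]\geq0$ vanishes only if $x_1-x_2Y_\infty=0$ almost surely, which is impossible for $x\neq\mathbf 0_2$ since the stationary law of $Y_\infty$ is non-degenerate (Theorem \ref{Stationarity theorem}); the same computation handles $G_\infty^{(2)}$ using the non-degeneracy of $(Y_\infty,X_\infty)$.

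For the numerator I would observe that every scalar entry of $h_T^{(1)}$ and of $\text{vec}(h_T^{(2)})$ is a continuous local martingale of the form $M_T=\int_0^T g(Y_s,X_s)\,\mathrm dB^{j}_s$ with quadratic variation $\langle M\rangle_T=\int_0^T g(Y_s,X_s)^2\,\mathrm ds$. The ergodic theorem gives $\tfrac1T\langle M\rangle_T\stackrel{a.s.}{\longrightarrow}\E[g(Y_\infty,X_\infty)^2]<\infty$, so $\langle M\rangle_T$ grows linearly; the strong law of large numbers for continuous local martingales then yields $M_T/\langle M\rangle_T\stackrel{a.s.}{\longrightarrow}0$ on $\{\langle M\rangle_\infty=\infty\}$, whence $\tfrac1T M_T=\tfrac{M_T}{\langle M\rangle_T}\cdot\tfrac{\langle M\rangle_T}{T}\stackrel{a.s.}{\longrightarrow}0$. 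Applied coordinatewise this gives $\tfrac1T h_T\stackrel{a.s.}{\longrightarrow}\mathbf 0$, completing the argument once combined with the convergence of $(\tfrac1T G_T)^{-1}$ to $G_\infty^{-1}$.

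The main obstacle is the verification that the stationary moments appearing above are finite, which is exactly what makes all the ergodic limits well defined. The heaviest quadratic variations are that of the second entry of $h_T^{(1)}$ (and the second block of $h_T^{(2)}$), equal to $\sigma_1^2\int_0^TY_s^3\,\mathrm ds$, and that of the third block of $h_T^{(2)}$, which involves $\int_0^TY_s\,X_sX_s^{\mathsf T}\,\mathrm ds$; these require $\E Y_\infty^3<\infty$ and $\E(Y_\infty\norm{X_\infty}_2^2)<\infty$. I would draw these bounds from the moment estimates for the $\mathit{AD}(1,n)$ model collected in the appendix, where the subcriticality $b\in\R_{++}$ together with the positive definiteness of $\theta$ ensures that all polynomial stationary moments of $Y_\infty$ and the required mixed moments of $(Y_\infty,X_\infty)$ are finite.
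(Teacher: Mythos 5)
Your proposal is correct and follows essentially the same route as the paper's proof: the decomposition $\hat{\tau}_T-\tau=(T^{-1}G_T)^{-1}(T^{-1}h_T)$, almost sure convergence of $T^{-1}G_T$ to the (finite, positive definite, hence invertible) stationary expectation via the ergodic theorem, and $T^{-1}h_T\to 0$ via the strong law of large numbers for continuous local martingales after checking that the quadratic variations grow linearly thanks to the stationary moment bounds from the appendix. The only cosmetic difference is that you place the expectations inside the limit matrix while the paper writes the limit as $\mathbb{E}(G_\infty)$ for a random matrix $G_\infty$.
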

\begin{proof}
	By relation$\eqref{err}$, we have
$
		\hat{\tau}_T-\tau=(T^{-1} G_T)^{-1}(T^{-1}h_T)
$. Furthermore,	using Theorem \ref{ergodicity theorem}, we get
	\begin{equation*}
		T^{-1}G_T \stackrel{a.s.}{\longrightarrow} \mathbb{E}(G_{\infty}),\quad \text{as } T\to \infty,
	\end{equation*}
	where 
	\begin{equation}
		G_{\infty}:=\begin{bmatrix}
			G_{\infty}^{(1)}& \textbf{0}\\
			\textbf{0}& I_n\otimes G_{\infty}^{(2)}
		\end{bmatrix}
		\label{Ginfty}
	\end{equation}
	with
	\begin{equation*}
		G_{\infty}^{(1)}:=\begin{bmatrix}
			1 & - Y_{\infty} \\
			- Y_{\infty} & Y_{\infty}^2
		\end{bmatrix}\quad\text{and}\quad G_{\infty}^{(2)}:= \begin{bmatrix}
			1 & - Y_{\infty}  &- X_{\infty}^{\mathsf{T}} \\
			- Y_{\infty} & -Y_{\infty}^2 & - Y_{\infty} X_{\infty}^{\mathsf{T}}\\
			- X_{\infty} & Y_{\infty} X_{\infty}  &   X_{\infty} X_{\infty}^{\mathsf{T}}  
		\end{bmatrix},
	\end{equation*}
	where $Y_\infty$ and $X_\infty$ are defined by the Fourier-Laplace transform given in Theorem \ref{Stationarity theorem}. It is worth to note that $\mathbb{E}(G_{\infty})$ is finite by Remark \ref{Remark moment à l'infini}. In addition, $\mathbb{E}(G_{\infty})$ is an invertible matrix since it is positive definite matrix. In fact, for all $y\in\R^{n+2}\setminus\lbrace0\rbrace$,
	$	\begin{bmatrix}
			y_1\\ y_2
		\end{bmatrix}^{\mathsf{T}} \mathbb{E}(G_\infty^{(1)})\begin{bmatrix}
			y_1\\ y_2
		\end{bmatrix}=\mathbb{E}((y_1-y_2Y_\infty)^2)>0$
	and
$		\begin{bmatrix}
			y_1\\ y_2\\ \tilde{y}
		\end{bmatrix}^{\mathsf{T}} \mathbb{E}(G_\infty^{(2)})\begin{bmatrix}
			y_1\\ y_2\\ \tilde{y}
		\end{bmatrix}=\mathbb{E}((y_1-y_2Y_\infty-\tilde{y}^{\mathsf{T}} X_\infty)^2)>0,$
since $(Y_\infty,X_\infty)$ is absolutely continuous, hence we obtain $y_1-y_2Y_\infty\neq 0$ and $y_1-y_2Y_\infty-\tilde{y}^{\mathsf{T}} X_\infty\neq 0$ with probability $1$, where $\tilde{y}=(y_3,\ldots,y_{n+2})^{\mathsf{T}}$. Hence, we deduce that
	\begin{equation*}
		(T^{-1}G_T)^{-1}\stackrel{a.s.}{\longrightarrow} \begin{bmatrix}
			(\mathbb{E}(G_{\infty}^{(1)}))^{-1}& \textbf{0}\\
			\textbf{0}& I_n\otimes (\mathbb{E}(G_{\infty}^{(2)}))^{-1}
		\end{bmatrix}=(\mathbb{E}(G_\infty))^{-1},\quad \text{as } T\to \infty.
	\end{equation*}
	In order to finish the proof, it is enough to check that
$T^{-1}h_T \stackrel{a.s.}{\longrightarrow} {0},$ as $T\to \infty$.
	Firstly, for all $T\in\R_{+}$, let $(h_{T,1}^{(2)},h_{T,2}^{(2)},\ldots,h_{T,n}^{(2)}):=h_T^{(2)}$, which means that $h_{T,i}^{(2)}$ would be the $i^{\text{th}}$ column of $h_T^{(2)}$ of size $n+2$, for all $1\leq i\leq n$. Consequently, for all $1\leq i,j\leq n$,  $\langle h^{(1)}_T,\left.h^{(1)}_T\right.^{\mathsf{T}}\rangle$, $\langle h_T^{(1)},\left.h^{(2)}_{T,i}\right.^{\mathsf{T}}\rangle$ and $\langle h^{(2)}_{T,i},\left.h^{(2)}_{T,j}\right.^{\mathsf{T}}\rangle$ are given, respectively, by
	$\mathrm{H}_T^{(1)}=\sigma_1^2\displaystyle\int_{0}^T\begin{bmatrix}
		Y_s&-Y_s^2\\-Y_s^2&Y_s^3
	\end{bmatrix}\mathrm{d}s$, $\mathrm{H}_T^{(2)}=\sigma_1 \tilde{\rho}_{i1}\displaystyle\int_{0}^T\begin{bmatrix}
	Y_s&-Y_s^2&-Y_s X_s^{\mathsf{T}}\\-Y_s^2&Y_s^3&Y_s^2X_s^\mathsf{T}
\end{bmatrix}\mathrm{d}s$ and
$\mathrm{H}_T^{(3)}=\left(\tilde{\rho}\tilde{\rho}^{\mathsf{T}}\right)_{ij}\displaystyle\int_{0}^T\begin{bmatrix}
Y_s&-Y_s^2&-Y_s X_s^{\mathsf{T}}\\
-Y_s^2&Y_s^3&Y_s^2X_s^{\mathsf{T}}\\
-Y_s X_s&Y_s^2 X_s& Y_s X_s X_s^{\mathsf{T}}
\end{bmatrix}\mathrm{d}s.$ Secondly, by Remark \ref{Remark moment à l'infini}  and Theorem \ref{ergodicity theorem}, the quadratic variation $\mathrm{H}_T$ of $h_T$ satisfies
\begin{equation*}
\dfrac{1}{T}\mathrm{H}_T\stackrel{a.s.}{\longrightarrow} \mathbb{E}(\mathrm{H}_{\infty}),\quad\text{as }T\to\infty,
\end{equation*}  where
	\begin{equation}
		\mathrm{H}_\infty=\begin{bmatrix}
			\sigma_1^2\mathrm{H}_\infty^{(1)}&\sigma_1(\rho_{J1}\otimes\mathrm{H}_\infty^{(2)})\\\sigma_1(\rho_{J1}\otimes\mathrm{H}_\infty^{(2)})^{\mathsf{T}}&\tilde{\rho}\tilde{\rho}^{\mathsf{T}}\otimes\mathrm{H}_\infty^{(3)}
		\end{bmatrix},
		\label{quadratic variation of h at infinty}
	\end{equation}
	with $\mathrm{H}_\infty^{(1)}=\begin{bmatrix}
		Y_\infty&-Y_\infty^2\\-Y_\infty^2&Y_\infty^3
	\end{bmatrix}$, $\mathrm{H}_\infty^{(2)}=\begin{bmatrix}
		Y_\infty&-Y_\infty^2&-Y_\infty X_\infty^{\mathsf{T}}\\-Y_\infty^2&Y_\infty^3&Y_\infty^2X_\infty^\mathsf{T}
	\end{bmatrix}$ and
	$\mathrm{H}_\infty^{(3)}=\begin{bmatrix}
		Y_\infty&-Y_\infty^2&-Y_\infty X_\infty^{\mathsf{T}}\\
		-Y_\infty^2&Y_\infty^3&Y_\infty^2X_\infty^{\mathsf{T}}\\
		-Y_\infty X_\infty&Y_\infty^2 X_\infty& Y_\infty X_\infty X_\infty^{\mathsf{T}}
	\end{bmatrix}.$
Finally, the strong law of large numbers for continuous local martingales given by Theorem \ref{LFGN} applied on $h_T$ completes the proof.
\end{proof}
\subsubsection{Asymptotic behavior of the CLSE}
\begin{theoreme}\label{Asymptotic Normality CLSE b>0 Continuous}
	Le us consider the affine diffusion model $\eqref{model01}$ with $a,b\in\R_{++}$ $m,\kappa\in\R^n$, $\theta\in\mathcal{M}_n$ a positive definite diagonalizable matrix and the random initial value $(Y_0,X_0)$ independent of $(B_t)_{t\in\R_+}$ satisfying $\mathbb{P}(Y_0\in\R_{++})=1$. Then  the CLSE of $\tau$ is asymptotically normal, namely, 
	\begin{equation*}
		\sqrt{T}(\hat{\tau}_T-\tau)\stackrel{\mathcal{D}}{\longrightarrow}\mathcal{N}_{d^2+1}({0},[\mathbb{E}(G_\infty)]^{-1}\mathbb{E}(\mathrm{H}_\infty)[\mathbb{E}(G_\infty)]^{-1})
	\end{equation*}
	where $G_{\infty}$ and $\mathrm{H}_{\infty}$ are given by relations $\eqref{Ginfty}$ and \eqref{quadratic variation of h at infinty}, respectively.
\end{theoreme}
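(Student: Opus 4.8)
The plan is to start from the exact error representation \eqref{err}, namely $\hat{\tau}_T - \tau = G_T^{-1} h_T$, and to insert the normalization $\sqrt{T}$ by writing
\begin{equation*}
\sqrt{T}(\hat{\tau}_T - \tau) = \left(\tfrac{1}{T} G_T\right)^{-1}\left(\tfrac{1}{\sqrt{T}} h_T\right).
\end{equation*}
The first factor has already been controlled inside the proof of Theorem \ref{consistency CLSE b>0}: by the ergodic theorem (Theorem \ref{ergodicity theorem}) we have $T^{-1} G_T \stackrel{a.s.}{\longrightarrow} \E(G_\infty)$, and since $\E(G_\infty)$ is positive definite (hence invertible and symmetric), continuity of matrix inversion gives $(T^{-1}G_T)^{-1}\stackrel{a.s.}{\longrightarrow}(\E(G_\infty))^{-1}$. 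Thus all the asymptotic randomness is carried by the normalized martingale $T^{-1/2} h_T$, and the theorem will follow once this term is shown to be asymptotically Gaussian.

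The core of the proof is therefore to establish
\begin{equation*}
\tfrac{1}{\sqrt{T}} h_T \stackrel{\mathcal{D}}{\longrightarrow} \mathcal{N}_{d^2+1}(0, \E(\mathrm{H}_\infty)), \quad \text{as } T\to\infty.
\end{equation*}
Since $h_T$ is a continuous local martingale (a vector of stochastic integrals against $B$) with quadratic variation process $\mathrm{H}_T$, I would invoke a multivariate central limit theorem for continuous local martingales, playing here the role that the strong law (Theorem \ref{LFGN}) played in the consistency proof. It is essential to treat the full vector $h_T$ jointly rather than its two blocks $h_T^{(1)}$ and $\text{vec}(h_T^{(2)})$ separately, because both are driven by the common Brownian component $B^1$ and are therefore correlated; this correlation is exactly the off-diagonal block $\sigma_1(\rho_{J1}\otimes\mathrm{H}_\infty^{(2)})$ appearing in \eqref{quadratic variation of h at infinty}, and the joint CLT accounts for it automatically through the single matrix $\mathrm{H}_T$. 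The one nontrivial hypothesis of this CLT is the convergence of the normalized quadratic variation, which was already verified in the consistency proof: by Theorem \ref{ergodicity theorem} together with the moment bounds of Remark \ref{Remark moment à l'infini}, $T^{-1}\mathrm{H}_T \stackrel{a.s.}{\longrightarrow} \E(\mathrm{H}_\infty)$. Crucially, because the $\mathit{AD}(1,n)$ process is ergodic in the subcritical regime, this limit is the \emph{deterministic} matrix $\E(\mathrm{H}_\infty)$, which is what upgrades the conclusion from mixed normality to a genuine Gaussian limit with constant covariance.

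Finally, combining the two pieces by Slutsky's theorem yields
\begin{equation*}
\sqrt{T}(\hat{\tau}_T - \tau) \stackrel{\mathcal{D}}{\longrightarrow} (\E(G_\infty))^{-1}\,\mathcal{N}_{d^2+1}(0,\E(\mathrm{H}_\infty)) = \mathcal{N}_{d^2+1}\!\left(0,\ (\E(G_\infty))^{-1}\E(\mathrm{H}_\infty)(\E(G_\infty))^{-1}\right),
\end{equation*}
where the last identity uses the symmetry of $(\E(G_\infty))^{-1}$. The main obstacle is the verification of the martingale CLT hypotheses, but the continuity of $h_T$ removes any Lindeberg/jump condition, so this genuinely reduces to the quadratic-variation convergence already established. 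The only remaining care is to confirm that $\E(\mathrm{H}_\infty)$ is finite — which requires the existence of the third-order moment of $Y_\infty$ and of the cross moments such as $\E(Y_\infty X_\infty X_\infty^{\mathsf{T}})$ — and this is precisely what Remark \ref{Remark moment à l'infini} guarantees in the subcritical case.
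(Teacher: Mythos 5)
Your proposal is correct and follows essentially the same route as the paper: the decomposition $\sqrt{T}(\hat{\tau}_T-\tau)=(T^{-1}G_T)^{-1}(T^{-1/2}h_T)$, the ergodic theorem for the first factor, the multivariate martingale CLT (Theorem \ref{CLT Van Zanten}) applied to the full vector $h_T$ using the a.s.\ convergence $T^{-1}\mathrm{H}_T\to\mathbb{E}(\mathrm{H}_\infty)$ already established in the consistency proof, and Slutsky's lemma to conclude. Your additional remarks on treating the two blocks jointly and on the finiteness of $\mathbb{E}(\mathrm{H}_\infty)$ via Remark \ref{Remark moment à l'infini} are consistent with, and slightly more explicit than, the paper's argument.
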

\begin{proof}
 We have, for all $T\in\R_+$,
	\begin{equation}
		\sqrt{T}(\hat{\tau}_T-\tau)=\left(\dfrac{1}{T}G_T\right)^{-1}\left(\dfrac{1}{\sqrt{T}}h_T\right).
		\label{proof normality}
	\end{equation}
	On one side, by Remark \ref{Remark moment à l'infini}, the ergodicity theorem implies that $\left(\dfrac{1}{T}G_T\right)^{-1}\stackrel{a.s.}{\longrightarrow}[\mathbb{E}(G_\infty)]^{-1}$, as $T\to\infty$. On the other side, by the central limit theorem given by Theorem \ref{CLT Van Zanten}, we obtain
	\begin{equation*}
		\dfrac{1}{\sqrt{T}}h_T \stackrel{\mathcal{D}}{\longrightarrow}\mathcal{N}_{d^2+1}(0,\mathbb{E}(\mathrm{H}_{\infty})),\quad\text{as }T\to\infty.
	\end{equation*}	
	Finally, thanks to Slutsky's lemma and to the convergence $\eqref{proof normality}$, the proof is completed \end{proof}
\subsection{A special critical case} \label{A special critical case continuous observations}
We recall that, based on the classification given in Proposition \ref{classification}, the $\mathit{AD}(1,n)$ model is set to be in the critical case if $b=0$ and $\lambda_{\min}(\theta)\in\R_{++}$ or if $\theta=\mathbf{0}_{n,n}$ and $b\in\R_{+}$. In the following, we consider the special critical case $b=0$ and $\theta=\mathbf{0}_{n,n}$ where we add the assuption $\kappa=\mathbf{0}_n$ which will be necessary in order to use scaling properties introduced by Barczy et al. in \cite{Barczy critical case}. 
\begin{theoreme}
	Let us consider the model $\eqref{model01}$ with $a\in\R_{+}$, $b=0$, $m\in \R^n$, $\kappa=\mathbf{0}_n$ and $\theta=\mathbf{0}_{n,n}$ with random initial value $(Y_0,X_0)^{\mathsf{T}}$ independent of $(B_t)_{t\in\R_+}$ satisfying $\mathbb{P}(Y_0\in\R_{++})=1$. Then 
	{\begin{equation*}
		\begin{bmatrix}
			\hat{a}_T-a\\
			T \hat{b}_T\\
   \text{vec}\begin{bmatrix}
				\hat{m}_T^{\mathsf{T}}-m^{\mathsf{T}}\\
				T \hat{\kappa}_T^{\mathsf{T}}\\
				T \hat{\theta}_T^{\mathsf{T}}
			\end{bmatrix}
		\end{bmatrix}\stackrel{\mathcal{D}}{\longrightarrow}
		\text{diag}\left(\mathit{U}_1^{-1},I_n\otimes\mathit{U}_2^{-1}\right)(\mathit{R}_1,\text{vec}(R_2))^{\mathsf{T}},\quad\text{as }T\to\infty,
\end{equation*}}
with $\mathit{U}_1=\begin{bmatrix}
			1& -\displaystyle\int_0^1 \mathcal{Y}_s \mathrm{d}s\\[8pt]
			-\displaystyle\int_0^1 \mathcal{Y}_s \mathrm{d}s & \displaystyle\int_0^1 \mathcal{Y}_s^2 \mathrm{d}s
		\end{bmatrix}$, $\mathit{U}_2=\begin{bmatrix}
				1 & -\displaystyle\int_0^1 \mathcal{Y}_s \mathrm{d}s  &-\displaystyle\int_0^1 \mathcal{X}_s^{\mathsf{T}} \mathrm{d}s\\[8pt]
				-\displaystyle\int_0^1 \mathcal{Y}_s \mathrm{d}s & \displaystyle\int_0^1 \mathcal{Y}_s^2 \mathrm{d}s & \displaystyle\int_0^1 \mathcal{Y}_s \mathcal{X}_s^{\mathsf{T}}  \mathrm{d}s\\[8pt]
				-\displaystyle\int_0^1 \mathcal{X}_s \mathrm{d}s  &   \displaystyle\int_0^1 \mathcal{Y}_s \mathcal{X}_s \mathrm{d}s  &  \displaystyle\int_0^1 \mathcal{X}_s \mathcal{X}_s^{\mathsf{T}} \mathrm{d}s 
			\end{bmatrix}$,\\
   $\mathit{R}_1=\begin{bmatrix}
			\mathcal{Y}_1 -a\\
a\displaystyle\int_0^1 \mathcal{Y}_s\mathrm{d}s-\displaystyle\int_0^1 \mathcal{Y}_s\mathrm{~d}\mathcal{Y}_s
		\end{bmatrix}$
and
	$\mathit{R}_2=\begin{bmatrix}
				\mathcal{X}_1^{\mathsf{T}}-m^{\mathsf{T}}\\
				\displaystyle\int_0^1 \mathcal{Y}_s\mathrm{~d}s\,m^{\mathsf{T}}-\displaystyle\int_0^1 \mathcal{Y}_s\mathrm{~d}\mathcal{X}_s^{\mathsf{T}}\\
				\displaystyle\int_0^1 \mathcal{X}_s\mathrm{~d}s\,m^{\mathsf{T}}-\displaystyle\int_0^1 \mathcal{X}_s\mathrm{~d}\mathcal{X}_s^{\mathsf{T}}
			\end{bmatrix}$,
			where $(\mathcal{Y}_t,\mathcal{X}_t)_{t\in\R_+}$ is the unique strong solution of 
	\begin{equation}
		\begin{cases}
			\mathrm{d}\mathcal{Y}_t=a \mathrm{~d}t+\rho_{11}\sqrt{\mathcal{Y}_t}\mathrm{~d}B_t^1\\
			\mathrm{d}\mathcal{X}_t=m \mathrm{~d}t +\sqrt{\mathcal{Y}_t}\tilde{\rho} \mathrm{~d}B_t,		
		\end{cases}\quad t\in\R_{+},\quad(\mathcal{Y}_0,\mathcal{X}_0)=(0,\mathbf{0}_n).
	\label{mathcal Y and X}
	\end{equation} 
\label{Théorème SCC continuous observations}
\end{theoreme}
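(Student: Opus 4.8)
The plan is to exploit the self-similarity of the critical $\mathit{AD}(1,n)$ dynamics and reduce everything to the error representation \eqref{err}, $\hat\tau_T-\tau=G_T^{-1}h_T$, through an \emph{anisotropic} rescaling. First I would introduce the rescaled objects $\mathcal{Y}^{(T)}_t:=T^{-1}Y_{Tt}$, $\mathcal{X}^{(T)}_t:=T^{-1}X_{Tt}$ and $\tilde B^{(T)}_t:=T^{-1/2}B_{Tt}$ for $t\in[0,1]$. Using \eqref{model01} with $b=0$, $\kappa=\mathbf{0}_n$, $\theta=\mathbf{0}_{n,n}$ together with the Brownian scaling of $\tilde B^{(T)}$, the substitution $s=Tu$ shows that $(\mathcal{Y}^{(T)},\mathcal{X}^{(T)})$ solves exactly the system \eqref{mathcal Y and X} driven by $\tilde B^{(T)}$, but started from $(T^{-1}Y_0,T^{-1}X_0)$. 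Since this initial value tends to $(0,\mathbf 0_n)$ as $T\to\infty$, the scaling results of Barczy et al. \cite{Barczy critical case} (whose applicability is exactly what forces the assumption $\kappa=\mathbf 0_n$, so that $Y$ and $X$ share the common spatial scale $T$) yield the functional weak convergence $(\mathcal{Y}^{(T)},\mathcal{X}^{(T)})\stackrel{\mathcal D}{\longrightarrow}(\mathcal{Y},\mathcal{X})$ on $[0,1]$.

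Next I would record how the blocks of $G_T$ and $h_T$ transform under $s=Tu$. Writing $\mathbf{D}_T^{(1)}:=\mathrm{diag}(T^{1/2},T^{3/2})$ and $\mathbf{D}_T^{(2)}:=\mathrm{diag}(T^{1/2},T^{3/2},T^{3/2}\mathbf I_n)$, the identities $\int_0^T Y_s\,\mathrm ds=T^2\int_0^1\mathcal Y^{(T)}_u\,\mathrm du$, $\int_0^T Y_s^2\,\mathrm ds=T^3\int_0^1(\mathcal Y^{(T)}_u)^2\,\mathrm du$ and their $X$-analogues give $(\mathbf D_T^{(1)})^{-1}G_T^{(1)}(\mathbf D_T^{(1)})^{-1}=U_1^{(T)}$ and $(\mathbf D_T^{(2)})^{-1}G_T^{(2)}(\mathbf D_T^{(2)})^{-1}=U_2^{(T)}$, where $U_1^{(T)},U_2^{(T)}$ are the matrices $U_1,U_2$ with $(\mathcal Y,\mathcal X)$ replaced by $(\mathcal Y^{(T)},\mathcal X^{(T)})$. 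Likewise, with $\tilde{\mathbf D}_T^{(1)}:=\mathrm{diag}(T,T^2)$ and $\tilde{\mathbf D}_T^{(2)}:=\mathrm{diag}(T,T^2,T^2\mathbf I_n)$, the martingale blocks satisfy $(\tilde{\mathbf D}_T^{(1)})^{-1}h_T^{(1)}=R_1^{(T)}$ and $(\tilde{\mathbf D}_T^{(2)})^{-1}h_T^{(2)}=R_2^{(T)}$, with $R_1^{(T)},R_2^{(T)}$ the counterparts of $R_1,R_2$ built from the scaled process and $\tilde B^{(T)}$.

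Granting the joint convergence $(U_1^{(T)},U_2^{(T)},R_1^{(T)},R_2^{(T)})\stackrel{\mathcal D}{\longrightarrow}(U_1,U_2,R_1,R_2)$, I would combine the two scalings algebraically. A short computation shows that the scalar factors cancel, so that $\mathrm{diag}(1,T)(G_T^{(1)})^{-1}\tilde{\mathbf D}_T^{(1)}=\big[(\mathbf D_T^{(1)})^{-1}G_T^{(1)}(\mathbf D_T^{(1)})^{-1}\big]^{-1}=(U_1^{(T)})^{-1}$, whence $\mathrm{diag}(1,T)(\hat a_T-a,\hat b_T)^{\mathsf T}=(U_1^{(T)})^{-1}R_1^{(T)}$, and the analogous identity with $\mathrm{diag}(1,T,T\mathbf I_n)$ gives the matrix $(\hat m_T^{\mathsf T}-m^{\mathsf T},\,T\hat\kappa_T^{\mathsf T},\,T\hat\theta_T^{\mathsf T})^{\mathsf T}=(U_2^{(T)})^{-1}R_2^{(T)}$. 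The limiting matrices $U_1,U_2$ are almost surely invertible by the positive-definiteness argument already used in \eqref{invertibility 1}--\eqref{invertibility 2}, so matrix inversion and multiplication are continuous at the limit; passing to the limit via the continuous mapping theorem, together with the vec-identity $\mathrm{vec}(U_2^{-1}R_2)=(\mathbf I_n\otimes U_2^{-1})\mathrm{vec}(R_2)$, delivers the claimed joint convergence to $\mathrm{diag}(U_1^{-1},\mathbf I_n\otimes U_2^{-1})(R_1,\mathrm{vec}(R_2))^{\mathsf T}$.

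The main obstacle is the weak convergence of the martingale blocks $R_1^{(T)},R_2^{(T)}$, since these contain genuine stochastic integrals against $\tilde B^{(T)}$. The scalar entries can be turned into continuous path functionals by It\^o's formula and then handled by the continuous mapping theorem; for instance $\rho_{11}\int_0^1(\mathcal Y^{(T)}_s)^{3/2}\,\mathrm d\tilde B^{1,(T)}_s=\tfrac12(\mathcal Y^{(T)}_1)^2-\tfrac12(\mathcal Y^{(T)}_0)^2-\big(a+\tfrac{\rho_{11}^2}{2}\big)\int_0^1\mathcal Y^{(T)}_s\,\mathrm ds$, where the initial term $\tfrac12(\mathcal Y^{(T)}_0)^2\to0$. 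However, the third block of $R_2^{(T)}$ reduces, via $\sqrt{\mathcal Y}\,\tilde\rho\,\mathrm dB=\mathrm d\mathcal X-m\,\mathrm dt$, to $\int_0^1\mathcal X^{(T)}_s\,\mathrm d\mathcal X^{(T)\mathsf T}_s$, whose antisymmetric part is a true stochastic-area term that is \emph{not} a continuous functional of the path; its convergence must be obtained from a stability theorem for stochastic integrals under weak convergence (the joint convergence of integrand and integrator supplied by the scaling framework of \cite{Barczy critical case}, in the spirit of \cite{Boylog}). This is the technical heart of the argument and the reason the scaling machinery, rather than a bare continuous mapping theorem, is required.
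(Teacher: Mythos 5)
Your proposal takes a genuinely different route from the paper's. The paper never establishes functional weak convergence of the rescaled original process $(T^{-1}Y_{Tt},T^{-1}X_{Tt})_{t\in[0,1]}$. Instead it introduces the auxiliary process $(\mathcal Y,\mathcal X)$ of \eqref{mathcal Y and X} started \emph{exactly} at $(0,\mathbf 0_n)$, for which the scaling relation is an exact equality in distribution, $(\mathcal Y_t,\mathcal X_t)_{t\in\R_+}\stackrel{\mathcal D}{=}(T^{-1}\mathcal Y_{Tt},T^{-1}\mathcal X_{Tt})_{t\in\R_+}$; the stochastic-integral entries of the functional $V$ are handled by Riemann-sum approximations (Jacod--Shiryaev, Proposition I.4.44), for which the equality in distribution is inherited termwise and passes to the limit in probability; and the gap between the original process and the auxiliary one is closed by explicit moment estimates (the comparison theorem giving $Y_t\ge\mathcal Y_t$, $\sup_{s\le T}\mathbb E((Y_s-\mathcal Y_s)^2)=O(T)$, $\mathbb E(\|X_s-\mathcal X_s\|_1^4)=O(T^3)$, etc.), showing the difference of the two functionals tends to $0$ in probability. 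This design deliberately avoids any stability theorem for stochastic integrals under weak convergence. What your route buys is a cleaner conceptual picture (everything is a functional of one weakly converging rescaled process) at the price of heavier abstract machinery; what the paper's route buys is self-containedness, at the price of a page of $L^1$/$L^2$ estimates.

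As written, though, your proof has two genuine gaps, both located exactly where the paper's detour is designed to help. First, the functional convergence $(\mathcal Y^{(T)},\mathcal X^{(T)})\Rightarrow(\mathcal Y,\mathcal X)$ amounts to continuity of the law of the solution of \eqref{mathcal Y and X} on path space in its (vanishing) initial condition; this is true but is not immediate for a square-root diffusion and is not what \cite{Barczy critical case} hands you verbatim, so it needs a proof. Second, and more seriously, you understate which blocks require the stochastic-integral stability theorem: it is not only the antisymmetric part of $\int_0^1\mathcal X^{(T)}_s\,\mathrm d\mathcal X^{(T)\mathsf T}_s$. The block $\int_0^1\mathcal Y^{(T)}_s\,\mathrm d\mathcal X^{(T)\mathsf T}_s$ contains integrals $\int_0^1(\mathcal Y^{(T)}_s)^{3/2}\,\mathrm d\tilde B^{j,(T)}_s$ for $j\ge 2$, driven by Brownian components independent of $\mathcal Y^{(T)}$, and these cannot be It\^o-reduced to continuous path functionals either (only the $j=1$ component can, as in your displayed identity). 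So a Kurtz--Protter/Jakubowski--M\'emin--Pag\`es type argument, with verification of the UT/good-sequence condition for the integrators, would be needed for essentially all martingale entries beyond the first rows. Granting those two inputs, your scaling algebra, the invertibility of $U_1,U_2$ via the argument of \eqref{invertibility 1}--\eqref{invertibility 2}, and the final continuous-mapping step are all correct.
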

\begin{proof}
Using the two following equations
\begin{equation*}
G_T^{(1)}=	\begin{bmatrix}
		T^{\frac{1}{2}}&0\\
		0&T^{\frac{3}{2}}
	\end{bmatrix}	\begin{bmatrix}
	1 & -\dfrac{1}{T^2}\displaystyle\int_0^T Y_s \mathrm{d}s\\
	-\dfrac{1}{T^2}\displaystyle\int_0^T Y_s \mathrm{d}s & \dfrac{1}{T^3}\displaystyle\int_0^T Y_s^2 \mathrm{d}s
\end{bmatrix}\begin{bmatrix}
	T^{\frac{1}{2}}&0\\
	0&T^{\frac{3}{2}}
\end{bmatrix}
\end{equation*}
and
\begin{equation*}
h_T^{(1)}=	\begin{bmatrix}
	T&0\\
	0&T^2
\end{bmatrix}	\begin{bmatrix}
\dfrac{\sigma_1}{T}\displaystyle\int_0^T \sqrt{Y_s}\mathrm{~d}B_s^1\\[8pt]
-\dfrac{\sigma_1}{T^2} \displaystyle\int_0^T Y_s\sqrt{Y_s}\mathrm{~d}B_s^1
\end{bmatrix},
\end{equation*}
we deduce that 
\begin{equation}\label{hat a - a and T hat b SCC}
	\begin{bmatrix}
		\hat{a}_T-a\\
		T\hat{b}_T
	\end{bmatrix}=\begin{bmatrix}
	1&0\\
	0&T
\end{bmatrix}	\begin{bmatrix}
\hat{a}_T-a\\
\hat{b}_T
\end{bmatrix}=\begin{bmatrix}
1 & -\dfrac{1}{T^2}\displaystyle\int_0^T Y_s \mathrm{d}s\\[8pt]
-\dfrac{1}{T^2}\displaystyle\int_0^T Y_s \mathrm{d}s & \dfrac{1}{T^3}\displaystyle\int_0^T Y_s^2 \mathrm{d}s
\end{bmatrix}^{-1}	\begin{bmatrix}
\dfrac{\sigma_1}{T}\displaystyle\int_0^T \sqrt{Y_s}\mathrm{~d}B_s^1\\[8pt]
-\dfrac{\sigma_1}{T^2} \displaystyle\int_0^T Y_s\sqrt{Y_s}\mathrm{~d}B_s^1,
\end{bmatrix}.
\end{equation}
Similarly, we get 
\begin{equation}
		\label{hat m - m T hat k and T hat theta SSC}		\begin{bmatrix}
	\hat{m}_T^{\mathsf{T}}-m^{\mathsf{T}}\\
	T \hat{\kappa}_T^{\mathsf{T}}\\
	T \hat{\theta}_T^{\mathsf{T}}
\end{bmatrix}=\begin{bmatrix}
1 & -\dfrac{1}{T^2}\displaystyle\int_0^T Y_s \mathrm{d}s  &-\dfrac{1}{T^2}\displaystyle\int_0^T X_s^{\mathsf{T}} \mathrm{d}s\\[8pt]
-\dfrac{1}{T^2}\displaystyle\int_0^T Y_s \mathrm{d}s &\dfrac{1}{T^3} \displaystyle\int_0^T Y_s^2 \mathrm{d}s &\dfrac{1}{T^3} \displaystyle\int_0^T Y_s X_s^{\mathsf{T}}  \mathrm{d}s\\[8pt]
-\dfrac{1}{T^2}\displaystyle\int_0^T X_s \mathrm{d}s  &   \dfrac{1}{T^3}\displaystyle\int_0^T Y_s X_s \mathrm{d}s  &  \dfrac{1}{T^3}\displaystyle\int_0^T X_s X_s^{\mathsf{T}} \mathrm{d}s 
\end{bmatrix}^{-1}\begin{bmatrix}
\dfrac{1}{T}\displaystyle\int_0^T\sqrt{Y_s}\mathrm{d}B_s^{\mathsf{T}}\tilde{\rho}^{\mathsf{T}}\\[8pt]
-\dfrac{1}{T^2}\displaystyle\int_0^T Y_s \sqrt{Y_s}\mathrm{d}B_s^{\mathsf{T}}\tilde{\rho}^{\mathsf{T}}\\[8pt]
-\dfrac{1}{T^2}\displaystyle\int_0^T \sqrt{Y_s} X_s \mathrm{d}B_s^{\mathsf{T}}\tilde{\rho}^{\mathsf{T}}
\end{bmatrix}.
\end{equation}
By the SDEs given in the model $\eqref{model01}$ in the critical case and for $\kappa=\mathbf{0}_n$, we get
\begin{equation*}
	\begin{bmatrix}
		\dfrac{\sigma_1}{T}\displaystyle\int_0^T \sqrt{Y_s}\mathrm{~d}B_s^1\\[8pt]
		-\dfrac{\sigma_1}{T^2} \displaystyle\int_0^T Y_s\sqrt{Y_s}\mathrm{~d}B_s^1
	\end{bmatrix}=\begin{bmatrix}
		\dfrac{1}{T}(Y_T- Y_0-a)\\
		-\dfrac{1}{T^2}\left(\displaystyle\int_0^T Y_s\mathrm{~d}Y_s-a\displaystyle\int_0^T Y_s\mathrm{d}s\right)
	\end{bmatrix}
\end{equation*}
and 
\begin{equation*}
	\begin{bmatrix}
		\dfrac{1}{T}\displaystyle\int_0^T\sqrt{Y_s}\mathrm{d}B_s^{\mathsf{T}}\tilde{\rho}^{\mathsf{T}}\\[8pt]
		-\dfrac{1}{T^2}\displaystyle\int_0^T Y_s \sqrt{Y_s}\mathrm{d}B_s^{\mathsf{T}}\tilde{\rho}^{\mathsf{T}}\\[8pt]
		-\dfrac{1}{T^2}\displaystyle\int_0^T \sqrt{Y_s} X_s \mathrm{d}B_s^{\mathsf{T}}\tilde{\rho}^{\mathsf{T}}
	\end{bmatrix}
	=
	\begin{bmatrix}
		\dfrac{1}{T}(X_T^{\mathsf{T}}-X_0^{\mathsf{T}}-m^{\mathsf{T}})\\
		-\dfrac{1}{T^2}\left(\displaystyle\int_0^T Y_s\mathrm{~d}X_s^{\mathsf{T}}-\displaystyle\int_0^T Y_s\mathrm{~d}s\,m^{\mathsf{T}}\right)\\
		-\dfrac{1}{T^2}\left(\displaystyle\int_0^T X_s\mathrm{~d}X_s^{\mathsf{T}}-\displaystyle\int_0^T X_s\mathrm{~d}s\,m^{\mathsf{T}}\right)
	\end{bmatrix}.
\end{equation*}
Let us define the functional $V$ defined for $(Y_t)_{t\in[0,T]}$, $(X_t)_{t\in[0,T]}$ and $T$ by
\begin{multline}\label{function V SCC}
	V((Y_t)_{t\in[0,T]},(X_t)_{t\in[0,T]},T)= \left(\dfrac{1}{T}{Y}_T,\,\dfrac{1}{T}{X}_T,\,\dfrac{1}{T^2}\displaystyle\int_0^T {Y}_s\mathrm{~d}s,\,\dfrac{1}{T^2}\displaystyle\int_0^T{X}_s\mathrm{~d}s,\,\dfrac{1}{T^3}\displaystyle\int_0^T{Y}_s^2\mathrm{~d}s,\right.\\
  \left.\dfrac{1}{T^3}\displaystyle\int_0^T {X}_s {X}_s^{\mathsf{T}}\mathrm{~d}s,\,\dfrac{1}{T^3}\displaystyle\int_0^T{Y}_s{X}_s\mathrm{~d}s,\,\dfrac{1}{T^2}\displaystyle\int_0^T{Y}_s\mathrm{~d}{Y}_s,\,\dfrac{1}{T^2}\displaystyle\int_0^T{Y}_s\mathrm{~d}{X}_s,\,\dfrac{1}{T^2}\displaystyle\int_0^T{X}_s\mathrm{~d}{X}_s^{\mathsf{T}}\right).
\end{multline}
Hence, in order to prove that $V((Y_t)_{t\in[0,T]},(X_t)_{t\in[0,T]},T)\stackrel{\mathcal{D}}{\longrightarrow}V((\mathcal{Y}_t)_{t\in[0,1]},(\mathcal{X}_t)_{t\in[0,1]},1)$ it is enough, by Slutsky's lemma, to prove that $V((\mathcal{Y}_t)_{t\in[0,T]},(\mathcal{X}_t)_{t\in[0,T]},T)\stackrel{\mathcal{D}}{=}V((\mathcal{Y}_t)_{t\in[0,1]},(\mathcal{X}_t)_{t\in[0,1]},1)$ and that $V((Y_t)_{t\in[0,T]},(X_t)_{t\in[0,T]},T)-V((\mathcal{Y}_t)_{t\in[0,T]},(\mathcal{X}_t)_{t\in[0,T]},T)\stackrel{\mathbb{P}}{\longrightarrow}\mathbf{0}$.
Firstly, concerning the equality in distribution, using the scaling property of the Brownian motion $B$, we can show, for all $T\in\R_{++}$, that $(\mathcal{Y}_t,\mathcal{X}_t)_{t\in\R_+}\stackrel{\mathcal{D}}{=}\left(\dfrac{1}{T}\mathcal{Y}_{Tt},\dfrac{1}{T}\mathcal{X}_{Tt}\right)_{t\in\R_+}$. In fact, for $t\in\R_+$, through the SDE \eqref{mathcal Y and X}, we get
\begin{equation}
\label{scaling property}
	\begin{cases}
		\dfrac{1}{T}\mathcal{Y}_{Tt}=at+\rho_{11}\displaystyle\int_0^{Tt}\sqrt{\dfrac{\mathcal{Y}_s}{T}}\dfrac{\mathrm{d}B_s^1}{\sqrt{T}}
		\\\dfrac{1}{T}\mathcal{X}_{Tt}=tm+\displaystyle\int_0^{Tt}\sqrt{\dfrac{\mathcal{Y}_s}{T}}\tilde{\rho}\dfrac{\mathrm{d}B_s}{\sqrt{T}}
	\end{cases}\quad \Longleftrightarrow\quad \begin{cases}
	\bar{\mathcal{Y}}_{t}=at+\rho_{11}\displaystyle\int_0^{t}\sqrt{\bar{\mathcal{Y}}_s}\mathrm{~d}\bar{B}_s^1
	\\ \bar{\mathcal{X}}_{t}=tm+\displaystyle\int_0^{t}\sqrt{\bar{\mathcal{Y}}_s}\tilde{\rho}\mathrm{~d}\bar{B}_s,
\end{cases}
\end{equation}
where $\bar{\mathcal{Y}}_s=\dfrac{1}{T}\mathcal{Y}_{Ts}$, $\bar{\mathcal{X}}_s=\dfrac{1}{T}\mathcal{X}_{Ts}$ and $\bar{B}_s=\dfrac{1}{\sqrt{T}}B_{Ts}$. Hence we deduce that $(\mathcal{Y}_t,\mathcal{X}_t)_{t\in\R_+}\stackrel{\mathcal{D}}{=}(\bar{\mathcal{Y}}_t,\bar{\mathcal{X}}_t)_{t\in\R_+}$. Secondly, by Jacod and Shiryaev \cite[Proposition I.4.44]{Jacod}, the vector $V_N((\mathcal{Y})_{t\in[0,1]},(\mathcal{X}_t)_{t\in[0,1]},1)$ given by
\begin{multline*}
	\left(\mathcal{Y}_1,\,\mathcal{X}_1,\,\dfrac{1}{N}\displaystyle\sum_{i=1}^N \mathcal{Y}_{\frac{i}{N}},\,\dfrac{1}{N}\displaystyle\sum_{i=1}^N\mathcal{X}_{\frac{i}{N}},\,\dfrac{1}{N}\displaystyle\sum_{i=1}^N\mathcal{Y}_{\frac{i}{N}}^2,\,\dfrac{1}{N}\displaystyle\sum_{i=1}^N \mathcal{X}_{\frac{i}{N}} \mathcal{X}_{\frac{i}{N}}^{\mathsf{T}},\,\dfrac{1}{N}\displaystyle\sum_{i=1}^N\mathcal{Y}_{\frac{i}{N}}\mathcal{X}_{\frac{i}{N}},\,\displaystyle\sum_{i=1}^N\mathcal{Y}_{\frac{i-1}{N}}\left(\mathcal{Y}_{\frac{i}{N}}-\mathcal{Y}_{\frac{i-1}{N}}\right),\right.\\
  \left.\displaystyle\sum_{i=1}^N\mathcal{Y}_{\frac{i-1}{N}}\left(\mathcal{X}_{\frac{i}{N}}-\mathcal{X}_{\frac{i-1}{N}}\right),\,\displaystyle\sum_{i=1}^N\mathcal{Y}_{\frac{i-1}{N}}\left(\mathcal{X}_{\frac{i}{N}}-\mathcal{X}_{\frac{i-1}{N}}\right)^{\mathsf{T}}\right)
\end{multline*}
converges in probability to $V((\mathcal{Y}_t)_{t\in[0,1]},(\mathcal{X}_t)_{t\in[0,1]},1)$, as $N\to\infty$. Thirdly, by the same argument and using a change of variables, the vector $V_N((\mathcal{Y})_{t\in[0,T]},(\mathcal{X}_t)_{t\in[0,T]},T)$ given by
\begin{multline*}
	\left(\dfrac{1}{T}\mathcal{Y}_T,\,\dfrac{1}{T}\mathcal{X}_T,\,\dfrac{1}{N}\displaystyle\sum_{i=1}^N \dfrac{\mathcal{Y}_{\frac{Ti}{N}}}{T},\,\dfrac{1}{N}\displaystyle\sum_{i=1}^N\dfrac{\mathcal{X}_{\frac{Ti}{N}}}{T},\,\dfrac{1}{N}\displaystyle\sum_{i=1}^N\dfrac{\mathcal{Y}^2_{\frac{Ti}{N}}}{T},\,\dfrac{1}{N}\displaystyle\sum_{i=1}^N \dfrac{\mathcal{X}_{\frac{Ti}{N}}}{T} \dfrac{\mathcal{X}_{\frac{Ti}{N}}^{\mathsf{T}}}{T},\,\dfrac{1}{N}\displaystyle\sum_{i=1}^N\dfrac{\mathcal{Y}_{\frac{Ti}{N}}}{T}\dfrac{\mathcal{X}_{\frac{Ti}{N}}}{T},\right.\\
\left.\displaystyle\sum_{i=1}^N\dfrac{\mathcal{Y}_{\frac{T(i-1)}{N}}}{T}\left(\dfrac{\mathcal{Y}_{\frac{Ti}{N}}}{T}-\dfrac{\mathcal{Y}_{\frac{T(i-1)}{N}}}{T}\right),\,\displaystyle\sum_{i=1}^N\dfrac{\mathcal{Y}_{\frac{T(i-1)}{N}}}{T}\left(\dfrac{\mathcal{X}_{\frac{Ti}{N}}}{T}-\dfrac{\mathcal{X}_{\frac{T(i-1)}{N}}}{T}\right),\,\displaystyle\sum_{i=1}^N\dfrac{\mathcal{Y}_{\frac{T(i-1)}{N}}}{T}\left(\dfrac{\mathcal{X}_{\frac{Ti}{N}}}{T}-\dfrac{\mathcal{X}_{\frac{T(i-1)}{N}}}{T}\right)^{\mathsf{T}}\right)
\end{multline*}
converges in probability to $V((\mathcal{Y}_t)_{t\in[0,T]},(\mathcal{X}_t)_{t\in[0,T]},T)$, as $N\to\infty$. Consequently, since we have\\ $(\mathcal{Y}_t,\mathcal{X}_t)_{t\in\R_+}\stackrel{\mathcal{D}}{=}\left(\dfrac{1}{T}\mathcal{Y}_{Tt},\dfrac{1}{T}\mathcal{X}_{Tt}\right)_{t\in\R_+}$, then we get $V_N((\mathcal{Y}_t)_{t\in[0,1]},(\mathcal{X}_t)_{t\in[0,1]},1)\stackrel{\mathcal{D}}{=}V_N((\mathcal{Y}_t)_{t\in[0,T]},(\mathcal{X}_t)_{t\in[0,T]},T)$ which implies that 
$V((\mathcal{Y}_t)_{t\in[0,1]},(\mathcal{X}_t)_{t\in[0,1]},1)\stackrel{\mathcal{D}}{=}V((\mathcal{Y}_t)_{t\in[0,T]},(\mathcal{X}_t)_{t\in[0,T]},T)$. The aim of the following discussion is to prove that $V((Y_t)_{t\in[0,T]},(X_t)_{t\in[0,T]},T)-V((\mathcal{Y}_t)_{t\in[0,T]},(\mathcal{X}_t)_{t\in[0,T]},T)\stackrel{\mathbb{P}}{\longrightarrow}\mathbf{0}$ using the first absolute moments criteria. At first, we have 
\begin{equation}Y_T-\mathcal{Y}_T=Y_0+\sigma_1\displaystyle\int_0^T (\sqrt{Y_s}-\sqrt{\mathcal{Y}_s})\mathrm{~d}B_s^1
	\label{Yt-Ycalt}
\end{equation}
 and thanks to the comparison theorem given in Ikeda and Watanabe \cite{Ikeda}, we get $Y_t\geq\mathcal{Y}_t$ almost surely, for all $t\in\R_{+}$. Consequently, since $\dfrac{1}{T^2}\mathbb{E}((\sqrt{Y_s}-\sqrt{\mathcal{Y}_s})^2)\leq\dfrac{1}{T^2}\mathbb{E}(Y_s)+\dfrac{1}{T^2}\mathbb{E}(\mathcal{Y}_s)<\infty$ by Remark \ref{Remark moment resulty on time t}, we deduce that $\mathbb{E}\left( \dfrac{1}{T}\displaystyle\int_0^T (\sqrt{Y_s}-\sqrt{\mathcal{Y}_s})\mathrm{~d}B_s^1\right)=0$.
Hence, we obtain	$\mathbb{E}\left(\dfrac{1}{T}(Y_T-\mathcal{Y}_T)\right)\leq \dfrac{1}{T}\mathbb{E}(Y_0)\rightarrow 0$
and by Fubini's theorem, we get
\begin{equation*}
		\mathbb{E}\left(\dfrac{1}{T^2} \displaystyle\int_0^T (Y_s-\mathcal{Y}_s)\mathrm{~d}s\right)\leq\dfrac{1}{T^2} \displaystyle\int_0^T 	\mathbb{E}\left( Y_s-\mathcal{Y}_s\right)\mathrm{~d}s\leq \dfrac{1}{T}\mathbb{E}(Y_0)\rightarrow 0,\quad \text{as } T\to\infty.
\end{equation*}
Hence, we obtain $\dfrac{1}{T}(Y_T-\mathcal{Y}_T)\stackrel{\mathbb{P}}{\longrightarrow}0$ and $\dfrac{1}{T^2}\displaystyle\int_0^T(Y_s-\mathcal{Y}_s)\mathrm{~d}s\stackrel{\mathbb{P}}{\longrightarrow}0$, as $T\to\infty$. In a similar way, we prove that $\dfrac{1}{T}(X_T-\mathcal{X}_T)\stackrel{\mathbb{P}}{\longrightarrow}\mathbf{0}_n$ and that $\dfrac{1}{T^2}\displaystyle\int_0^T(X_s-\mathcal{X}_s)\mathrm{~d}s\stackrel{\mathbb{P}}{\longrightarrow}\mathbf{0}_n$, as $T\to\infty$.
Next, using Minkowski inequality and Fubini's theorem, we get 
\begin{align*}
	\mathbb{E}((Y_s-\mathcal{Y}_s)^2)&\leq 2\mathbb{E}\left(Y_0^2\right)+2\sigma_1^2\mathbb{E}\left(\displaystyle\int_0^s (\sqrt{Y_u}-\sqrt{\mathcal{Y}_u})^2\mathrm{~d}u\right)\\
	&\leq 2\mathbb{E}\left(Y_0^2\right)+2\sigma_1^2\mathbb{E}\left(\displaystyle\int_0^s (Y_u-\mathcal{Y}_u)\mathrm{~d}u\right)=2\mathbb{E}(Y_0^2)+2\sigma_1^2 \mathbb{E}(Y_0)s,
\end{align*}
thus we get $\underset{s\in[0,T]}{\sup}\mathbb{E}\left( (Y_s-\mathcal{Y}_s)^2\right)=O(T)$. Furthermore, we have $\underset{s\in[0,T]}{\sup}\mathbb{E}(Y_s^2+\mathcal{Y}_s^2)=O(T^2)$. Hence, by combining Cauchy-Schwarz and Minkowski inequalities, we deduce that $$
		\underset{s\in[0,T]}{\sup}\mathbb{E}\left( Y_s^2-\mathcal{Y}_s^2\right)\leq\underset{s\in[0,T]}{\sup}\sqrt{2\,	\mathbb{E}\left( (Y_s-\mathcal{Y}_s)^2\right)	\mathbb{E}\left(Y_s^2+\mathcal{Y}_s^2\right)}=O(T^{\frac{3}{2}}).$$ In the following, we denote by $C$ a positive constant independent of time that may change its value from line to line. Consequently, we get
\begin{equation*}
	\mathbb{E}\left(\left\vert	\dfrac{1}{T^3}\displaystyle\int_0^T(Y_s^2-\mathcal{Y}_s^2)\mathrm{~d}s\right\vert\right)\leq C \dfrac{T^{\frac{5}{2}}}{T^3}\to 0,\quad\text{as }T\to\infty. 
\end{equation*}
Similarly, it is easy to check that $\underset{s\in[0,T]}{\sup}\mathbb{E}\left(\left\| X_s-\mathcal{X}_s \right\|_1^2\right)= O(T)$, as $T\to\infty$ and that \\ \noindent$\underset{s\in[0,T]}{\sup}\mathbb{E}\left(\left\| X_s\right\|_1^2+\left\| \mathcal{X}_s\right\|_1^2\right)=O(T^2)$, as $T\to\infty$. Hence, we get
\begin{align*}
	\mathbb{E}\left(\left\|\dfrac{1}{T^3}\displaystyle\int_0^T (X_s X_s^{\mathsf{T}}-\mathcal{X}_s \mathcal{X}_s^{\mathsf{T}})\mathrm{~d}s\right\|_1\right)
&\leq\dfrac{1}{T^3}\displaystyle\int_0^T\left(\mathbb{E}\left(\left\| (X_s -\mathcal{X}_s )X_s^{\mathsf{T}}\right\|_1\right)+\mathbb{E}\left(\left\| \mathcal{X}_s( X_s- \mathcal{X}_s)^{\mathsf{T}}\right\|_1\right)\right)\mathrm{d}s\\
&\leq\dfrac{1}{T^3}\displaystyle\int_0^T\left(\mathbb{E}\left(\left\| X_s -\mathcal{X}_s\right\|_1\left\|X_s\right\|_1\right)+\mathbb{E}\left(\left\| \mathcal{X}_s\right\|_1\left\| X_s- \mathcal{X}_s\right\|_1\right)\right)\mathrm{d}s\\
&\leq\dfrac{1}{T^3}\displaystyle\int_0^T\sqrt{\mathbb{E}\left(\left\| X_s -\mathcal{X}_s \right\|_1^2\right)}\left(\sqrt{\mathbb{E}\left(\left\|X_s\right\|_1^2\right)}+\sqrt{\mathbb{E}\left(\left\| \mathcal{X}_s\right\|_1^2\right)}\right)\mathrm{d}s\\
&\leq C\dfrac{T^{\frac{5}{2}}}{T^3}\to 0,\quad\text{as }T\to\infty. 
\end{align*}
In addition, we obtain
\begin{align*}	\mathbb{E}\left(\left\|\dfrac{1}{T^3}\displaystyle\int_0^T (Y_s X_s-\mathcal{Y}_s \mathcal{X}_s)\mathrm{~d}s\right\|_1\right)&\leq\dfrac{1}{T^3}\displaystyle\int_0^T \left(\mathbb{E}\left(\left( Y_s -\mathcal{Y}_s\right)\left\| X_s\right\|_1\right)+\mathbb{E}\left(\mathcal{Y}_s\left\| X_s-\mathcal{X}_s\right\|_1\right)\right)\mathrm{d}s\\
	&\leq \dfrac{1}{T^3}\displaystyle\int_0^T \left(\sqrt{\mathbb{E}\left(\left( Y_s -\mathcal{Y}_s\right)^2\right)\mathbb{E}\left(\left\| X_s\right\|_1^2\right)}+\sqrt{\mathbb{E}\left(\mathcal{Y}_s^2\right)\mathbb{E}\left(\left\| X_s-\mathcal{X}_s\right\|_1^2\right)}\right)\mathrm{d}s\\
	&\leq C\dfrac{T^{\frac{5}{2}}}{T^3}\to 0,\quad\text{as }T\to\infty.
\end{align*}
Finally, by following the same method used in \cite[page 21,22]{Boylog}, it is easy to check the convergence of the last three terms of $V((Y_t)_{t\in[0,T]},(X_t)_{t\in[0,T]},T)-V((\mathcal{Y}_t)_{t\in[0,T]},(\mathcal{X}_t)_{t\in[0,T]},T)$. In the following, we treat, for example, the proof related to the last term. By a simple decomposition then by applying  Cauchy-Schwarz inequality, we obtain 
\begin{align*}
	\mathbb{E}\left(\dfrac{1}{T^2}\left\|\displaystyle\int_0^T X_s\mathrm{~d}X_s^{\mathsf{T}}-\displaystyle\int_0^T \mathcal{X}_s\mathrm{~d}\mathcal{X}_s^{\mathsf{T}}\right\|_1\right)&\leq \dfrac{1}{T^2}\sqrt{\mathbb{E}\left(\left\|\displaystyle\int_0^T (X_s-\mathcal{X}_s)\mathrm{~d}X_s^{\mathsf{T}} \right\|_1^2\right)}+\dfrac{1}{T^2}\sqrt{\mathbb{E}\left(\left\|\displaystyle\int_0^T \mathcal{X}_s\mathrm{~d}(X_s-\mathcal{X}_s)^{\mathsf{T}} \right\|_1^2\right)}.
\end{align*}
On the one hand, by equation (6.11) in \cite{Boylog}, we have $\mathbb{E}\left(\left\|X_s-\mathcal{X}_s\right\|_1^4\right)=O(T^3)$, then we obtain 
\begin{align*}
	\mathbb{E}\left(\left\|\displaystyle\int_0^T (X_s-\mathcal{X}_s)\mathrm{~d}X_s^{\mathsf{T}} \right\|_1^2\right)&\leq2\norm{m}_1^2\mathbb{E}\left(\left\|\displaystyle\int_0^T (X_s-\mathcal{X}_s)\mathrm{~d}s\right\|_1^2\right)+2\norm{\tilde{\rho}}_1^2\mathbb{E}\left(\left\|\displaystyle\int_0^T\sqrt{Y_s} (X_s-\mathcal{X}_s)\mathrm{~d}B_s^{\mathsf{T}}\right\|_1^2\right)\\
	&\leq2\norm{m}_1^2\displaystyle\int_0^T\int_0^T \mathbb{E}\left(\norm{X_s-\mathcal{X}_s}_1\norm{X_u-\mathcal{X}_u}_1\right)\mathrm{d}s\mathrm{~d}u\\
	&\quad+2\norm{\tilde{\rho}}_1^2\displaystyle\int_0^T\mathbb{E}\left(Y_s \left\|X_s-\mathcal{X}_s\right\|_1^2\right)\mathrm{d}s\\
	&\leq2\norm{m}_1^2\displaystyle\int_0^T\int_0^T\sqrt{ \mathbb{E}\left(\norm{X_s-\mathcal{X}_s}_1^2\right)\mathbb{E}\left(\norm{X_u-\mathcal{X}_u}_1^2\right)}\mathrm{~d}s\mathrm{~d}u\\
	&\quad+2\norm{\tilde{\rho}}_1^2\displaystyle\int_0^T\sqrt{\mathbb{E}\left(Y_s^2\right) \mathbb{E}\left(\left\|X_s-\mathcal{X}_s\right\|_1^4\right)}\mathrm{d}s\\
	&\leq C\left( \displaystyle\int_0^T\int_0^T\sqrt{T^2}\mathrm{~d}s\mathrm{~d}u+\displaystyle\int_0^T\sqrt{T^5}\mathrm{~d}s\right)\leq C T^{\frac{7}{2}}.
\end{align*}
On the other hand, using equation (6.12) in \cite{Boylog}, we have $\mathbb{E}\left(\norm{\mathcal{X}_s}_1^4\right)=O(T^4)$, then we get
\begin{align*}
	\mathbb{E}\left(\left\|\displaystyle\int_0^T \mathcal{X}_s\mathrm{~d}(X_s-\mathcal{X}_s)^{\mathsf{T}} \right\|_1^2\right)&\leq \norm{\tilde{\rho}}_1^2\mathbb{E}\left(\displaystyle\int_0^T (\sqrt{Y_s}-\sqrt{\mathcal{Y}_s})^2\norm{\mathcal{X}_s}_1^2\mathrm{~d}s\right)\leq \norm{\tilde{\rho}}_1^2\displaystyle\int_0^T \mathbb{E}\left((Y_s-\mathcal{Y}_s)\norm{\mathcal{X}_s}_1^2\right)\mathrm{~d}s\\
	&\leq \norm{\tilde{\rho}}_1^2\displaystyle\int_0^T \sqrt{\mathbb{E}\left((Y_s-\mathcal{Y}_s)^2\right)\mathbb{E}\left(\norm{\mathcal{X}_s}_1^4\right)}\mathrm{~d}s\leq C\displaystyle\int_0^T \sqrt{T^5}\mathrm{~d}s=C T^{\frac{7}{2}}.
\end{align*}
Hence, by combining the above two results, we deduce that
\begin{align*}
	\mathbb{E}\left(\dfrac{1}{T^2}\left\|\displaystyle\int_0^T X_s\mathrm{~d}X_s^{\mathsf{T}}-\displaystyle\int_0^T \mathcal{X}_s\mathrm{~d}\mathcal{X}_s^{\mathsf{T}}\right\|_1\right)\leq C \dfrac{T^{\frac{7}{4}}}{T^2}\to 0,\quad\text{as }T\to\infty,
\end{align*}
which completes the proof.
\end{proof}
\subsection{A special supercritical case} \label{A special supercritical case continuous observations}
We recall that, based on the classification given in Proposition \ref{classification}, the $\mathit{AD}(1,n)$ model is set to be in the supercritical case if $b\wedge\lambda_{\max}(\theta)\in\R_{--}$. In the following, we consider the special critical case when $b\in(\lambda_{\max}(\theta),0)$ under the additional assumption 
$\text{diag}(P^{-1}m)P^{-1}\kappa\in\R_-^n$, where $P$ is a modal matrix transforming $\theta$ to the diagonal matrix $D$, $\theta=PDP^{-1}$. Note that the last assumption is used in the proof of relation \eqref{convergence exp int Y X X} which will be needed in the proof of the following Theorem and in the particular case $n=1$, it becomes $m\kappa\in\R_{-}$ which corresponds to the conditon given in \cite[7.3 Theorem]{Boylog}.

\begin{theoreme}
	Let $a\in\R_{+}$, $b\in\R_{--}$ $m\in\R^n$, $\kappa\in\R^n$ and $\theta\in\mathcal{M}_n$ a diagonalizable negative definite matrix such that $b\in(\lambda_{\max}(\theta),0)$ and $\text{diag}(P^{-1}m)P^{-1}\kappa\in\R_-^n$. Let $(Z_t)_{t\in\R_+}$ be the unique strong solution of the SDE $\eqref{model0 Z}$ with initial random values $Z_0=(Y_0,X_0)^\mathsf{T}$ independent of $(B_t)_{t\in\R_{+}}$ satisfying $\mathbb{P}(Y_0\in\R_{++})=1$. Suppose that $\text{diag}(P^{-1}m)P^{-1}\kappa\in\R_-^n$. Then
\begin{equation}
	Q_T(\hat{\tau}_T-\tau)\stackrel{\mathcal{D}}{\longrightarrow}V^{-1}\eta\xi,\quad \text{as } T\to \infty,
	\label{distribution convergence special supercritical case}
\end{equation}
where $Q_T=\text{diag}\left(Te^{\frac{bT}{2}},e^{-\frac{bT}{2}},\mathbf{I}_n\otimes \tilde{Q}_T
\right)\in\mathcal{M}_{d^2+1}$, with $\tilde{Q}_T=\text{diag}\left(Te^{\frac{bT}{2}}, e^{-\frac{bT}{2}},e^{\frac{(b-2\lambda_{\min}(\theta))}{2}T}\mathbf{I}_n\right)$,\\ $V=\text{diag}(V_1,\mathbf{I}_n\otimes V_2)$, with
\begin{equation*}
V_1=\begin{bmatrix}
	1&\dfrac{\mathrm{C}_1}{b}\\[7pt]
	0&-\dfrac{\mathrm{C}_1^2}{2b}
\end{bmatrix}\quad \text{and}\quad V_2=\begin{bmatrix}
1&\dfrac{\mathrm{C}_1}{b}&\dfrac{1}{\lambda_{\min}(\theta)}\mathrm{C}_J^{\mathsf{T}}(\theta)\\[7pt]
0&-\dfrac{\mathrm{C}_1^2}{2b}&-\dfrac{\mathrm{C}_1}{b+\lambda_{\min}(\theta)}\mathrm{C}_J^{\mathsf{T}}(\theta)\\[7pt]
\mathbf{0}_n&-\dfrac{\mathrm{C}_1}{b+\lambda_{\min}(\theta)}\mathrm{C}_J(\theta)&-\dfrac{1}{2\lambda_{\min}(\theta)}\mathrm{C}_J(\theta)\mathrm{C}_J^{\mathsf{T}}(\theta)
\end{bmatrix}
\end{equation*}
where $\mathrm{C}_1$ and $\mathrm{C}_J(\theta)$ are the almost sure limits of $e^{bt}Y_t$ and $e^{\lambda_{\min}(\theta)t}X_t$, respectively, as $T\to\infty$ and $\xi$ is a $d^2+1$-dimensional standard normally distributed random vector independent of $\eta\in\mathcal{M}_{d^2+1}$ which is defined as follows
\begin{equation}
	 \eta\eta^{\mathsf{T}}=\begin{bmatrix}
	 	\sigma_1^2\mathcal{C}_1&\sigma_1(\rho_{J1}\otimes\mathcal{C}_{2})\\\sigma_1(\rho_{J1}\otimes\mathcal{C}_{2})^{\mathsf{T}}&\tilde{\rho}\tilde{\rho}^{\mathsf{T}}\otimes\mathcal{C}_{3}
	 \end{bmatrix},
 \label{eta eta^T}
 \end{equation}
with $\mathcal{C}_{1}=\begin{bmatrix}
-\dfrac{\mathrm{C}_1}{b}&\dfrac{\mathrm{C}_1^2}{2b}\\[5pt]
\dfrac{\mathrm{C}_1^2}{2b}&-\dfrac{\mathrm{C}_1^3}{3b}
\end{bmatrix}$, $\mathcal{C}_{2}=\begin{bmatrix}-\dfrac{\mathrm{C}_1}{b}&\dfrac{\mathrm{C}_1^2}{2b}&\dfrac{\mathrm{C}_1}{b+\lambda_{\min}(\theta)}\mathrm{C}_J^{\mathsf{T}}(\theta)\\[5pt]
\dfrac{\mathrm{C}_1^2}{2b}&-\dfrac{\mathrm{C}_1^3}{3b}&-\dfrac{\mathrm{C}_1^2}{2b+\lambda_{\min}(\theta)}\mathrm{C}_J^{\mathsf{T}}(\theta)
\end{bmatrix}$ and
$$\mathcal{C}_{3}=\begin{bmatrix}
-\dfrac{\mathrm{C}_1}{b}&\dfrac{\mathrm{C}_1^2}{2b}&\dfrac{\mathrm{C}_1}{b+\lambda_{\min}(\theta)}\mathrm{C}_J^{\mathsf{T}}(\theta)\\[5pt]
\dfrac{\mathrm{C}_1^2}{2b}&-\dfrac{\mathrm{C}_1^3}{3b}&-\dfrac{\mathrm{C}_1^2}{2b+\lambda_{\min}(\theta)}\mathrm{C}_J^{\mathsf{T}}(\theta)\\[5pt]
\dfrac{\mathrm{C}_1}{b+\lambda_{\min}(\theta)}\mathrm{C}_J(\theta)&-\dfrac{\mathrm{C}_1^2}{2b+\lambda_{\min}(\theta)}\mathrm{C}_J(\theta)&-\dfrac{\mathrm{C}_1}{b+2\lambda_{\min}(\theta)}\mathrm{C}_J(\theta) \mathrm{C}_J^{\mathsf{T}}(\theta)
\end{bmatrix}.$$
\label{normality theorem supercritical case}
\end{theoreme}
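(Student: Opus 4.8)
The plan is to start from the error representation \eqref{err}, $\hat{\tau}_T-\tau=G_T^{-1}h_T$, and to renormalise it by inserting an auxiliary scaling matrix adapted to the martingale part $h_T$. Since $b\in\R_{--}$ and $\lambda_{\min}(\theta)<0$, this special supercritical regime is non-ergodic and the coordinates of $G_T$ and $h_T$ explode at different exponential rates; this forces the use of \emph{two different} normalisations on the left and on the right, which is precisely why the limiting matrix $V$ fails to be symmetric. I would introduce the companion diagonal matrix
\[
S_T=\text{diag}\left(S_T^{(1)},\,\mathbf{I}_n\otimes S_T^{(2)}\right),\qquad
S_T^{(1)}=\text{diag}\left(e^{-\frac{bT}{2}},\,e^{-\frac{3bT}{2}}\right),\qquad
S_T^{(2)}=\text{diag}\left(e^{-\frac{bT}{2}},\,e^{-\frac{3bT}{2}},\,e^{-\left(\frac{b}{2}+\lambda_{\min}(\theta)\right)T}\mathbf{I}_n\right),
\]
each exponent being read off from the order of the corresponding coordinate of $h_T$, whose quadratic variations behave like $\int_0^TY_s\,\mathrm{d}s$, $\int_0^TY_s^3\,\mathrm{d}s$ and $\int_0^TY_sX_sX_s^{\mathsf{T}}\,\mathrm{d}s$. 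Writing
\[
Q_T(\hat{\tau}_T-\tau)=\left(Q_T G_T^{-1}S_T\right)\left(S_T^{-1}h_T\right),
\]
the theorem reduces to the two statements $Q_T G_T^{-1}S_T\stackrel{a.s.}{\longrightarrow}V^{-1}$ and $S_T^{-1}h_T\stackrel{\mathcal{D}}{\longrightarrow}\eta\xi$.

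For the first convergence I would equivalently prove $S_T^{-1}G_T Q_T^{-1}\stackrel{a.s.}{\longrightarrow}V$, which is cleaner because $G_T$ is given explicitly block by block in \eqref{fT1 GT1}--\eqref{fT2 GT2} and because the Kronecker structure $(\mathbf{I}_n\otimes\tilde{Q}_T)(\mathbf{I}_n\otimes G_T^{(2)})^{-1}(\mathbf{I}_n\otimes S_T^{(2)})=\mathbf{I}_n\otimes\left(\tilde{Q}_T(G_T^{(2)})^{-1}S_T^{(2)}\right)$ collapses the $X$-block to a single factor. The core input is the almost sure asymptotics of the integral functionals of $(Y,X)$: using the almost sure limits $e^{bt}Y_t\to\mathrm{C}_1$ and $e^{\lambda_{\min}(\theta)t}X_t\to\mathrm{C}_J(\theta)$, together with the elementary fact that $e^{ct}\int_0^te^{-cs}f(s)\,\mathrm{d}s$ has the same exponential order as $f$ when $c<0$, one gets $\int_0^TY_s\,\mathrm{d}s\sim\frac{\mathrm{C}_1}{-b}e^{-bT}$, $\int_0^TY_s^2\,\mathrm{d}s\sim\frac{\mathrm{C}_1^2}{-2b}e^{-2bT}$, $\int_0^TX_s\,\mathrm{d}s\sim\frac{\mathrm{C}_J(\theta)}{-\lambda_{\min}(\theta)}e^{-\lambda_{\min}(\theta)T}$, and the analogous estimates for the mixed terms, the last of which is the content of \eqref{convergence exp int Y X X} and is exactly where the sign assumption $\text{diag}(P^{-1}m)P^{-1}\kappa\in\R_-^n$ is used. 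Substituting these asymptotics and performing the two-sided scaling entrywise yields $\left(S_T^{(1)}\right)^{-1}G_T^{(1)}\left(Q_T^{(1)}\right)^{-1}\to V_1$ and $\left(S_T^{(2)}\right)^{-1}G_T^{(2)}\tilde{Q}_T^{-1}\to V_2$; the off-diagonal entries carrying an extra factor $1/T$ vanish in the limit, which is what produces the zero entries of $V_1$ and $V_2$.

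The heart of the proof is the second convergence, $S_T^{-1}h_T\stackrel{\mathcal{D}}{\longrightarrow}\eta\xi$, a central limit theorem for the continuous local martingale $h_T$ in the non-ergodic regime, where the limit is \emph{mixed} normal: $\xi$ is standard Gaussian while $\eta$ is random through $\mathrm{C}_1$ and $\mathrm{C}_J(\theta)$. I would first show that the scaled quadratic variation $S_T^{-1}\langle h\rangle_T S_T^{-1}$ converges almost surely to $\eta\eta^{\mathsf{T}}$ as displayed in \eqref{eta eta^T}; this again reduces to the asymptotics of $\int_0^TY_s\,\mathrm{d}s$, $\int_0^TY_s^3\,\mathrm{d}s$, $\int_0^TY_sX_sX_s^{\mathsf{T}}\,\mathrm{d}s$ and the mixed $Y$--$X$ integrals, with the Kronecker factors $\rho_{J1}\otimes\cdot$ and $\tilde{\rho}\tilde{\rho}^{\mathsf{T}}\otimes\cdot$ arising from the coupling of the Brownian coordinates through $\tilde{\rho}$. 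With the almost sure convergence of the normalised quadratic variation established, I would invoke a central limit theorem for continuous local martingales with random limiting quadratic variation (the non-ergodic counterpart of the tool used in Theorem \ref{Asymptotic Normality CLSE b>0 Continuous}, recalled in the appendix) to obtain the stable convergence $S_T^{-1}h_T\stackrel{\mathcal{D}}{\longrightarrow}\eta\xi$ with $\xi$ independent of $\eta$.

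Finally, since $Q_T G_T^{-1}S_T\to V^{-1}$ almost surely and $V$ is measurable with respect to the $\sigma$-field carrying $\mathrm{C}_1$ and $\mathrm{C}_J(\theta)$ --- with respect to which the convergence of $S_T^{-1}h_T$ is stable --- Slutsky's lemma applied to the product gives $Q_T(\hat{\tau}_T-\tau)\stackrel{\mathcal{D}}{\longrightarrow}V^{-1}\eta\xi$, which is \eqref{distribution convergence special supercritical case}. The step I expect to be the main obstacle is this martingale central limit theorem: establishing the almost sure convergence of every scaled (co-)quadratic variation in the multidimensional setting, pinning down the cross terms between the $Y$-driven and $X$-driven stochastic integrals so as to reproduce the $\mathcal{C}_2$ blocks of \eqref{eta eta^T}, and rigorously justifying the stable, mixed-normal mode of convergence rather than ordinary convergence in distribution.
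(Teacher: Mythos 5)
Your proposal follows essentially the same route as the paper's proof: the paper uses the identical decomposition $Q_T(\hat{\tau}_T-\tau)=(Q_TG_T^{-1}R_T^{-1})(R_Th_T)$ with $R_T=S_T^{-1}$, the same almost sure exponential asymptotics \eqref{convergence exp int Y X X} obtained from $e^{bt}Y_t\to\mathrm{C}_1$, $e^{\lambda_{\min}(\theta)t}X_t\to\mathrm{C}_J(\theta)$ and the Kronecker lemma, the same convergence $R_T\langle h\rangle_TR_T^{\mathsf{T}}\to\eta\eta^{\mathsf{T}}$ followed by van Zanten's multivariate martingale CLT (Theorem \ref{CLT Van Zanten}) giving the joint convergence $(R_Th_T,A)\to(\eta\xi,A)$, and the same Slutsky-type conclusion with $A=V$. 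The only step you leave implicit that the paper spells out is the almost sure invertibility of $V$ (via $\det(V_1)=-\mathrm{C}_1^2/(2b)>0$ and a quadratic-form argument for $V_2$ using $\lambda_{\min}(\theta)<b$), which is needed before you may pass to $V^{-1}$.
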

\begin{proof}
	We have by equation \eqref{err}
	\begin{equation*}
			Q_T(\hat{\tau}_T-\tau)=Q_T\begin{bmatrix}
				(G_T^{(1)})^{-1}h_T^{(1)}\\
				(I_{n}\otimes G_T^{(2)})^{-1}\text{vec}(h_T^{(2)})
			\end{bmatrix}=(Q_T G_T^{-1}R_T^{-1}) (R_Th_T),
	\end{equation*}
where $R_T=\text{diag}(e^{\frac{bT}{2}},e^{\frac{3bT}{2}},\mathbf{I}_n \otimes \text{diag}(e^{\frac{bT}{2}},e^{\frac{3bT}{2}},e^{\frac{(b+2\lambda_{\min}(\theta))T}{2}}\mathbf{I}_n))\in\mathcal{M}_{d^2+1}$. Note that by \cite[page 19]{Dahbi}, there exist random variables $\mathrm{C}_1,\mathrm{C}_2(\theta),\ldots,\mathrm{C}_d(\theta)$ such that $e^{bT}Z_T^1 \stackrel{a.s}{\longrightarrow} \mathrm{C}_1$ and $e^{\lambda_{\min}(\theta)T}Z_T^i\stackrel{a.s}{\longrightarrow} \mathrm{C}_i(\theta)$, as $T\to\infty$, for all $i\in J=\lbrace 2,\ldots,d\rbrace$. In a similar way as in \cite[page 19]{Dahbi},  using the Kronecker lemma (see Lemma 5.1 in \cite{Dahbi}),  it easy to check, for all $i,j\in J$ and $k,\ell_i,\ell_j\in\N$, that  
	\begin{equation}
	\begin{aligned}
		e^{(kb+(\ell_i+\ell_j)\lambda_{\min}(\theta)) T}\displaystyle\int_0^T (Z_s^1)^k (Z_s^i)^{\ell_i}(Z_s^j)^{\ell_j} \mathrm{~d}s\stackrel{a.s.}{\longrightarrow} -\dfrac{\mathrm{C}_1^k \mathrm{C}_i^{\ell_i}(\theta)\mathrm{C}_j^{\ell_j}(\theta)}{kb+(\ell_i+\ell_j)\lambda_{\min}(\theta)},\quad\text{ as}\;\; T\to\infty.
	\end{aligned}
\label{convergence exp int Y X X}
\end{equation}
Hence, using relation \eqref{convergence exp int Y X X} and the explicit form of the quadratic variation of the martingale $h_T$ given by 
$\langle\, h\,\rangle_T=\mathrm{H}_T=\begin{bmatrix}
	\mathrm{H}_T^{(1)}&\mathrm{H}_T^{(2)}\\
	{\mathrm{H}_T^{(2)}}^{\mathsf{T}}&\mathrm{H}_T^{(3)}
\end{bmatrix}$ introduced in the proof of Theorem \ref{consistency CLSE b>0}, it is easy to check by simple calculation that $R_T \langle \,h\,\rangle_T R_T^{\mathsf{T}}\stackrel{a.s.}{\longrightarrow}\eta\eta^{\mathsf{T}}$. Therefore, thanks to Theorem \ref{CLT Van Zanten} (see Appendix B), we deduce that for each random matrix $A$, we have $(R_T h_T,A) \stackrel{\mathcal{D}}{\longrightarrow}(\eta\xi,A)$, where $\xi$ is a $d^2+1$-dimensional standard normally distributed random vector independent of $(\eta,A)$. Furthermore, it is easy to check that 
$$
	Q_T G_T^{-1}R_T^{-1}=\left(\text{diag}
		(\mathcal{J}_T^{(1)},\mathbf{I}_n\otimes \mathcal{J}_T^{(2)})\right)^{-1}:=\mathcal{J}_T^{-1},
\text{ where } \mathcal{J}_T^{(1)}=\begin{bmatrix}
1&-e^{bT}\displaystyle\int_0^TY_s\mathrm{~d}s\\[7pt]
-\dfrac{e^{bT}}{T}\displaystyle\int_0^TY_s\mathrm{~d}s&e^{2bT}\displaystyle\int_0^TY_s^2\mathrm{~d}s
\end{bmatrix}$$ $$\text{ and }\mathcal{J}_T^{(2)}=\begin{bmatrix}
1&-e^{bT}\displaystyle\int_0^TY_s\mathrm{~d}s&-e^{\lambda_{\min}(\theta)T}\displaystyle\int_0^TX_s^{\mathsf{T}}\mathrm{~d}s\\[7pt]
-\dfrac{e^{bT}}{T}\displaystyle\int_0^TY_s\mathrm{~d}s&e^{2bT}\displaystyle\int_0^TY_s^2\mathrm{~d}s&e^{(b+\lambda_{\min}(\theta))T}\displaystyle\int_0^TY_sX_s^{\mathsf{T}}\mathrm{~d}s\\[7pt]
-\dfrac{e^{\lambda_{\min}(\theta)T}}{T}\displaystyle\int_0^TX_s\mathrm{~d}s&e^{(b+\lambda_{\min}(\theta))T}\displaystyle\int_0^TY_sX_s\mathrm{~d}s&e^{2\lambda_{\min}(\theta)T}\displaystyle\int_0^TX_sX_s^{\mathsf{T}}\mathrm{~d}s
\end{bmatrix}.$$
Consequently, by relation \eqref{convergence exp int Y X X}, we deduce that $Q_T G_T^{-1}R_T^{-1}\stackrel{\mathbb{P}}{\longrightarrow}\left(\text{diag}(V_1,\mathbf{I}_n\otimes V_2)\right)^{-1}=V^{-1}$, as $T\to\infty$, where $V_1$ and $V_2$ are the a.s. limits of $\mathcal{J}_T^{(1)}$ and $\mathcal{J}_T^{(2)}$, respectively. In addition, the matrix $V$ is almost surely invertible since, on the one hand, we have $\det(V_1)=-\dfrac{\mathrm{C}_1^2}{2b}>0,$ almost surely and on the other hand, we have $\det(V_2)=\det(\tilde{V}_2)$, where $$\tilde{V}_2:=\begin{bmatrix}
-\dfrac{\mathrm{C}_1^2}{2b}&-\dfrac{\mathrm{C}_1}{b+\lambda_{\min}(\theta)}\mathrm{C}_J^{\mathsf{T}}(\theta)\\
-\dfrac{\mathrm{C}_1}{b+\lambda_{\min}(\theta)}\mathrm{C}_J(\theta)&-\dfrac{1}{2\lambda_{\min}(\theta)}\mathrm{C}_J(\theta) \mathrm{C}_J^{\mathsf{T}}(\theta)
\end{bmatrix}$$ and for all $z\in\R^d\setminus\lbrace \mathbf{0}_d\rbrace$, we get
$z^{\mathsf{T}}
 \tilde{V}_2 z=-z_1^2\dfrac{\mathrm{C}_1^2}{2b}-2z_1\dfrac{\mathrm{C}_1}{b+\lambda_{\min}(\theta)}z_J^{\mathsf{T}}\mathrm{C}_J(\theta)-\dfrac{1}{2\lambda_{\min}(\theta)}\left(z_J^{\mathsf{T}}\mathrm{C}_J(\theta)\right)^2$.
Hence, using the fact that $\lambda_{\min}(\theta)<b$, we deduce that $z^{\mathsf{T}}\tilde{V_2}z>-\dfrac{1}{b+\lambda_{\min}(\theta)}\left(z_1\mathrm{C}_1+z_J^{\mathsf{T}}\mathrm{C}_J\right)^2>0$, almost surely, thus we obtain $\det(V)=\det(V_1)\times(\det(V_2))^n>0$ almost surely. Finally, using $(R_T h_T,A) \stackrel{\mathcal{D}}{\longrightarrow}(\eta\xi,A)$ for all random matrix $A$, it is sufficient to take $A=V$ and the proof is completed. 
\end{proof}
\begin{remarque}
  Thanks to Theorem \ref{normality theorem supercritical case}, we can mention that in the special supercritical case, the deduced CLSE estimators $\hat{a}_T$ and $\hat{m}_T$ are not even weakly consistent and that their error terms have an exponential expansion. However, we have the strong consistency of $\hat{b}_T$ and the weak consistency of $(\hat{\kappa}_T,\hat{\theta}_T)$.    
\end{remarque}
\section{CLSE based on discrete time observations}
We consider $(Y_t,X_t)_{t\in\R_+}$ the $\mathit{AD}(1,n)$ process strong solution of the SDE $\eqref{model01}$ with random initial value $(Y_0,X_0)^{\mathsf{T}}$ independent of $(B_t)_{t\in\R_+}$ satisfying $\mathbb{P}(Y_0\in\R_{++})=1$. The following discussion is about the construction of a CLSE for the drift parameter $\tau$ based on discrete time observations $(Y_{t_k},X_{t_k})_{k\in\lbrace 0,1,\ldots, N\rbrace}$, for $N\in\N\setminus\lbrace 0\rbrace$, with $t_k=k\Delta_N$ such that $\Delta_N\to 0$ and $N\Delta_N=t_N\to\infty$, as $N\to\infty$. The construction of the discrete CLSE follows the same steps as those in the continuous setting (see section 3). Namely, we consider first the following extremum problem
\begin{equation}
	\begin{aligned}
		\underset{\tau\in \R^{d^2+1}}{\arg\min}\displaystyle\sum_{k=1}^{N}\left[\left(Y_{t_k}-\mathbb{E}\left( Y_{t_k}\vert \mathcal{F}_{t_{k-1}}\right)\right)^2+\left(X_{t_k}-\mathbb{E}\left( X_{t_k}\vert \mathcal{F}_{t_{k-1}}\right)\right)^{\mathsf{T}}\left(X_{t_k}-\mathbb{E}\left(X_{t_k}\vert \mathcal{F}_{t_{k-1}}\right)\right)\right]
	\end{aligned}
	\label{extremum problem discrete}
\end{equation}
and using the explicit expressions of conditional expectations with some Taylor expansions, we introduce the  CLSE estimator $\check{\tau}_{t_N}^{\Delta_N}=(\check{a}_{t_N}^{\Delta_N},\check{b}_{t_N}^{\Delta_N},\text{vec}([\check{m}_{t_N}^{\Delta_N},\check{\kappa}_{t_N}^{\Delta_N},\check{\theta}_{t_N}^{\Delta_N}]^{\mathsf{T}}))^{\mathsf{T}}$ equal to
\begin{multline*}
		\dfrac{1}{\Delta_N}\;\underset{\check{\tau}\in \R^{d^2+1}}{\arg\min}\displaystyle\sum_{k=1}^{N}\left[\left(Y_{t_k}-Y_{t_{k-1}}-\left( \check{a}- \check{b}Y_{t_{k-1}} \right)\right)^2+\left(X_{t_k}-X_{t_{k-1}}-\left( \check{m} -Y_{t_{k-1}} \check{\kappa}-\check{\theta} X_{t_{k-1}}  \right)\right)^{\mathsf{T}}\right.\\
		 \left.\times\left(X_{t_k}-X_{t_{k-1}}-\left( \check{m} -Y_{t_{k-1}} \check{\kappa}-\check{\theta} X_{t_{k-1}}  \right)\right)\right],
	\end{multline*}
with 
$\check{\tau}:=(\check{a},\check{b},\text{vec}([\check{m},\check{\kappa},\check{\theta}]^{\mathsf{T}}))^{\mathsf{T}}$. Then, it can be observed that, on the one hand, we have
\begin{equation}
(\check{a}_{t_N}^{\Delta_N},\check{b}_{t_N}^{\Delta_N})^{\mathsf{T}}=\dfrac{1}{\Delta_N}\underset{(\check{a},\check{b})^{\mathsf{T}}\in \R^2}{\arg\min}\displaystyle\sum_{k=1}^{N}\left(Y_{t_k}-Y_{t_{k-1}}-\left( \check{a}- \check{b}Y_{t_{k-1}} \right)\right)^2,
	\label{c d discrete}
\end{equation}
and on the other hand, we have
\begin{equation}
	\begin{aligned}
		\text{vec}([\check{m}_{t_N}^{\Delta_N},\check{\kappa}_{t_N}^{\Delta_N},\check{\theta}_{t_N}^{\Delta_N}]^{\mathsf{T}})=\dfrac{1}{\Delta_N}\;\underset{\text{vec}([{\check{m}},{\check{\kappa}},{\check{\theta}}]^{\mathsf{T}})\in \R^{d^2-1}}{\arg\min}\displaystyle\sum_{k=1}^{N}&\left(X_{t_k}-X_{t_{k-1}}-\left(\check{m} -Y_{t_{k-1}} \check{\kappa}-\check{\theta} X_{t_{k-1}}  \right)\right)^{\mathsf{T}}\\
		&\times\left(X_{t_k}-X_{t_{k-1}}-\left(\check{m} -Y_{t_{k-1}} \check{\kappa}-\check{\theta} X_{t_{k-1}}  \right)\right).
	\end{aligned}
	\label{delta varepsilon zeta discrete}
\end{equation}
Hence, in order solve the extremum problem, we proceed with similar calculation as in the previous section (see relations \eqref{derivative 1 equals zero} and \eqref{derivative 2 equals 0}) and we obtain
\begin{equation*}
	\left(\check{a}_{t_N}^{\Delta_N},\check{b}_{t_N}^{\Delta_N}
	\right)^{\mathsf{T}}=\left(\Delta_N\check{\Gamma}^{(1)}_{t_N,\Delta_N}\right)^{-1}\check{\phi}^{(1)}_{t_N,\Delta_N}.
\end{equation*}
and
\begin{equation*}
	\left(\check{m}_{t_N}^{\Delta_N},\check{\kappa}_{t_N}^{\Delta_N},\check{\theta}_{t_N}^{\Delta_N}
\right)^{\mathsf{T}}=\left(\Delta_N\check{\Gamma}^{(2)}_{t_N,\Delta_N}\right)^{-1}\check{\phi}^{(2)}_{t_N,\Delta_N},
\end{equation*}
where 
			\begin{equation*}
				\check{\Gamma}^{(1)}_{t_N,\Delta_N}=\begin{bmatrix}
					N& -\displaystyle\sum_{k=1}^{N}Y_{t_{k-1}}\\
					-\displaystyle\sum_{k=1}^{N}Y_{t_{k-1}}& \displaystyle\sum_{k=1}^{N}Y_{t_{k-1}}^2
				\end{bmatrix},\quad \check{\phi}^{(1)}_{t_N,\Delta_N}=\begin{bmatrix}
					Y_{t_N}-Y_{0}\\
					-\displaystyle\sum_{k=1}^{N} \left(Y_{t_k}-Y_{t_{k-1}}\right)Y_{t_{k-1}}
				\end{bmatrix}
			\end{equation*}
   and
   	\begin{equation*}
				\check{\Gamma}^{(2)}_{t_N,\Delta_N}=\begin{bmatrix}
					N & -\displaystyle\sum_{k=1}^{N}Y_{t_{k-1}}  &-\displaystyle\sum_{k=1}^{N}X_{t_{k-1}}^{\mathsf{T}}\\
					-\displaystyle\sum_{k=1}^{N}Y_{t_{k-1}} & \displaystyle\sum_{k=1}^{N}Y_{t_{k-1}}^2 & \displaystyle\sum_{k=1}^{N} Y_{t_{k-1}}X_{t_{k-1}}^{\mathsf{T}}\\
					-\displaystyle\sum_{k=1}^{N}X_{t_{k-1}} &  \displaystyle\sum_{k=1}^{N} Y_{t_{k-1}}X_{t_{k-1}} & \displaystyle\sum_{k=1}^{N} X_{t_{k-1}}X_{t_{k-1}}^{\mathsf{T}}
				\end{bmatrix},\quad
				\check{\phi}^{(2)}_{t_N,\Delta_N}=\begin{bmatrix}
					X_{t_N}^{\mathsf{T}}-X_{0}^{\mathsf{T}}\\
					-\displaystyle\sum_{k=1}^{N} Y_{t_{k-1}}\left(X_{t_k}^{\mathsf{T}}-X_{t_{k-1}}^{\mathsf{T}}\right)\\
					-\displaystyle\sum_{k=1}^{N}X_{t_{k-1}}\left(X_{t_k}^{\mathsf{T}}-X_{t_{k-1}}^{\mathsf{T}}\right)
				\end{bmatrix}.
			\end{equation*}
Consequently, the CLSE is given by
\begin{equation}
\check{\tau}_{t_N}^{\Delta_N}=\begin{bmatrix}
	\check{a}_{t_N}^{\Delta_N}\\[4pt]
	\check{b}_{t_N}^{\Delta_N}\\[4pt]
	\text{vec} \begin{bmatrix}
		\left(\check{m}_{t_N}^{\Delta_N}\right)^\mathsf{T}\\ \left(\check{\kappa}_{t_N}^{\Delta_N}\right)^\mathsf{T}\\ \left(\check{\theta}_{t_N}^{\Delta_N}\right)^\mathsf{T}
\end{bmatrix} \end{bmatrix}=\begin{bmatrix}\left(\Delta_N\check{\Gamma}_{t_N,\Delta_N}^{(1)}\right)^{-1}\check{\phi}_{t_N,\Delta_N}^{(1)}\\[4pt]
		(I_{n}\otimes \Delta_N\check{\Gamma}_{t_N,\Delta_N}^{(2)})^{-1}\text{vec}(\check{\phi}_{t_N,\Delta_N}^{(2)})
	\end{bmatrix}=\left(\Delta_N\check{\Gamma}_{t_N,\Delta_N}\right)^{-1}\check{\phi}_{t_N,\Delta_N},
 \label{estimator discrete}
\end{equation}
where $\check{\Gamma}_{t_N,\Delta_N}:=\begin{bmatrix}
	\check{\Gamma}_{t_N,\Delta_N}^{(1)}& \textbf{0}\\
	\textbf{0}& I_{n}\otimes \check{\Gamma}_{t_N,\Delta_N}^{(2)}
\end{bmatrix}$ and $\check{\phi}_{t_N,\Delta_N}:=\begin{bmatrix}
\check{\phi}_{t_N,\Delta_N}^{(1)}\\
\text{vec}(\check{\phi}_{t_N,\Delta_N}^{(2)})
\end{bmatrix}$.

Hence, the error term relative to the discrete CLSE can be written as follows 
\begin{equation}\label{discrete-continuous+continuous-exact}
    \check{\tau}_{t_N}^{\Delta_N}-\tau=(\check{\tau}_{t_N}^{\Delta_N}-\hat{\tau}_{t_N})+(\hat{\tau}_{t_N}-\tau).
\end{equation}
	Since we have already treated in section \ref{Continuous section} the asymptotic behavior of $\hat{\tau}_{t_N}-\tau$, as $N\to \infty$, in the subcritical, special critical and special supercritical cases, then in order to study the asymptotic properties of discrete CLSE $\check{\tau}_{t_N}^{\Delta_N}$, we consider only the first term  $\check{\tau}_{t_N}^{\Delta_N}-\hat{\tau}_{t_N}$, as $N\to\infty$. More precisely, we will prove that this term normalized with the correspondent rate of convergence tends to zero in probability. In order to that, we need to establish some probabilistic results on the model.   
 
\subsection{Stochastic analysis for the $\mathit{AD}(1,n)$ process}
Next, we give a technical proposition about the moment of $Z_t-Z_s$ when $t-s\in(0,1)$ in the subcritical, special critical and special supercritical cases which will be used later for the proofs of propositions \ref{Thm lien entre intégrale et somme pour b>0}, \ref{Thm relation sum with integral SCC} and \ref{Thm relation sum with integral SSC}. In the following $C$ denotes a positive constant independent on $t$ and $s$ that may change its value from line to line.
\begin{lemme}
Let $s\in[0,t]$ such that $t-s\in(0,1)$. Then, 
\begin{itemize}
\item 	In the subcritical case $b\in\R_{++}$ and $\theta$ is a positive definite matrix, for all $q\geq 1$, we have

	\begin{equation}
		\mathbb{E}\left(\norm{Z_t-Z_s}_1^q\right)\leq C(t-s)^{\frac{q}{2}}.
		\label{upperbound expectation b>0}
	\end{equation}

\item In the special critical case $b=0$, $\kappa=\mathbf{0}_n$ $\theta=\mathbf{0}_{n,n}$, we have
\begin{equation}
	\mathbb{E}\left(\norm{Z_t-Z_s}_1^q\right)\leq C(t-s)^{\frac{q}{2}}(1+t^{\frac{q}{2}}).
	\label{int Zs - sum Zk SCC}
\end{equation}
\item In the special supercritical case $b\in\R_{--}$ and $\lambda_{\max}(\theta)<b$, for all $q\geq 1$, we have
\begin{equation}\label{upperbound expectation SSC}
\mathbb{E}\left((Y_t-Y_s)^q\right)\leq C(t-s)^{\frac{q}{2}}e^{-qbt}\quad\text{and}\quad    \mathbb{E}\left(\norm{X_t-X_s}_1^q\right)\leq C(t-s)^{\frac{q}{2}}e^{-q\lambda_{\min}(\theta)t}
\end{equation}
\end{itemize}
\end{lemme}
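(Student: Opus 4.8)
The plan is to treat the increment $Z_t-Z_s=(Y_t-Y_s,\,X_t-X_s)^{\mathsf{T}}$ componentwise, splitting each component via the SDE \eqref{model01} into a finite-variation (drift) part and a stochastic-integral (martingale) part, and bounding the $q$-th absolute moment of each part separately with Minkowski's inequality. Concretely, for $s\in[0,t]$,
\[
Y_t-Y_s=\int_s^t (a-bY_u)\,\mathrm{d}u+\rho_{11}\int_s^t \sqrt{Y_u}\,\mathrm{d}B_u^1,\qquad X_t-X_s=\int_s^t (m-\kappa Y_u-\theta X_u)\,\mathrm{d}u+\int_s^t \sqrt{Y_u}\tilde{\rho}\,\mathrm{d}B_u.
\]
For each drift part I would apply Jensen's (or H\"older's) inequality to the time integral, e.g. $\mathbb{E}(\abs{\int_s^t(a-bY_u)\mathrm{d}u}^q)\le(t-s)^{q-1}\int_s^t\mathbb{E}(\abs{a-bY_u}^q)\,\mathrm{d}u$, together with the elementary bound $\abs{a-bY_u}^q\le 2^{q-1}(a^q+\abs{b}^qY_u^q)$; for each stochastic-integral part the Burkholder--Davis--Gundy inequality gives $\mathbb{E}(\abs{\int_s^t\sqrt{Y_u}\,\mathrm{d}B_u^1}^q)\le C\,\mathbb{E}((\int_s^t Y_u\,\mathrm{d}u)^{q/2})$, which is then controlled by $\int_s^t\mathbb{E}(Y_u^{q/2})\,\mathrm{d}u$ after another Jensen step for $q\ge2$ (the range $1\le q<2$ being reduced to $q=2$ by concavity of $x\mapsto x^{q/2}$). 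The decisive input in all three cases is the time-dependence of the moments $\mathbb{E}(Y_u^p)$ and $\mathbb{E}(\norm{X_u}_1^p)$, supplied by the moment results of the appendix (Remark \ref{Remark moment resulty on time t}).

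In the subcritical case these moments are bounded uniformly in $u$ by ergodicity (Remark \ref{Remark moment à l'infini}), so every $\mathbb{E}(Y_u^{q/2})$, $\mathbb{E}(\norm{X_u}_1^q)$ is $\le C$: the drift contributes $C(t-s)^q$ and the martingale part $C(t-s)^{q/2}$, and since $t-s\in(0,1)$ we have $(t-s)^q\le(t-s)^{q/2}$, yielding \eqref{upperbound expectation b>0}. In the special critical case $b=0$, $\kappa=\mathbf{0}_n$, $\theta=\mathbf{0}_{n,n}$ the drift of $Y$ reduces to $a$ and that of $X$ to $m$, so the drift terms again give $C(t-s)^q$; the moments now grow only polynomially, $\mathbb{E}(Y_u^{q/2})=O(u^{q/2})=O(t^{q/2})$ on $[s,t]\subseteq[0,t]$, so the martingale part is bounded by $C(t-s)^{q/2-1}\int_s^t u^{q/2}\,\mathrm{d}u\le C(t-s)^{q/2}t^{q/2}$, which combines with the drift to give \eqref{int Zs - sum Zk SCC}.

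The supercritical case is the most delicate and is where I expect the main work. Here the moments grow exponentially: from $e^{bu}Y_u\to\mathrm{C}_1$ and $e^{\lambda_{\min}(\theta)u}X_u\to\mathrm{C}_J(\theta)$, together with the associated $L^p$-bounds, one obtains $\mathbb{E}(Y_u^p)=O(e^{-pbu})$ and $\mathbb{E}(\norm{X_u}_1^p)=O(e^{-p\lambda_{\min}(\theta)u})$. The point to keep track of is the ordering $\lambda_{\min}(\theta)\le\lambda_{\max}(\theta)<b<0$, which forces $-\lambda_{\min}(\theta)>-b>0$, so that in the $X$-increment the factor $e^{-\lambda_{\min}(\theta)u}$ dominates $e^{-bu}$, while in the $Y$-increment only $e^{-bu}$ appears (note that the $X$-martingale integrand $\sqrt{Y_u}\tilde{\rho}$ involves $Y$ alone). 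Since the relevant exponentials are increasing in $u$, each integral $\int_s^t e^{-qbu}\,\mathrm{d}u$ (respectively $\int_s^t e^{-q\lambda_{\min}(\theta)u}\,\mathrm{d}u$) is bounded by $(t-s)$ times its value at $u=t$; combined with $t-s\in(0,1)$ and the elementary comparisons $e^{-qbt/2}\le e^{-qbt}$ and $e^{-qbt/2}\le e^{-q\lambda_{\min}(\theta)t}$ (both valid because $b<0$ and $\lambda_{\min}(\theta)<b$), the drift and martingale contributions collapse to $C(t-s)^{q/2}e^{-qbt}$ for $Y_t-Y_s$ and $C(t-s)^{q/2}e^{-q\lambda_{\min}(\theta)t}$ for $X_t-X_s$, establishing \eqref{upperbound expectation SSC}. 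The principal obstacle is precisely this bookkeeping of competing exponential rates: one must verify that the fastest-growing moment term produces exactly the exponent stated and that the $e^{-qbt/2}$ coming from the quadratic variation is absorbed by the stated (larger) exponential, rather than a weaker bound.
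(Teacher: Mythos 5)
Your proof is correct and reaches all three bounds with the right exponents, but your decomposition differs from the paper's. You split each increment directly from the integrated SDE into $\int_s^t(\text{drift})\,\mathrm{d}u$ plus $\int_s^t\sqrt{Y_u}\,(\cdot)\,\mathrm{d}B_u$ and estimate both pieces with Jensen/H\"older and Burkholder--Davis--Gundy; the paper instead works from the variation-of-constants representation \eqref{Xt Xs expression}, so that $X_t-X_s=(I_n-e^{-\theta(t-s)})(\theta^{-1}m-X_s)-\int_s^tY_ue^{-\theta(t-u)}\kappa\,\mathrm{d}u+\int_s^t\sqrt{Y_u}e^{-\theta(t-u)}\tilde\rho\,\mathrm{d}B_u$ (and similarly $Y_t-Y_s$ via $e^{bt}Y_t$ in the supercritical case), bounding the first term through $1-e^{-x}\le x$, and it simply cites Propositions 4, 5 and 3 of \cite{Alaya} for the CIR component rather than re-deriving those bounds. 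The two routes consume exactly the same inputs --- the moment estimates of Proposition \ref{moment result on time t} (uniform boundedness in the subcritical case, $\mathbb{E}(Y_u^p)=O(1+u^p)$ in the critical case, $\mathbb{E}(Y_u^p)=O(e^{-pbu})$ and $\mathbb{E}(\norm{X_u}_1^p)=O(e^{-p\lambda_{\min}(\theta)u})$ in the supercritical case), the case split $1\le q<2$ versus $q\ge2$ for the BDG step, and the absorptions $(t-s)^q\le(t-s)^{q/2}$, $e^{-qbt/2}\le e^{-qbt}$ and $e^{-qbt/2}\le e^{-q\lambda_{\min}(\theta)t}$ coming from $t-s\in(0,1)$ and $\lambda_{\min}(\theta)<b<0$. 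Your version is marginally more self-contained and avoids manipulating matrix exponentials; the paper's version localizes the $\theta$-dependence into explicit factors $e^{-\theta(t-u)}$, which makes the eigenvalue bookkeeping in the supercritical case slightly more transparent. Your exponential comparisons in the last case are all verified correctly, so there is no gap.
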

\begin{proof}
 First, by the power mean inequality and \cite[Proposition 4]{Alaya}, we have
   \begin{equation*}
   \begin{aligned}
   	 	\label{E(Z) decomposition}\mathbb{E}\left(\norm{Z_t-Z_s}_1^q\right)&\leq 2^{q-1} 	\mathbb{E}\left(\vert Y_t-Y_s\vert ^q\right)+2^{q-1} 	\mathbb{E}\left(\norm{X_t-X_s}_1^q\right)\\
   	 	&\leq C (t-s)^{\frac{q}{2}}+2^{q-1} 	\mathbb{E}\left(\norm{X_t-X_s}_1^q\right).
      \end{aligned}
   \end{equation*}
Using the relation \eqref{Xt Xs expression}, we obtain
\begin{equation*}
	X_t-X_s=(I_n-e^{-\theta(t-s)})(\theta^{-1}m-X_s)-\int_s^tY_ue^{-\theta(t-u)}\kappa\mathrm{~d}u+\int_s^t\sqrt{Y_u}e^{-\theta(t-u)}\tilde{\rho}\mathrm{~d}B_u.
\end{equation*}
Hence, by the power mean inequality and the norm inequalities $\norm{Ax}_1\leq\norm{A}_1\norm{x}_1$ and $\norm{A}_1\leq\sqrt{n}\norm{A}_2$, for all $A\in\mathcal{M}_n$ and $x\in\R^n$, there exists a positive constant $C$ such that
\begin{align*}
   \mathbb{E}\left(\norm{X_t-X_s}_1^q\right)&\leq C\left((1-e^{-\lambda_{\max}(\theta)(t-s)})^q\,\mathbb{E}(\norm{\theta^{-1}m-X_s}_1^q)+\mathbb{E}\left(\norm{\int_{s}^{t}Y_ue^{-\theta(t-u)}\kappa\mathrm{~d}u}_1^q\right)\right.\\
   &\quad \left.+\mathbb{E}\left(\norm{\int_s^t \sqrt{Y_u} e^{-\theta(t-u)}\tilde{\rho}\mathrm{~d}B_u}_1^q\right)\right).
\end{align*}
Using the following relation $1-e^{-x}\leq x$, for all $x\in\R$ and the fact that $\sup\limits_{t\in\R_+}\mathbb{E}(\vert X^i_t\vert^q)$ is finite by Proposition \ref{moment result on time t}, for all $i\in\lbrace1,\ldots,n\rbrace$, we conclude that the first term in the right-hand side of the above equation is bounded by $C(t-s)^q$. For the second term, using the same arguments then the Hölder inequality on the integral and Fubini's theorem, since we have $\sup\limits_{t\in\R_+}\mathbb{E}(Y_t^q)<\infty$, we get
\begin{align*}
	\mathbb{E}\left(\norm{\int_{s}^{t}Y_ue^{-\theta(t-u)}\kappa\mathrm{~d}u}_1^q\right)&\leq C (t-s)^{q-1} \left(\int_{s}^{t}\mathbb{E}(Y_u^q)\,e^{-q\lambda_{\min}(\theta)(t-u)}\mathrm{~d}u\right)\leq C (t-s)^q.
\end{align*}
For the last term, we start with the Burkholder-Davis-Gundy inequality to deduce similarly that
\begin{align*}	\mathbb{E}\left(\norm{\int_s^t\sqrt{Y_u} e^{-\theta(t-u)}\tilde{\rho}\mathrm{~d}B_u}_1^q\right)&\leq C\,\mathbb{E}\left(\norm{\int_s^tY_u e^{-\theta(t-u)}\tilde{\rho}\tilde{\rho}^{\mathsf{T}}e^{-\theta^{\mathsf{T}}(t-u)}\mathrm{~d}u}_1^{\frac{q}{2}}\right)\\
	&\leq C\, \mathbb{E}\left(\left(\int_s^tY_u e^{-2\lambda_{\min}(\theta)(t-u)} \mathrm{~d}u\right)^{\frac{q}{2}}\right).
\end{align*}
In addition, similarly as in the proof of \cite[Proposition 4]{Alaya}, in order to give an upper-bound for the above expectation, we need to distinguish the two cases $1\leq q< 2$ and $q\geq 2$ where we apply the Hölder inequality on the expectation and on the integral, respectively. In the both cases, we get 
\begin{align*}	\mathbb{E}\left(\norm{\int_s^t\sqrt{Y_u} e^{-\theta(t-u)}\tilde{\rho}\mathrm{~d}B_u}_1^q\right)&\leq C(t-s)^{\frac{q}{2}},
\end{align*}
which completes the first assertion. For the second assertion, thanks to \cite[Proposition 5]{Alaya} and to the first inequality in \eqref{E(Z) decomposition}, it is enough to prove the property \eqref{int Zs - sum Zk SCC} for the process $X$. First, we write
	\begin{equation*}
		X_t-X_s=(t-s)m+\int_s^t\sqrt{Y_u}\tilde{\rho}\mathrm{~d}B_u.
	\end{equation*}
and we get
$\mathbb{E}(\norm{X_t-X_s}_1^q)\leq C\left((t-s)^q+\mathbb{E}\left(\norm{\int_s^t\sqrt{Y_u}\tilde{\rho}\mathrm{~d}B_u}_1^q\right)\right)$ then we apply the Burkholder-Davis-Gundy inequality to obtain
$\mathbb{E}\left(\norm{\int_s^t\sqrt{Y_u}\tilde{\rho}\mathrm{~d}B_u}_1^q\right)\leq C\,\mathbb{E}\left(\left(\int_s^t Y_u\mathrm{~d}u\right)^{\frac{q}{2}}\right)$.
Consequently, for $q\geq 2$, by applying the Hölder inequality on the integral and the first assertion of Proposition 3 in \cite{Alaya}, we obtain
\begin{equation*}
	\mathbb{E}\left(\norm{\int_s^t\sqrt{Y_u}\tilde{\rho}\mathrm{~d}B_u}_1^q\right)\leq C(t-s)^{\frac{q}{2}}\sup\limits_{s\leq u\leq t}\mathbb{E}(Y_u^{\frac{q}{2}})\leq C(t-s)^{\frac{q}{2}}(1+t^{\frac{q}{2}}).
\end{equation*}
For $1\leq q< 2$, we apply the Hölder inequality on the expectation with $\mathbb{E}(Y_u)=au+\mathbb{E}(Y_0)$ and we obtain
\begin{equation*}
	\mathbb{E}\left(\norm{\int_s^t\sqrt{Y_u}\tilde{\rho}\mathrm{~d}B_u}_1^q\right)\leq C\left(\int_s^t \mathbb{E}(Y_u)\mathrm{d}u\right)^{\frac{q}{2}}\leq C(t-s)^{\frac{q}{2}}(at+\mathbb{E}(Y_0))^{\frac{q}{2}}.
\end{equation*}
For the last assertion, let us recall first that
$$Y_{t}-Y_s=\dfrac{a}{b}\left(1-e^{-b(t-s)}\right)-Y_s\left(1-e^{-b(t-s)}\right)+\sigma_1e^{-bt}\displaystyle\int_s^{t}e^{bu}\sqrt{Y_u}\mathrm{d}B_u^1.$$
Hence, for all $q\geq 1$, by similar arguments as before and using the fact that $\mathbb{E}(Y_s^q)\leq C e^{-qbs}$ and $\mathbb{E}(\norm{X_s}_1^q)\leq C e^{-q\lambda_{\min}(\theta)s}$ when $\lambda_{\max}(\theta)<b<0$ obtained easily by induction from Proposition \ref{moment result on time t}, we get
\begin{align*}
    \mathbb{E}\left((Y_{t}-Y_s)^q\right)&\leq C\left((t-s)^q+e^{-qbs}(t-s)^q+e^{-qbt}(t-s)^{\frac{q}{2}}\right).
\end{align*}
and
\begin{align*}
    \mathbb{E}\left(\norm{X_{t}-X_s}_1^q\right)&\leq C \left((t-s)^q+(t-s)^qe^{-q\lambda_{\min}(\theta)s}+(t-s)^{\frac{q}{2}}e^{-q\lambda_{\min}(\theta)t}\right)
\end{align*}
which completes the proof.
\end{proof}
The task now is to give sufficient conditions on the frequency $\Delta_N$ in order to get the same asymptotic results obtained in the continuous setting in the subcritical, special critical and special super critical cases.
\begin{proposition}\label{Thm lien entre intégrale et somme pour b>0}
	For $b\in\R_{++}$ and $\theta$ positive definite matrix, we have
	\begin{equation*}
		\dfrac{1}{t_N}\int_0^{t_N}Z_s\mathrm{~d}s-\dfrac{1}{t_N}\sum_{k=1}^{N}\Delta_NZ_{t_{k-1}}\longrightarrow \mathbf{0}_d,
	\end{equation*}
\begin{equation*}
	\dfrac{1}{t_N}\int_0^{t_N}Z_sZ_s^{\mathsf{T}}\mathrm{~d}s-\dfrac{1}{t_N}\sum_{k=1}^{N}\Delta_NZ_{t_{k-1}}Z_{t_{k-1}}^{\mathsf{T}}\longrightarrow \mathbf{0}_{d,d},
\end{equation*}	
and if $N\Delta_N^2\to 0$, as $N\to\infty$, then we have
\begin{equation*}
		\dfrac{1}{\sqrt{t_N}}\int_0^{t_N}Z_s\mathrm{~d}Z_s^{\mathsf{T}}-\dfrac{1}{\sqrt{t_N}}\sum_{k=1}^{N}Z_{t_{k-1}}\left(Z_{t_{k}}-Z_{t_{k-1}}\right)^{\mathsf{T}}\longrightarrow \mathbf{0}_{d,d},
	\end{equation*}
	in probability, as $N\to\infty$.
Consequently, we get $\left(\frac{1}{t_N}G_{t_N}^{(i)},\frac{1}{\sqrt{t_N}}f_{t_N}^{(i)}\right)-\left(\frac{\Delta_N}{t_N}\Gamma_{t_N,\Delta_N}^{(i)},\frac{1}{\sqrt{t_N}}\check{\phi}_{t_N,\Delta_N}^{(i)}\right)$ tends in probability to zero as $N\to\infty$.
\end{proposition}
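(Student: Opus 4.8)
The plan is to prove each of the three convergences by showing that the relevant difference tends to zero in $L^1$ (or $L^2$), which then gives convergence in probability. The first two are routine consequences of the moment bound \eqref{upperbound expectation b>0} together with the uniform moment estimates of Proposition \ref{moment result on time t}; the third, which involves the stochastic integrals, is the genuine difficulty and is exactly where the hypothesis $N\Delta_N^2\to 0$ is used.

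For the first statement I would write the difference as a sum over subintervals,
\begin{equation*}
\int_0^{t_N}Z_s\,\mathrm{d}s-\sum_{k=1}^{N}\Delta_N Z_{t_{k-1}}=\sum_{k=1}^{N}\int_{t_{k-1}}^{t_k}\left(Z_s-Z_{t_{k-1}}\right)\mathrm{d}s,
\end{equation*}
take $\norm{\cdot}_1$ and expectations, and apply \eqref{upperbound expectation b>0} with $q=1$ (valid since $s-t_{k-1}\leq\Delta_N<1$ for $N$ large) to get $\mathbb{E}\norm{Z_s-Z_{t_{k-1}}}_1\leq C\Delta_N^{1/2}$. This produces an overall bound $C\,N\Delta_N^{3/2}=C\,t_N\Delta_N^{1/2}$, hence after dividing by $t_N$ a bound $C\Delta_N^{1/2}\to 0$. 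For the second statement I would use the identity $Z_sZ_s^{\mathsf{T}}-Z_{t_{k-1}}Z_{t_{k-1}}^{\mathsf{T}}=(Z_s-Z_{t_{k-1}})Z_s^{\mathsf{T}}+Z_{t_{k-1}}(Z_s-Z_{t_{k-1}})^{\mathsf{T}}$ and bound it by Cauchy--Schwarz, using \eqref{upperbound expectation b>0} with $q=2$ together with $\sup_{t}\mathbb{E}\norm{Z_t}_1^2<\infty$; the same computation again yields a factor $C\Delta_N^{1/2}$ after dividing by $t_N$.

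The third convergence is the main obstacle. Since $Z_{t_k}-Z_{t_{k-1}}=\int_{t_{k-1}}^{t_k}\mathrm{d}Z_s$, the difference equals $\int_0^{t_N}(Z_s-\bar Z_s)\,\mathrm{d}Z_s^{\mathsf{T}}$, where $\bar Z_s:=Z_{t_{k-1}}$ for $s\in[t_{k-1},t_k)$ is the left-point step process. I would then split $\mathrm{d}Z_s$ according to \eqref{model0 Z} into the drift part $\Lambda(Z_s)\tau\,\mathrm{d}s$ and the martingale part $\sqrt{Z_s^1}\rho\,\mathrm{d}B_s$. For the drift part, Cauchy--Schwarz with \eqref{upperbound expectation b>0} ($q=2$) and the uniform boundedness of the (affine) drift moments gives $\mathbb{E}\big(\norm{Z_s-\bar Z_s}_1\norm{\Lambda(Z_s)\tau}_1\big)\leq C\Delta_N^{1/2}$, so the drift contribution is at most $C\,t_N\Delta_N^{1/2}$ in $L^1$; after dividing by $\sqrt{t_N}$ this becomes $C\sqrt{t_N\Delta_N}=C\sqrt{N\Delta_N^{2}}$, which tends to zero precisely under $N\Delta_N^2\to 0$. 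For the martingale part I would apply the It\^o isometry entrywise to bound its second moment by $C\int_0^{t_N}\mathbb{E}\big(\norm{Z_s-\bar Z_s}_1^2\,Z_s^1\big)\,\mathrm{d}s$; Cauchy--Schwarz and \eqref{upperbound expectation b>0} with $q=4$ give $\mathbb{E}\big(\norm{Z_s-\bar Z_s}_1^2 Z_s^1\big)\leq C\Delta_N$, whence the normalised second moment is bounded by $\tfrac{1}{t_N}\,C\,t_N\Delta_N=C\Delta_N\to0$. Thus the martingale part already vanishes under $\Delta_N\to 0$, and only the drift part forces $N\Delta_N^2\to 0$.

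Finally, the stated consequence follows by matching entries. The deterministic entries of $\tfrac{1}{t_N}G^{(i)}_{t_N}$ and $\tfrac{\Delta_N}{t_N}\check\Gamma^{(i)}_{t_N,\Delta_N}$ coincide exactly (for instance $\tfrac{t_N}{t_N}=\tfrac{\Delta_N N}{t_N}=1$), while every remaining entry of $\tfrac{1}{t_N}G^{(i)}_{t_N}-\tfrac{\Delta_N}{t_N}\check\Gamma^{(i)}_{t_N,\Delta_N}$ is a coordinate of the first or second convergence, and every entry of $\tfrac{1}{\sqrt{t_N}}f^{(i)}_{t_N}-\tfrac{1}{\sqrt{t_N}}\check\phi^{(i)}_{t_N,\Delta_N}$ is either identical in both (the boundary terms $Y_{t_N}-Y_0$ and $X_{t_N}-X_0$) or a coordinate of the third convergence. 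Collecting these and invoking Slutsky's lemma completes the proof.
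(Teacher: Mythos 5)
Your proposal is correct and follows essentially the same route as the paper: the same subinterval decomposition with the moment bound \eqref{upperbound expectation b>0}, the same product decomposition plus Cauchy--Schwarz for the quadratic term, and the same drift/martingale splitting with It\^o isometry for the stochastic-integral term, correctly identifying that the drift contribution of order $\sqrt{N\Delta_N^2}$ is what forces the condition $N\Delta_N^2\to 0$ while the martingale part only needs $\Delta_N\to 0$. The only cosmetic difference is your asymmetric factorisation of $Z_sZ_s^{\mathsf{T}}-Z_{t_{k-1}}Z_{t_{k-1}}^{\mathsf{T}}$ versus the paper's symmetrised one, which changes nothing.
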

\begin{proof}
By the upper-bound inequality \eqref{upperbound expectation b>0}, on one hand, we get
	\begin{multline*}
		\dfrac{1}{t_N}\mathbb{E}\left(\norm{\int_0^{t_N}Z_s\mathrm{~d}s-\sum_{k=1}^{N}\Delta_NZ_{t_{k-1}}}_1\right)\leq    \dfrac{1}{t_N}\sum_{k=1}^{N}\int_{t_{k-1}}^{t_{k}}\mathbb{E}\left(\norm{Z_s-Z_{t_{k-1}}}_1\right)\mathrm{d}s\\
		\leq \dfrac{C}{t_N}\sum_{k=1}^{N}\int_{t_{k-1}}^{t_{k}}\sqrt{s-t_{k-1}}\,\mathrm{d}s\leq \dfrac{C}{t_N}\sum_{k=1}^{N}\Delta_N^{\frac{3}{2}}=C\sqrt{\Delta_N}\to0,\quad\text{as }N\to\infty.
	\end{multline*}
On the other hand, for $b\in\R{++}$, we have $\mathbb{E}\left(\norm{Z_t}_1^2\right)<\infty$, for all $t\in\R_+$. Hence, by a simple matrix decomposition and by the Cauchy-Schwarz inequalty, we get $\frac{1}{t_N}\mathbb{E}\left(\norm{\int_0^{t_N}Z_sZ_s^{\mathsf{T}}\mathrm{~d}s-\sum_{k=1}^{N}\Delta_NZ_{t_{k-1}}Z_{t_{k-1}}^{\mathsf{T}}}_1\right)$ is less than or equal to
\begin{multline}\label{int ZZ- sum ZZ b>0}
\begin{split}
\dfrac{1}{2t_N}\sum_{k=1}^{N}\int_{t_{k-1}}^{t_{k}}\left(\mathbb{E}\left(\norm{(Z_s-Z_{t_{k-1}})(Z_s^{\mathsf{T}}+Z_{t_{k-1}}^{\mathsf{T}})}_1\right)+\mathbb{E}\left(\norm{(Z_s+Z_{t_{k-1}})(Z_s^{\mathsf{T}}-Z_{t_{k-1}}^{\mathsf{T}})}_1\right)\right)\mathrm{d}s\\
			\leq \dfrac{C}{t_N}\sum_{k=1}^{N}\int_{t_{k-1}}^{t_{k}}\left(\mathbb{E}\left(\norm{Z_s-Z_{t_{k-1}}}_1^2\right)\right)^{\frac{1}{2}}\left( \mathbb{E}\left(\norm{Z_s}_1^2\right)+\mathbb{E}\left(\norm{Z_{t_{k-1}}}_1^2\right)\right)^{\frac{1}{2}}\mathrm{d}s \\
		\leq \dfrac{C}{t_N}\sum_{k=1}^{N}\int_{t_{k-1}}^{t_{k}}\sqrt{s-t_{k-1}}\,\mathrm{d}s\leq C\sqrt{\Delta_N}\to0, \quad\text{as }N\to\infty.    
  \end{split}		
\end{multline}
For the last assertion, using the dynamic of the process $Z$, classical properties of $\mathrm{L}^1$-norm and Cauchy-Schwarz inequality, we get
\begin{align*}
\mathbb{E}&\left(\norm{\dfrac{1}{\sqrt{t_N}}\int_0^{t_N}Z_s\mathrm{~d}Z_s^{\mathsf{T}}-\dfrac{1}{\sqrt{t_N}}\sum_{k=1}^{N}Z_{t_{k-1}}\left(Z_{t_{k}}-Z_{t_{k-1}}\right)^{\mathsf{T}}}_1\right)= \mathbb{E}\left(\norm{\dfrac{1}{\sqrt{t_N}}\sum_{k=1}^N\int_{t_{k-1}}^{t_k}(Z_s-Z_{t_{k-1}})dZ_s^{\mathsf{T}}}_1\right)\\
 &\leq \dfrac{C}{\sqrt{t_N}}\sum_{k=1}^N\mathbb{E}
  \left(\int_{t_{k-1}}^{t_k}\norm{Z_s-Z_{t_{k-1}}}_1\norm{\Lambda(Z_s)}_1\mathrm{d}s\right)+C\left(\dfrac{1}{t_N}\mathbb{E}\left(\norm{\sum_{k=1}^N\int_{t_{k-1}}^{t_k}\sqrt{Y_s}(Z_s-Z_{t_{k-1}})\mathrm{d}B_s^{\mathsf{T}}}^2_1\right)\right)^{\frac{1}{2}}.
\end{align*}
For the first term, by combining the definition of $\Lambda(Z_s)$ defined after equation \eqref{model0 Z} with Remark \ref{Remark moment resulty on time t} applied in the subcritical case, it is easy to check that $\underset{s\in\R_+}{\sup}\mathbb{E}\left(\norm{\Lambda(Z_s)}_1^2\right)$ is finite, then by Cauchy-Schwarz inequality and the property \ref{upperbound expectation b>0}, we deduce that it is bounded up to a constant by $\sqrt{N}\Delta_N$.
Next, thanks to the independence of stochastic integrals defined on distinct intervals $(t_{k-1},t_k)$, $k=1,\ldots,n$, the equivalence between the $\mathrm{L}^1$ and the Frobenius norms, Itô's isometry, Cauchy-Schwarz inequality and the property \eqref{upperbound expectation b>0}, we deduce that $\mathbb{E}\left(\frac{1}{t_N}\norm{\sum_{k=1}^N\int_{t_{k-1}}^{t_k}\sqrt{Y_s}(Z_s-Z_{t_{k-1}})\mathrm{d}B_s^{\mathsf{T}}}^2_1\right)$ is bounded by 
\begin{equation}\label{proof convergence 2}
 \dfrac{C}{t_N}\sum_{k=1}^N\sum_{i=1}^d\int_{t_{k-1}}^{t_k}\left(\mathbb{E}\left(Y_s^2\right)\right)^{\frac{1}{2}}\left(\mathbb{E}\left((Z^i_s-Z_{t_{k-1}}^i)^4\right)\right)^{\frac{1}{2}}\mathrm{d}s\leq C\Delta_N,
\end{equation}
which completes the proof of the propostion.
\end{proof}
\begin{proposition}\label{Thm relation sum with integral SCC}
		For $b=0$ and $\theta=\mathbf{0}_{n,n}$, we have
	\begin{equation*}
		\dfrac{1}{t_N^2}\int_0^{t_N}Z_s\mathrm{~d}s-\dfrac{1}{t_N^2}\sum_{k=1}^{N}\Delta_N Z_{t_{k-1}}\longrightarrow \mathbf{0}_d,
	\end{equation*}
	\begin{equation*}
		\dfrac{1}{t_N^3}\int_0^{t_N}Z_sZ_s^{\mathsf{T}}\mathrm{~d}s-\dfrac{1}{t_N^3}\sum_{k=1}^{N}\Delta_NZ_{t_{k-1}}Z_{t_{k-1}}^{\mathsf{T}}\longrightarrow \mathbf{0}_{d,d},
	\end{equation*}	
and if $N\Delta_N^{2}\to 0$, as $N\to\infty$, then we have
\begin{equation*}
		\dfrac{1}{t_N^2}\int_0^{t_N}Z_s\mathrm{~d}Z_s^{\mathsf{T}}-\dfrac{1}{t_N^2}\sum_{k=1}^{N}Z_{t_{k-1}}\left(Z_{t_{k}}-Z_{t_{k-1}}\right)^{\mathsf{T}}\longrightarrow \mathbf{0}_{d,d},
	\end{equation*}
	in probability, as $N\to\infty$.
\end{proposition}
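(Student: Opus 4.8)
The plan is to mirror the proof of Proposition \ref{Thm lien entre intégrale et somme pour b>0}, replacing the subcritical increment bound \eqref{upperbound expectation b>0} by its special-critical counterpart \eqref{int Zs - sum Zk SCC}, while keeping track of the fact that, for $b=0$, $\kappa=\mathbf{0}_n$, $\theta=\mathbf{0}_{n,n}$, the moments of $Z_s$ themselves grow polynomially in $s$, typically $\mathbb{E}(\norm{Z_s}_1^q)=O(s^q)$ (for instance $\mathbb{E}(Y_s)=as+\mathbb{E}(Y_0)$ and $\mathbb{E}(Y_s^2)=O(s^2)$, as already used in the proof of Theorem \ref{Théorème SCC continuous observations}). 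For each of the three displays I would establish $\mathrm{L}^1$-convergence of the corresponding difference, which yields the announced convergence in probability. The heavier normalizations $t_N^{-2}$ and $t_N^{-3}$ (against $t_N^{-1}$ and $t_N^{-1/2}$ in the subcritical case) are exactly what is needed to absorb this extra growth.

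For the first display I would write $\frac{1}{t_N^2}\mathbb{E}\norm{\int_0^{t_N}Z_s\mathrm{d}s-\sum_{k=1}^N\Delta_N Z_{t_{k-1}}}_1\leq\frac{1}{t_N^2}\sum_{k=1}^N\int_{t_{k-1}}^{t_k}\mathbb{E}\norm{Z_s-Z_{t_{k-1}}}_1\mathrm{d}s$ and apply \eqref{int Zs - sum Zk SCC} with $q=1$, so that the integrand is bounded by $C\Delta_N^{1/2}(1+t_N^{1/2})$; since $\sum_k\Delta_N=t_N$, the whole quantity is at most $C\Delta_N^{1/2}(1+t_N^{1/2})/t_N\to 0$. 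For the second display I would use the splitting $Z_sZ_s^{\mathsf{T}}-Z_{t_{k-1}}Z_{t_{k-1}}^{\mathsf{T}}=\tfrac12\big[(Z_s-Z_{t_{k-1}})(Z_s+Z_{t_{k-1}})^{\mathsf{T}}+(Z_s+Z_{t_{k-1}})(Z_s-Z_{t_{k-1}})^{\mathsf{T}}\big]$ as in \eqref{int ZZ- sum ZZ b>0}, then Cauchy--Schwarz combined with \eqref{int Zs - sum Zk SCC} for $q=2$ and $\mathbb{E}\norm{Z_s}_1^2=O(t_N^2)$; the $t_N^{-3}$ normalization again forces the estimate to $0$.

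The delicate part is the third (stochastic-integral) display. First I would rewrite the difference as a single mesh sum, namely $\int_0^{t_N}Z_s\mathrm{d}Z_s^{\mathsf{T}}-\sum_{k=1}^N Z_{t_{k-1}}(Z_{t_k}-Z_{t_{k-1}})^{\mathsf{T}}=\sum_{k=1}^N\int_{t_{k-1}}^{t_k}(Z_s-Z_{t_{k-1}})\mathrm{d}Z_s^{\mathsf{T}}$, and then substitute the dynamics $\mathrm{d}Z_s=\Lambda(Z_s)\tau\,\mathrm{d}s+\sqrt{Z_s^1}\rho\,\mathrm{d}B_s$ from \eqref{model0 Z}. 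A simplification special to this case is that the drift $\Lambda(Z_s)\tau=(a,m^{\mathsf{T}})^{\mathsf{T}}$ is constant, so the drift contribution reduces to $\frac{1}{t_N^2}\sum_k\int_{t_{k-1}}^{t_k}(Z_s-Z_{t_{k-1}})(a,m^{\mathsf{T}})\,\mathrm{d}s$, which is controlled exactly as in the first display. For the martingale contribution $\frac{1}{t_N^2}\sum_k\int_{t_{k-1}}^{t_k}\sqrt{Y_s}(Z_s-Z_{t_{k-1}})\mathrm{d}B_s^{\mathsf{T}}\rho^{\mathsf{T}}$, I would pass to the $\mathrm{L}^2$ bound, use the orthogonality of the stochastic integrals over the disjoint intervals $(t_{k-1},t_k)$ and Itô's isometry to reduce the squared Frobenius norm to $\sum_k\int_{t_{k-1}}^{t_k}\mathbb{E}\big(Y_s\norm{Z_s-Z_{t_{k-1}}}_1^2\big)\mathrm{d}s$, and then bound each term by Cauchy--Schwarz using $\mathbb{E}(Y_s^2)=O(t_N^2)$ together with \eqref{int Zs - sum Zk SCC} for $q=4$. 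This is the analogue of the estimate \eqref{proof convergence 2}, and the frequency assumption $N\Delta_N^2\to0$ (equivalently $\sqrt{N}\Delta_N\to0$) comfortably guarantees that this contribution vanishes, the heavier normalization $t_N^{-2}$ leaving ample room.

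The main obstacle I anticipate is the bookkeeping of the polynomial growth in $t_N$: in the critical regime both the increment bound \eqref{int Zs - sum Zk SCC} and the moments of $Z_s$ carry explicit powers of $t_N$, so the entire point is to verify that the larger normalizing powers $t_N^2$ and $t_N^3$, together with the relation $N\Delta_N=t_N$ and the frequency condition, dominate these powers term by term. The stochastic-integral display is the riskiest, since there the growing fourth-moment increment bound and the growing second moment of $Y_s$ appear as a product that must still be killed by $t_N^{-2}$.
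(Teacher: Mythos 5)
Your proposal is correct and follows essentially the same route as the paper: the $\mathrm{L}^1$ increment bound \eqref{int Zs - sum Zk SCC} for the first display, the symmetric splitting of $Z_sZ_s^{\mathsf{T}}-Z_{t_{k-1}}Z_{t_{k-1}}^{\mathsf{T}}$ with Cauchy--Schwarz for the second, and the drift-plus-martingale decomposition with It\^o's isometry and the $q=4$ increment bound for the third. The only (harmless, in fact slightly sharper) deviation is that you exploit the constancy of the drift $(a,m^{\mathsf{T}})^{\mathsf{T}}$ in this special critical case, whereas the paper bounds $\tau^{\mathsf{T}}\Lambda^{\mathsf{T}}(Z_s)$ by $C\norm{Z_s}_1$ and obtains the estimate $C\sqrt{N}\Delta_N$, which is where it actually uses the hypothesis $N\Delta_N^{2}\to0$.
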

\begin{proof}
	By the upper-bound inequality \eqref{int Zs - sum Zk SCC}, we have
		\begin{align*}
		\dfrac{1}{t_N^2}\mathbb{E}\left(\norm{\int_0^{t_N}Z_s\mathrm{~d}s-\sum_{k=1}^{N}\Delta_NZ_{t_{k-1}}}_1\right)&\leq    \dfrac{1}{t_N^2}\sum_{k=1}^{N}\int_{t_{k-1}}^{t_{k}}\mathbb{E}\left(\norm{Z_s-Z_{t_{k-1}}}_1\right)\mathrm{d}s\\
	&\leq \dfrac{C\sqrt{\Delta_N}}{t_N^2}\int_{0}^{t_{N}}\left(1+\sqrt{s}\right)\mathrm{d}s\leq\dfrac{C\sqrt{\Delta_N}}{t_N^2}\left( t_N+t_N^{\frac{3}{2}}\right)\to 0,
\end{align*}
as $N\to\infty$. Next, thanks to Minkowski inequality and the induction given in Remark \ref{Remark moment resulty on time t} we get $\mathbb{E}\left(\norm{Z_{s}}_1^2\right)\leq  C (1+t_N+t_N^2)$ which allows us to control the second term for large values of $N$. In fact, the same decomposition in relation \eqref{int ZZ- sum ZZ b>0} combined with the  upper-bound inequality \eqref{int Zs - sum Zk SCC} leads to
\begin{equation*}
	\frac{1}{t_N^3}\mathbb{E}\left(\norm{\int_0^{t_N}Z_sZ_s^{\mathsf{T}}\mathrm{~d}s-\sum_{k=1}^{N}\Delta_NZ_{t_{k-1}}Z_{t_{k-1}}^{\mathsf{T}}}_1\right)\leq\dfrac{C}{t_N^3}\sum_{k=1}^{N}\int_{t_{k-1}}^{t_{k}}\sqrt{\Delta_N}\,t_N^{\frac{3}{2}}\,\mathrm{d}s
 = \dfrac{C}{\sqrt{N}}\to 0,\quad\text{as }N\to\infty.	    \end{equation*}
For the last assertion, following the proof of the subcritical case and using the above arguments for large values of $N$, we deduce firstly 
  \begin{multline}\label{b=0 proof convergence 1}
\dfrac{1}{t^2_N}\displaystyle\sum_{k=1}^N\mathbb{E}\left(\norm{\int_{t_{k-1}}^{t_k}(Z_s-Z_{t_{k-1}})\tau^{\mathsf{T}}\Lambda^{\mathsf{T}}(Z_s)\mathrm{d}s}_1\right)\leq
    \dfrac{C}{t^2_N}\sum_{k=1}^N\mathbb{E}
  \left(\int_{t_{k-1}}^{t_k}\norm{Z_s-Z_{t_{k-1}}}_1\norm{Z_s}_1\mathrm{d}s\right)\\
  \leq \dfrac{C}{t_N^2}\sum_{k=1}^N\int_{t_{k-1}}^{t_k}\left(\mathbb{E}
  \left(\norm{Z_s-Z_{t_{k-1}}}^2_{1}\right)\right)^{\frac{1}{2}}\left(\mathbb{E}\left(\norm{Z_s}^2_{1}\right)\right)^{\frac{1}{2}}\mathrm{d}s\leq C\sqrt{N}\Delta_N
\end{multline}
  and secondly $\displaystyle\dfrac{C}{t_N^4}\sum_{k=1}^N\sum_{i=1}^d\int_{t_{k-1}}^{t_k}\left(\mathbb{E}\left(Y_s^2\right)\right)^{\frac{1}{2}}\left(\mathbb{E}\left((Z^i_s-Z_{t_{k-1}}^i)^4\right)\right)^{\frac{1}{2}}\mathrm{d}s\leq \dfrac{C}{N}$.
\end{proof}
Next, in the special supercritical case, we will consider a matrix representation of the previous propositions based on the following decomposition 
\begin{align}\label{Decomposition discrete - continuous SSC}
\begin{split}
	Q_{t_N}(\check{\tau}_{t_N}^{\Delta_N}-\hat{\tau}_{t_N})&=(Q_{t_N} (\Delta_N\check{\Gamma}_{t_N,\Delta_N})^{-1}R_{t_N}^{-1}) (R_{t_N}\check{\phi}_{t_N,\Delta_N})-(Q_{t_N} G_{t_N}^{-1}R_{t_N}^{-1}) (R_{t_N}f_{t_N})\\
	&=\mathcal{J}_{t_N,\Delta_N}^{-1}R_{t_N}\check{\phi}_{t_N,\Delta_N}-\mathcal{J}_{t_N}^{-1}R_{t_N}f_{t_N}\\
	&=\left(\mathcal{J}_{t_N,\Delta_N}^{-1}-\mathcal{J}_{t_N}^{-1}\right)R_{t_N}\check{\phi}_{t_N,\Delta_N}+\mathcal{J}_{t_N}^{-1}R_{t_N}\left(\check{\phi}_{t_N,\Delta_N}-f_{t_N}\right)\\
&=e^{-\lambda_{\min}(\theta)t_N}\left(\mathcal{J}_{t_N,\Delta_N}^{-1}-\mathcal{J}_{t_N}^{-1}\right)\left(e^{\lambda_{\min}(\theta)t_N}R_{t_N}\check{\phi}_{t_N,\Delta_N}\right)+\mathcal{J}_{t_N}^{-1}R_{t_N}\left(\check{\phi}_{t_N,\Delta_N}-f_{t_N}\right)
\end{split}
\end{align}
where $Q_{t_N}$, $\mathcal{J}_{t_N}$ and $R_{t_N}$ are defined in Theorem \ref{normality theorem supercritical DISCRETE case} and
$\mathcal{J}_{t_N,\Delta_N}:=\left(\text{diag}
		(\mathcal{J}_{t_N,\Delta_N}^{(1)},\mathbf{I}_n\otimes \mathcal{J}_{t_N,\Delta_N}^{(2)})\right)$, with $\mathcal{J}_{t_N,\Delta_N}^{(1)}=\begin{bmatrix}
1&-e^{bt_N}\displaystyle\sum_{k=1}^N \Delta_N Y_{t_k}\\[7pt]
-\dfrac{e^{bt_N}}{t_N}\displaystyle\sum_{k=1}^N \Delta_N Y_{t_k}&e^{2bt_N}\displaystyle\sum_{k=1}^N \Delta_N Y^2_{t_k}
\end{bmatrix}$ and $$\mathcal{J}_{t_N,\Delta_N}^{(2)}=\begin{bmatrix}
1&-e^{bt_N}\displaystyle\sum_{k=1}^N \Delta_N Y_{t_k}&-e^{\lambda_{\min}(\theta)t_N}\displaystyle\sum_{k=1}^N \Delta_N X_{t_k}^{\mathsf{T}}\\[7pt]
-\dfrac{e^{bt_N}}{t_N}\displaystyle\sum_{k=1}^N \Delta_N Y_{t_k}&e^{2bt_N}\displaystyle\sum_{k=1}^N \Delta_NY_{t_k}^2&e^{(b+\lambda_{\min}(\theta))t_N}\displaystyle\sum_{k=1}^N \Delta_N Y_{t_k}X_{t_k}^{\mathsf{T}}\\[7pt]
-\dfrac{e^{\lambda_{\min}(\theta)t_N}}{t_N}\displaystyle\sum_{k=1}^N \Delta_N X_{t_k}&e^{(b+\lambda_{\min}(\theta))t_N}\displaystyle\sum_{k=1}^N \Delta_N Y_{t_k}X_{t_k}&e^{2\lambda_{\min}(\theta)t_N}\displaystyle\sum_{k=1}^N \Delta_N X_{t_k}X_{t_k}^{\mathsf{T}}
\end{bmatrix}.$$
\begin{proposition}\label{Thm relation sum with integral SSC}
		For $b\in\R_{--}$, $\lambda_{\max}(\theta)<b$ and $\text{diag}(P^{-1}m)P^{-1}\kappa\in\R_-^n$, suppose that $N\Delta_N^{\frac{3}{2}}\to0$ and $\frac{N\Delta_N}{\ln(N\Delta_N^{3\slash2})}\to0$, as $N\to\infty$, then we have
\begin{equation*}	
e^{-\lambda_{\min}(\theta)t_N}\left(\mathcal{J}_{t_N,\Delta_N}^{-1}-\mathcal{J}_{t_N}^{-1}\right)\to \mathbf{0}\quad\text{and}\quad R_{t_N}\left(\check{\phi}_{t_N,\Delta_N}-f_{t_N}\right)\to\mathbf{0},
\end{equation*}
in $L^1$ and in probability, as $N\to\infty$.
\end{proposition}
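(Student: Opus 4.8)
The plan is to treat the two stated convergences separately, reducing both to an $L^1$ control of integral-versus-Riemann-sum errors weighted by the exponential factors carried by $R_{t_N}$, and then to feed these into the moment estimates \eqref{upperbound expectation SSC}. For the first convergence I would not estimate the inverses directly; instead I would use the resolvent identity
\begin{equation*}
\mathcal{J}_{t_N,\Delta_N}^{-1}-\mathcal{J}_{t_N}^{-1}=-\mathcal{J}_{t_N,\Delta_N}^{-1}\left(\mathcal{J}_{t_N,\Delta_N}-\mathcal{J}_{t_N}\right)\mathcal{J}_{t_N}^{-1}.
\end{equation*}
From the proof of Theorem \ref{normality theorem supercritical case} we already know that $\mathcal{J}_{t_N}\stackrel{\mathbb{P}}{\longrightarrow}V$ with $V$ almost surely invertible; once I establish $\mathcal{J}_{t_N,\Delta_N}-\mathcal{J}_{t_N}\to\mathbf{0}$ it follows that $\mathcal{J}_{t_N,\Delta_N}\stackrel{\mathbb{P}}{\longrightarrow}V$ as well, so both $\mathcal{J}_{t_N,\Delta_N}^{-1}$ and $\mathcal{J}_{t_N}^{-1}$ are bounded in probability and the whole problem reduces to proving that $e^{-\lambda_{\min}(\theta)t_N}(\mathcal{J}_{t_N,\Delta_N}-\mathcal{J}_{t_N})\to\mathbf{0}$. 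For the second convergence I would first observe that the top blocks of $\check{\phi}_{t_N,\Delta_N}$ and $f_{t_N}$ coincide \emph{exactly} at the terminal time (both equal $Y_{t_N}-Y_0$ and $X_{t_N}^{\mathsf{T}}-X_0^{\mathsf{T}}$), so only the remaining components — which are stochastic-integral approximations such as $\int_0^{t_N}Y_s\,\mathrm{d}Y_s-\sum_k Y_{t_{k-1}}(Y_{t_k}-Y_{t_{k-1}})$ — contribute.

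Every difference appearing in either convergence can be written as $\sum_{k=1}^N\int_{t_{k-1}}^{t_k}(\cdots)\,\mathrm{d}s$ or $\sum_{k=1}^N\int_{t_{k-1}}^{t_k}(Z_s-Z_{t_{k-1}})\,\mathrm{d}Z_s^{\mathsf{T}}$. For the monomial entries of $\mathcal{J}_{t_N,\Delta_N}-\mathcal{J}_{t_N}$ (the $\int Y_s^2$, $\int Y_sX_s$, $\int X_sX_s^{\mathsf{T}}$ blocks) I would bound the local increments by Cauchy--Schwarz together with \eqref{upperbound expectation SSC} and the pointwise bounds $\mathbb{E}(Y_s^q)\leq Ce^{-qbs}$, $\mathbb{E}(\norm{X_s}_1^q)\leq Ce^{-q\lambda_{\min}(\theta)s}$, obtaining estimates of the type $\mathbb{E}\lvert Y_{t_k}^2-Y_s^2\rvert\leq C\Delta_N^{1/2}e^{-2bt_k}$. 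For the stochastic-integral entries of $\check{\phi}_{t_N,\Delta_N}-f_{t_N}$ I would substitute the SDE \eqref{model01}, splitting each increment into a drift part and a martingale part. The martingale part, being a single It\^o integral over $[0,t_N]$ with left-endpoint-frozen integrand, is controlled in $L^2$ by It\^o's isometry (or Burkholder--Davis--Gundy); a typical computation gives $e^{3bt_N}\mathbb{E}((\text{mart})^2)\leq C\Delta_N$, i.e. it vanishes merely from $\Delta_N\to0$. The drift part is handled again by Cauchy--Schwarz and the moment bounds, and it behaves like the monomial entries above.

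The summation step is where the exponential bookkeeping happens. Summing the local bounds produces geometric series $\sum_{k=1}^N e^{-c\,b\,t_k}$ and $\sum_{k=1}^N e^{-c\,\lambda_{\min}(\theta)t_k}$; since $b,\lambda_{\min}(\theta)<0$ these are dominated by their last term and behave like $e^{-c\,b\,t_N}/(\lvert c\,b\rvert\,\Delta_N)$. After multiplying by the prefactors from $R_{t_N}$ (and, for the first convergence, the extra $e^{-\lambda_{\min}(\theta)t_N}$), each \emph{drift-type} and monomial entry is bounded, up to a constant, by $\Delta_N^{1/2}e^{\gamma t_N}$ for a fixed rate $\gamma>0$ — the worst cases being $\gamma=\lvert\lambda_{\min}(\theta)\rvert$ for the $\int Y_s^2$ entry weighted by $e^{-\lambda_{\min}(\theta)t_N}$, and $\gamma=\lvert b\rvert/2$ for the $\int Y_s\,\mathrm{d}Y_s$ entry weighted by $e^{3bt_N/2}$.

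The main obstacle is therefore not the stochastic analysis but showing that these residual terms $\Delta_N^{1/2}e^{\gamma t_N}$ vanish, i.e. that $\tfrac12\ln\Delta_N+\gamma N\Delta_N\to-\infty$ for every fixed $\gamma>0$; this is precisely what the hypotheses are designed to guarantee. From $N\Delta_N^{3/2}\to0$ one gets $\Delta_N\to0$ (which already kills the martingale parts) and $\ln(N\Delta_N^{3/2})=\ln N+\tfrac32\ln\Delta_N\to-\infty$, while $\frac{N\Delta_N}{\ln(N\Delta_N^{3/2})}\to0$ forces $N\Delta_N=o\!\left(-\ln N-\tfrac32\ln\Delta_N\right)$. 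Choosing $\varepsilon<\tfrac{1}{3\gamma}$ in the resulting bound $\gamma N\Delta_N\leq\gamma\varepsilon(-\ln N-\tfrac32\ln\Delta_N)$ then yields $\tfrac12\ln\Delta_N+\gamma N\Delta_N\leq-\gamma\varepsilon\ln N+(\tfrac12-\tfrac32\gamma\varepsilon)\ln\Delta_N\to-\infty$, so every weighted entry tends to $0$ in $L^1$ and hence in probability. This gives $R_{t_N}(\check{\phi}_{t_N,\Delta_N}-f_{t_N})\to\mathbf{0}$ and $e^{-\lambda_{\min}(\theta)t_N}(\mathcal{J}_{t_N,\Delta_N}-\mathcal{J}_{t_N})\to\mathbf{0}$; substituting the latter into the resolvent identity completes the proof of both assertions.
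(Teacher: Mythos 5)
Your proposal is correct and its core is the same as the paper's: bound each entry of $\mathcal{J}_{t_N,\Delta_N}-\mathcal{J}_{t_N}$ and of $\check{\phi}_{t_N,\Delta_N}-f_{t_N}$ in $L^1$ using the local increment estimates \eqref{upperbound expectation SSC} together with $\mathbb{E}(Y_s^q)\leq Ce^{-qbs}$ and $\mathbb{E}(\norm{X_s}_1^q)\leq Ce^{-q\lambda_{\min}(\theta)s}$, split the stochastic-integral terms into drift and martingale parts (Cauchy--Schwarz for the former, It\^o isometry for the latter), and then kill the residual exponential with the hypothesis $\frac{N\Delta_N}{\ln(N\Delta_N^{3/2})}\to0$. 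Two points differ. First, you pass from $\mathcal{J}_{t_N,\Delta_N}-\mathcal{J}_{t_N}$ to the difference of inverses via the resolvent identity and tightness of $\mathcal{J}_{t_N,\Delta_N}^{-1},\mathcal{J}_{t_N}^{-1}$, whereas the paper's proof of the proposition only establishes the bounds on the raw matrix differences and defers the inverse step to the proof of Theorem \ref{normality theorem supercritical DISCRETE case}, where it is done entrywise by a mean value theorem applied to $A\mapsto e_k^{\mathsf{T}}A^{-1}e_{k'}$; the two devices are equivalent here, and yours is arguably more self-contained (note, though, that neither argument actually upgrades the convergence of the \emph{inverse} difference to $L^1$ as literally stated — both only deliver it in probability, the $L^1$ claim really pertaining to the pre-inverted quantities). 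Second, you sum the local bounds as a geometric series dominated by its last term, obtaining residuals of order $\Delta_N^{1/2}e^{\gamma t_N}$, while the paper uses the cruder bound $N\times(\text{max term})$ and gets $N\Delta_N^{3/2}e^{\gamma t_N}=t_N\Delta_N^{1/2}e^{\gamma t_N}$; both vanish under the stated hypotheses, and your $\varepsilon$-argument for $\tfrac12\ln\Delta_N+\gamma N\Delta_N\to-\infty$ is a correct, slightly more explicit version of the paper's rewriting $e^{\gamma t_N}N\Delta_N^{3/2}=\exp\bigl(N\Delta_N\bigl(\gamma+\tfrac{\ln(N\Delta_N^{3/2})}{N\Delta_N}\bigr)\bigr)$.
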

\begin{proof}
   Thanks to proposition \ref{Thm relation sum with integral SSC}, we get
\begin{align*}
e^{bt_N}\mathbb{E}\left(\int_0^{t_N}Y_s\mathrm{d}s-\displaystyle\sum_{k=1}^N \Delta_N Y_{t_{k-1}}\right)\leq
     e^{bt_N}\sum_{k=1}^N\int_{t_{k-1}}^{t_k}\mathbb{E}(Y_{s}-Y_{t_{k-1}})\mathrm{d}s\leq e^{bt_N}\sum_{k=1}^N\int_{t_{k-1}}^{t_k}Ce^{-bt_k}\Delta_N^{\frac{1}{2}}\mathrm{d}s\leq C N\Delta_N^{\frac{3}{2}},
\end{align*}
similarly,
$e^{\lambda_{\min}(\theta)t_N}\mathbb{E}\left(\norm{\displaystyle\int_0^{t_N}X_s\mathrm{d}s-\sum_{k=1}^N \Delta_N X_{t_{k-1}}}_1\right)\leq C N\Delta_N^{\frac{3}{2}}.$
Furthermore, we get
\begin{multline*}
e^{2bt_N}\mathbb{E}\left(\displaystyle\int_0^{t_N}Y_s^2\mathrm{d}s-\sum_{k=1}^N \Delta_N Y_{t_{k-1}}^2\right)\leq e^{2bt_N}\sum_{k=1}^N\int_{t_{k-1}}^{t_k}\left(\mathbb{E}\left((Y_{s}-Y_{t_{k-1}})^2\right)\mathbb{E}\left(Y_{s}^2+Y_{t_{k-1}}^2\right)\right)^{\frac{1}{2}}\mathrm{d}s\\
 \leq C e^{2bt_N}\sum_{k=1}^N\int_{t_{k-1}}^{t_k}e^{-bs}\Delta_N^{\frac{1}{2}}\left(e^{-2bs}+e^{-2bt_{k-1}}\right)^{\frac{1}{2}}\mathrm{d}s\leq C N\Delta_N^{\frac{3}{2}}.
\end{multline*}
By a similar matrix decomposition, we deduce that $$e^{2\lambda_{\min}(\theta)t_N}\mathbb{E}\left(\norm{\displaystyle\int_0^{t_N}X_sX_{s}^{\mathsf{T}}\mathrm{d}s-\sum_{k=1}^N \Delta_N X_{t_{k-1}}X_{t_{k-1}}^{\mathsf{T}}}_1\right)\leq CN\Delta_N^{\frac{3}{2}}.$$ Furthermore, we obtain
\begin{multline*}
e^{(b+\lambda_{\min}(\theta))t_N}\mathbb{E}\left(\norm{\displaystyle\int_0^{t_N}Y_sX_s\mathrm{d}s-\sum_{k=1}^N \Delta_N Y_{t_{k-1}}X_{t_{k-1}}}_1\right)\leq
e^{(b+\lambda_{\min}(\theta))t_N}\sum_{k=1}^N\int_{t_{k-1}}^{t_k} \left(\left(\mathbb{E}(Y^2_s)\right)^{\frac{1}{2}}\right.\\
\times\left(\mathbb{E}\left(\norm{X_s-X_{t_{k-1}}}^2_1\right)\right)^{\frac{1}{2}}\left.+\left(\mathbb{E}\left((Y_s-Y_{t_{k-1}})^2\right)\right)^{\frac{1}{2}}\left(\mathbb{E}\left(\norm{X_{t_{k-1}}}^2_1\right)\right)^{\frac{1}{2}}\right)\mathrm{d}s
\leq CN\Delta_N^{\frac{3}{2}}. 
\end{multline*} 
Next, since we have 
\begin{align*}
    e^{-\lambda_{\min}(\theta)t_N}N\Delta_N^{\frac{3}{2}}=\exp\left(N\Delta_N\left(-\lambda_{\min}(\theta)+\frac{\ln(N\Delta_N^{\frac{3}{2}})}{N\Delta_N}\right)\right)\to 0,
\end{align*}
as $N\to\infty$, then the first part of the Proposition holds. For the last assertion, similarly to the previous cases, we proceed with same calculations for each component of $R_{t_N}\left(\check{\phi}_{t_N,\Delta_N}-f_{t_N}\right)$. For the first component, 
we get $ \mathbb{E}\left(\left\vert e^{\frac{3bt_N}{2}}\displaystyle\sum_{k=1}^N\int_{t_{k-1}}^{t_k}(Y_s-Y_{t_{k-1}})\mathrm{d}Y_s\right\vert\right)$ is less than or equal to
\begin{equation*}
\sqrt{\Delta_N} e^{\frac{3bt_N}{2}}\sum_{k=1}^N\mathbb{E}\left(\int_{t_{k-1}}^{t_k}\vert Y_s-Y_{t_{k-1}}\vert \vert a-b Y_s\vert \mathrm{d}s\right)+\sigma_1^2\left(\mathbb{E}\left(e^{3bt_N}\left\vert\sum_{k=1}^N\int_{t_{k-1}}^{t_k}\sqrt{Y_s}(Y_s-Y_{t_{k-1}})\mathrm{d}B_s^{1}\right\vert^2\right)\right)^{\frac{1}{2}}.
\end{equation*}
Hence, since in the supercritical case, we have $e^{qbt_N}\mathbb{E}
  \left(\left(Y_s-Y_{t_{k-1}}\right)^q\right)\leq C \Delta_N^{\frac{q}{2}}$ and by Proposition \ref{moment result on time t} we have $e^{qbt_N}\mathbb{E}\left(Y_s^q\right)\leq C$, then with similar arguments and calculations as in the proof of Proposition \ref{Thm lien entre intégrale et somme pour b>0}, we deduce that first term is bounded by
  \begin{equation*}
C \sqrt{\Delta_N}e^{-\frac{bt_N}{2}}\sum_{k=1}^N\int_{t_{k-1}}^{t_k}\left(e^{2bt_N}\mathbb{E}
  \left(\left(Y_s-Y_{t_{k-1}}\right)^2\right)\right)^{\frac{1}{2}}\left(e^{2bt_N}\mathbb{E}\left((a-bY_s)^2\right)\right)^{\frac{1}{2}}\mathrm{d}s\leq CN\Delta_N^2 e^{-\frac{bt_N}{2}},
\end{equation*}
  and that the second term is bounded by
  \begin{equation*}
e^{3bt_N}\sum_{k=1}^N\mathbb{E}\left(\int_{t_{k-1}}^{t_k}Y_s(Y_s-Y_{t_{k-1}})^2\mathrm{d}s\right)
   \leq \sum_{k=1}^N\int_{t_{k-1}}^{t_k}\left(e^{2bt_N}\mathbb{E}\left(Y_s^2\right)\right)^{\frac{1}{2}}\left(e^{4bt_N}\mathbb{E}\left((Y_s-Y_{t_{k-1}})^4\right)\right)^{\frac{1}{2}}\mathrm{d}s\leq CN\Delta_N^2.
\end{equation*}
Using the ussumption $\frac{\ln(N\Delta_N^{\frac{3}{2}})}{N\Delta_N}\to-\infty$, we deduce that the last two expressions tend to zero as $N\to\infty$. Analysis similar to the above one shows that the rest of components of $R_{t_N}\left(\check{\phi}_{t_N,\Delta_N}-f_{t_N}\right)$ tends also to zero in probability, as $N\to\infty$, which completes the proof.
\end{proof}
In the following subsection, we are going to study the asymptotic behavior of the estimator $\check{\tau}_{t_N}^{\Delta_N}$ based on the previous results of propositions \ref{Thm lien entre intégrale et somme pour b>0}, \ref{Thm relation sum with integral SCC} and \ref{Thm relation sum with integral SSC}. 
\subsection{Asymtotic behavior of the CLSE estimators}
First, we are going to establish the asymptotic normality of the discrete CLSE $\check{\tau}_{t_N}^{\Delta_N}$ in the subcritical case, we consider the affine diffusion model $\eqref{model01}$ with $a,b\in\R_{++}$ $m,\kappa\in\R^n$, $\theta\in\mathcal{M}_n$ a positive definite diagonalizable matrix.
\subsubsection{Subcritical case}
\begin{theoreme}
We suppose that $N\Delta_N^2\to0$ as $N\to\infty$, then  the discrete CLSE $\check{\tau}_{t_N}^{\Delta_N}$ is asymptotically normal, namely, 
	\begin{equation*}
		\sqrt{t_N}(\check{\tau}_{t_N}^{\Delta_N}-\tau)\stackrel{\mathcal{D}}{\longrightarrow}\mathcal{N}_{d^2+1}({0},[\mathbb{E}(G_\infty)]^{-1}\mathbb{E}(\mathrm{H}_\infty)[\mathbb{E}(G_\infty)]^{-1})
	\end{equation*}
	where $G_{\infty}$ and $\mathrm{H}_{\infty}$ are defined in Theorem \ref{Asymptotic Normality CLSE b>0 Continuous}.
	\label{Asymptotic Normality CLSE b>0 DISCRETE}
\end{theoreme}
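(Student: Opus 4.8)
The plan is to start from the exact decomposition \eqref{discrete-continuous+continuous-exact} rescaled by $\sqrt{t_N}$, namely
\begin{equation*}
\sqrt{t_N}\left(\check{\tau}_{t_N}^{\Delta_N}-\tau\right)=\sqrt{t_N}\left(\check{\tau}_{t_N}^{\Delta_N}-\hat{\tau}_{t_N}\right)+\sqrt{t_N}\left(\hat{\tau}_{t_N}-\tau\right).
\end{equation*}
Since $t_N=N\Delta_N\to\infty$ as $N\to\infty$, Theorem \ref{Asymptotic Normality CLSE b>0 Continuous} applied along the sequence $T=t_N$ yields $\sqrt{t_N}(\hat{\tau}_{t_N}-\tau)\stackrel{\mathcal{D}}{\longrightarrow}\mathcal{N}_{d^2+1}(0,[\mathbb{E}(G_\infty)]^{-1}\mathbb{E}(\mathrm{H}_\infty)[\mathbb{E}(G_\infty)]^{-1})$. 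Hence, by Slutsky's lemma, the theorem reduces to the negligibility statement
\begin{equation*}
\sqrt{t_N}\left(\check{\tau}_{t_N}^{\Delta_N}-\hat{\tau}_{t_N}\right)\stackrel{\mathbb{P}}{\longrightarrow}\mathbf{0},\qquad\text{as }N\to\infty.
\end{equation*}

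To establish this, I would work block by block, beginning with the $(a,b)$-block. Setting $A_N:=\frac{\Delta_N}{t_N}\check{\Gamma}^{(1)}_{t_N,\Delta_N}$ and $B_N:=\frac{1}{t_N}G^{(1)}_{t_N}$ and using the expressions \eqref{tau approx}, \eqref{err} and \eqref{estimator discrete}, the corresponding block of $\sqrt{t_N}(\check{\tau}_{t_N}^{\Delta_N}-\hat{\tau}_{t_N})$ equals $A_N^{-1}u_N-B_N^{-1}v_N$, with $u_N:=\frac{1}{\sqrt{t_N}}\check{\phi}^{(1)}_{t_N,\Delta_N}$ and $v_N:=\frac{1}{\sqrt{t_N}}f^{(1)}_{t_N}$. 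I would use the identity
\begin{equation*}
A_N^{-1}u_N-B_N^{-1}v_N=A_N^{-1}\left(u_N-v_N\right)+A_N^{-1}\left(B_N-A_N\right)\left(B_N^{-1}v_N\right).
\end{equation*}
The ergodic argument in the proof of Theorem \ref{consistency CLSE b>0} gives $B_N\stackrel{a.s.}{\longrightarrow}\mathbb{E}(G_\infty^{(1)})$, an invertible matrix, and Proposition \ref{Thm lien entre intégrale et somme pour b>0} gives $A_N-B_N\stackrel{\mathbb{P}}{\longrightarrow}\mathbf{0}$, so $A_N^{-1}\stackrel{\mathbb{P}}{\longrightarrow}[\mathbb{E}(G_\infty^{(1)})]^{-1}$ is bounded in probability. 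The same proposition gives $u_N-v_N\stackrel{\mathbb{P}}{\longrightarrow}\mathbf{0}$ (this uses the convergence of the Itô term, hence the hypothesis $N\Delta_N^2\to0$), so the first summand vanishes. For the second summand I would note $B_N^{-1}v_N=\sqrt{t_N}\,\hat{\tau}^{(1)}_{t_N}$ with $\hat{\tau}^{(1)}_{t_N}\stackrel{a.s.}{\longrightarrow}(a,b)^{\mathsf{T}}$ bounded, so that its size is controlled by $\sqrt{t_N}\,\|B_N-A_N\|$; the $\mathrm{L}^1$ estimates established inside the proof of Proposition \ref{Thm lien entre intégrale et somme pour b>0} give $\mathbb{E}\|B_N-A_N\|\le C\sqrt{\Delta_N}$, whence $\sqrt{t_N}\,\mathbb{E}\|B_N-A_N\|\le C\sqrt{N\Delta_N^2}\to0$ and the second summand vanishes too.

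The $\mathrm{vec}([m,\kappa,\theta]^{\mathsf{T}})$-block is handled in exactly the same way after replacing $G^{(1)},\check{\Gamma}^{(1)},f^{(1)},\check{\phi}^{(1)}$ by $\mathbf{I}_n\otimes G^{(2)},\mathbf{I}_n\otimes\check{\Gamma}^{(2)},\mathrm{vec}(f^{(2)}),\mathrm{vec}(\check{\phi}^{(2)})$; the Kronecker factor passes through the inverse via $(\mathbf{I}_n\otimes M)^{-1}=\mathbf{I}_n\otimes M^{-1}$, and the required convergences and $\mathrm{L}^1$ rates again come from Proposition \ref{Thm lien entre intégrale et somme pour b>0}, using that $\mathbb{E}(G_\infty^{(2)})$ is invertible. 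Combining the two blocks gives $\sqrt{t_N}(\check{\tau}_{t_N}^{\Delta_N}-\hat{\tau}_{t_N})\stackrel{\mathbb{P}}{\longrightarrow}\mathbf{0}$, and Slutsky's lemma then transfers the Gaussian limit of the continuous CLSE to $\check{\tau}_{t_N}^{\Delta_N}$. The crux is the second summand above: because $v_N=\frac{1}{\sqrt{t_N}}f^{(1)}_{t_N}$ is \emph{not} tight, growing like $\sqrt{t_N}$ through its drift part $\frac{1}{\sqrt{t_N}}G^{(1)}_{t_N}(a,b)^{\mathsf{T}}$, the bare convergence $B_N-A_N\to\mathbf{0}$ does not suffice; one must quantify the rate $O(\sqrt{\Delta_N})$ of the Riemann-sum approximation of the Gram matrix and balance it against the normalization $\sqrt{t_N}$, which is exactly what the assumption $N\Delta_N^2\to0$ guarantees.
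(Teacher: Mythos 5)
Your proof is correct and follows the same skeleton as the paper's: the decomposition \eqref{discrete-continuous+continuous-exact}, Theorem \ref{Asymptotic Normality CLSE b>0 Continuous} applied along $T=t_N$ for the second term, and Proposition \ref{Thm lien entre intégrale et somme pour b>0} for the negligibility of the first. Where you genuinely diverge is in how that negligibility is extracted. The paper asserts that the couple $\bigl(\tfrac{1}{t_N}G^{(i)}_{t_N},\tfrac{1}{\sqrt{t_N}}f^{(i)}_{t_N}\bigr)$ converges in distribution and then combines Lemma \ref{convergence (Xn,Yn) to (X,X) in law} with the continuous mapping theorem for $(A,B)\mapsto A^{-1}B$. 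As you point out, $\tfrac{1}{\sqrt{t_N}}f^{(i)}_{t_N}$ is \emph{not} tight: since $f^{(1)}_{T}=G^{(1)}_{T}(a,b)^{\mathsf{T}}+h^{(1)}_{T}$ and $\tfrac{1}{T}G^{(1)}_{T}$ converges a.s.\ to a nonzero limit, it carries a drift of order $\sqrt{t_N}$, so one is comparing two sequences that both blow up and the joint-convergence/continuous-mapping route does not apply verbatim. Your resolvent identity
\[
A_N^{-1}u_N-B_N^{-1}v_N=A_N^{-1}\bigl(u_N-v_N\bigr)+A_N^{-1}\bigl(B_N-A_N\bigr)B_N^{-1}v_N,
\]
combined with the quantitative rate $\mathbb{E}\norm{B_N-A_N}_1\leq C\sqrt{\Delta_N}$ --- which is indeed what the proof of Proposition \ref{Thm lien entre intégrale et somme pour b>0} establishes, beyond the unnormalized convergence stated in its conclusion --- closes this gap: the second summand is of order $\sqrt{t_N\Delta_N}=\sqrt{N\Delta_N^2}\to0$ in $\mathrm{L}^1$, which is exactly where the hypothesis $N\Delta_N^2\to0$ enters for the Gram-matrix block, the same hypothesis being needed independently for the It\^o-integral part of $u_N-v_N$. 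Your version therefore buys rigor at the paper's weakest step and makes the role of the stepsize condition transparent; an equivalent fix closer to the paper's phrasing would be to work with the tight martingale parts $\tfrac{1}{\sqrt{t_N}}h_{t_N}$ and $\tfrac{1}{\sqrt{t_N}}(\check{\phi}_{t_N,\Delta_N}-\Delta_N\check{\Gamma}_{t_N,\Delta_N}\tau)$ instead of $f_{t_N}$ and $\check{\phi}_{t_N,\Delta_N}$, but that again requires $\sqrt{t_N}$ times the Gram-matrix discrepancy to vanish, i.e.\ precisely your estimate.
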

\begin{proof}
	By combining the expression of the continuous CLSE \eqref{tau approx} with the one of the discrete CLSE \eqref{estimator discrete}, we get
	\begin{equation*}
		\sqrt{t_N}(\check{\tau}_{t_N}^{\Delta_N}-\hat{\tau}_{t_N})=\begin{bmatrix}
			\left(\dfrac{\Delta_N}{t_N}\check{\Gamma}_{t_N,\Delta_N}^{(1)}\right)^{-1}\dfrac{1}{\sqrt{t_N}}\check{\phi}_{t_N,\Delta_N}^{(1)}\\
			\text{vec}\left(\left(\dfrac{\Delta_N}{t_N}\check{\Gamma}_{t_N,\Delta_N}^{(2)}\right)^{-1}\dfrac{1}{\sqrt{t_N}}\check{\phi}_{t_N,\Delta_N}^{(2)}\right)
		\end{bmatrix}-\begin{bmatrix}
		\left(\dfrac{1}{t_N}G_{t_N}^{(1)}\right)^{-1}\dfrac{1}{\sqrt{t_N}}f_{t_N}^{(1)}\\\text{vec}\left(
		\left(\dfrac{1}{t_N}G_{t_N}^{(2)}\right)^{-1}\dfrac{1}{\sqrt{t_N}}f_{t_N}^{(2)}\right)
	\end{bmatrix}.
	\end{equation*}
Firstly, for all $i\in\lbrace1,2\rbrace$, by the ergodicity theorem \ref{ergodicity theorem}, the central limit theorem \ref{CLT Van Zanten} and Slutsky's lemma, the couple $\left(\frac{1}{t_N}G_{t_N}^{(i)},\frac{1}{\sqrt{t_N}}f_{t_N}^{(i)}\right)$ converges in distribution, as $N\to\infty$.
Secondly, thanks to Proposition \ref{Thm lien entre intégrale et somme pour b>0}, we have 
$\left(\frac{1}{t_N}G_{t_N}^{(i)},\frac{1}{\sqrt{t_N}}f_{t_N}^{(i)}\right)-\left(\frac{\Delta_N}{t_N}\Gamma_{t_N,\Delta_N}^{(i)},\frac{1}{\sqrt{t_N}}\check{\phi}_{t_N,\Delta_N}^{(i)}\right)\stackrel{\mathbb{P}}{\longrightarrow}\mathbf{0}$, as $N\to\infty$.
Finally, since for all $N\in\N$, we have $\dfrac{\Delta_N}{t_N}\check{\Gamma}_{t_N,\Delta_N}^{(i)}$ and $\dfrac{1}{t_N}G_{t_N}^{(i)}$ are positive definite matrices and $(A,B)\mapsto A^{-1}B$ is a continuous matrix function on the set of non-singular matrices, then by combining Lemma \ref{convergence (Xn,Yn) to (X,X) in law} with the continuous mapping theorem, we obtain $\sqrt{t_N}(\check{\tau}_{t_N}^{\Delta_N}-\hat{\tau}_{t_N})\stackrel{\mathbb{P}}{\longrightarrow}\mathbf{0}$, as $N\to\infty$. We complete the proof using the decomposition $\eqref{discrete-continuous+continuous-exact}$ and Theorem \ref{Asymptotic Normality CLSE b>0 Continuous}. 
\end{proof}
Next, we consider the special critical case introduced in Subsection \ref{A special critical case continuous observations} to study the asymptotic behavior of the discrete CLSE $\check{\tau}_{t_N}^{\Delta_N}$, we consider the model $\eqref{model01}$ with $a\in\R_{+}$, $b=0$, $m\in \R^n$, $\kappa=\mathbf{0}_n$ and $\theta=\mathbf{0}_{n,n}$.   
\subsubsection{A special critical case}
\begin{theoreme}
We suppose that $N\Delta_N^{2}\to 0$ as $N\to\infty$, then
{\begin{equation*}
		\begin{bmatrix}
			\check{a}_{t_N}^{\Delta_N}-a\\
			t_N \check{b}_{t_N}^{\Delta_N}\\[3pt]
   \text{vec}\begin{bmatrix}
				\left.\check{m}_{t_N}^{\Delta_N}\right.^{\mathsf{T}}-m^{\mathsf{T}}\\
				t_N \left.\check{\kappa}_{t_N}^{\Delta_N}\right.^{\mathsf{T}}\\
				t_N \left.\check{\theta}_{t_N}^{\Delta_N}\right.^{\mathsf{T}}
			\end{bmatrix}
		\end{bmatrix}\stackrel{\mathcal{D}}{\longrightarrow}		\text{diag}\left(\mathit{U}_1^{-1},I_n\otimes\mathit{U}_2^{-1}\right)(\mathit{R}_1,\text{vec}(R_2))^{\mathsf{T}},\quad\text{as }N\to\infty,
\end{equation*}}
where $\mathit{U}_1$, $\mathit{U}_2$, $\mathit{R}_1$ and $\mathit{R}_2$ are defined in Theorem \ref{Théorème SCC continuous observations}. 
\label{Asymptotic Normality SCC DISCRETE}
\end{theoreme}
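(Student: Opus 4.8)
The plan is to transcribe, almost verbatim, the subcritical discrete argument of Theorem \ref{Asymptotic Normality CLSE b>0 DISCRETE}, replacing the ergodic normalisation by the critical scaling already used in Theorem \ref{Théorème SCC continuous observations}, and to transfer the limit law from the continuous estimator to the discrete one through the decomposition \eqref{discrete-continuous+continuous-exact} taken at $T=t_N$. Since Theorem \ref{Théorème SCC continuous observations} already gives that the critically scaled continuous error $\bigl(\hat a_{t_N}-a,\,t_N\hat b_{t_N},\,\mathrm{vec}(\hat m_{t_N}^{\mathsf{T}}-m^{\mathsf{T}},\,t_N\hat\kappa_{t_N}^{\mathsf{T}},\,t_N\hat\theta_{t_N}^{\mathsf{T}})\bigr)^{\mathsf{T}}$ converges in distribution to $\mathrm{diag}(U_1^{-1},I_n\otimes U_2^{-1})(R_1,\mathrm{vec}(R_2))^{\mathsf{T}}$, it is enough to prove that, with the very same diagonal normalisation, the discrete--continuous difference is negligible, namely
$$\mathrm{diag}\bigl(1,t_N,I_n\otimes\mathrm{diag}(1,t_N,t_NI_n)\bigr)\bigl(\check\tau_{t_N}^{\Delta_N}-\hat\tau_{t_N}\bigr)\stackrel{\mathbb{P}}{\longrightarrow}\mathbf{0},\quad\text{as }N\to\infty.$$

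Next I would factor both errors exactly as in the proof of Theorem \ref{Théorème SCC continuous observations}. For the $(a,b)$-block one writes the scaled continuous error as $(\mathcal U_{t_N}^{(1)})^{-1}$ applied to the normalised score, where $\mathcal U_{t_N}^{(1)}$ is the matrix with entries $1$, $-\tfrac{1}{t_N^2}\int_0^{t_N}Y_s\,\mathrm ds$ and $\tfrac{1}{t_N^3}\int_0^{t_N}Y_s^2\,\mathrm ds$, and the normalised score is $\bigl(\tfrac{\sigma_1}{t_N}\int_0^{t_N}\sqrt{Y_s}\,\mathrm dB_s^1,\,-\tfrac{\sigma_1}{t_N^2}\int_0^{t_N}Y_s\sqrt{Y_s}\,\mathrm dB_s^1\bigr)^{\mathsf{T}}$. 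The scaled discrete error is given by the identical formula, with $\mathcal U_{t_N}^{(1)}$ replaced by the Riemann-sum matrix $\check{\mathcal U}_{t_N}^{(1)}$ (entries $\tfrac{\Delta_N}{t_N^2}\sum_k Y_{t_{k-1}}$ and $\tfrac{\Delta_N}{t_N^3}\sum_k Y_{t_{k-1}}^2$) and the score integrals replaced by the corresponding Itô sums extracted from $\check\phi_{t_N,\Delta_N}^{(1)}$. The same factorisation, now with the $3$-block scaling $\mathrm{diag}(t_N^{1/2},t_N^{3/2},t_N^{3/2}I_n)$, applies to the $(m,\kappa,\theta)$-block and its matrices $\mathcal U_{t_N}^{(2)}$, $\check{\mathcal U}_{t_N}^{(2)}$.

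The crux is then a direct application of Proposition \ref{Thm relation sum with integral SCC}: under $N\Delta_N^{2}\to0$ its three convergences say precisely that $\check{\mathcal U}_{t_N}^{(i)}-\mathcal U_{t_N}^{(i)}\stackrel{\mathbb{P}}{\longrightarrow}\mathbf 0$ and that the normalised discrete scores differ from the normalised continuous ones by an $o_{\mathbb P}(1)$ term, for $i\in\{1,2\}$. Since by Theorem \ref{Théorème SCC continuous observations} the pairs $(\mathcal U_{t_N}^{(i)},\text{score})$ converge in distribution to $(U_i,\cdot)$ with $U_i$ almost surely invertible, the absolute continuity of $(\mathcal Y_s,\mathcal X_s)$ ruling out degeneracy exactly as in the continuous proof, the continuity of $(A,v)\mapsto A^{-1}v$ on nonsingular matrices, combined with Lemma \ref{convergence (Xn,Yn) to (X,X) in law} and the continuous mapping theorem, yields the displayed $o_{\mathbb P}(1)$ conclusion. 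Plugging this into \eqref{discrete-continuous+continuous-exact} and invoking Slutsky's lemma together with Theorem \ref{Théorème SCC continuous observations} completes the proof. The main obstacle is the stochastic-integral comparison $\tfrac{1}{t_N^2}\int_0^{t_N}Z_s\,\mathrm dZ_s^{\mathsf{T}}$ versus its Itô sum, whose control rests on the critical moment bound \eqref{int Zs - sum Zk SCC} and is precisely where the frequency condition $N\Delta_N^{2}\to0$ is consumed; everything else is a routine transcription of the continuous argument.
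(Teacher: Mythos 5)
Your proposal is correct and follows essentially the same route as the paper: the decomposition \eqref{discrete-continuous+continuous-exact}, the critical rescaling of the matrix/score factorisation from Theorem \ref{Théorème SCC continuous observations}, Proposition \ref{Thm relation sum with integral SCC} to kill the discrete--continuous gap under $N\Delta_N^2\to 0$, and Lemma \ref{convergence (Xn,Yn) to (X,X) in law} with the continuous mapping theorem to conclude. The only cosmetic difference is that the paper packages the relevant functionals into the vector $V$ of \eqref{function V SCC} rather than into the matrices $\mathcal U_{t_N}^{(i)}$ and scores, but the argument is the same.
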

\begin{proof}
Let $i\in\{1,2\}$. Similarly to the previous case and by same matrix calculation as in relations \ref{hat a - a and T hat b SCC} and \ref{hat m - m T hat k and T hat theta SSC}, on the one hand, we get 
\begin{align*}
		\begin{bmatrix}	\check{a}_{t_N}^{\Delta_N}-a\\
			t_N \check{b}_{t_N}^{\Delta_N}
			\end{bmatrix}
		=&
			\begin{bmatrix}
					1& -\dfrac{1}{t_N^2}\displaystyle\sum_{k=1}^{N}\Delta_NY_{t_{k-1}}\\
					-\dfrac{1}{t_N^2}\displaystyle\sum_{k=1}^{N}\Delta_NY_{t_{k-1}}&\dfrac{1}{t_N^3} \displaystyle\sum_{k=1}^{N}\Delta_NY_{t_{k-1}}^2
				\end{bmatrix}^{-1}\begin{bmatrix}
					\dfrac{1}{t_N}(Y_{t_N}-Y_{0})\\
					-\dfrac{1}{t_N^2}\displaystyle\sum_{k=1}^{N} \left(Y_{t_k}-Y_{t_{k-1}}\right)Y_{t_{k-1}}
				\end{bmatrix}\\
    &-\begin{bmatrix}
			1&-\dfrac{1}{t_N^2}\displaystyle\int_{0}^{t_N}Y_s\mathrm{d}s\\[3pt]
   -\dfrac{1}{t_N^2}\displaystyle\int_{0}^{t_N}Y_s\mathrm{d}s&\dfrac{1}{t_N^3}\displaystyle\int_{0}^{t_N}Y_s^2\mathrm{d}s
		\end{bmatrix}^{-1}\begin{bmatrix}
					\dfrac{1}{t_N}(Y_{t_N}-Y_{0})\\
					-\dfrac{1}{t_N^2}\displaystyle\int_{0}^{t_N} Y_s\mathrm{d}Y_s
				\end{bmatrix}.
\end{align*}

and on the other hand, we obtain $\begin{bmatrix}
				\left.\check{m}_{t_N}^{\Delta_N}\right.^{\mathsf{T}}-m^{\mathsf{T}}\\
				t_N \left.\check{\kappa}_{t_N}^{\Delta_N}\right.^{\mathsf{T}}\\
				t_N \left.\check{\theta}_{t_N}^{\Delta_N}\right.^{\mathsf{T}}
			\end{bmatrix}$ is equal to
\begin{align*}
&\begin{bmatrix}
					1 & -\dfrac{1}{t_N^2}\displaystyle\sum_{k=1}^{N}\Delta_N Y_{t_{k-1}}  &-\dfrac{1}{t_N^2}\displaystyle\sum_{k=1}^{N}\Delta_NX_{t_{k-1}}^{\mathsf{T}}\\
					-\dfrac{1}{t_N^2}\displaystyle\sum_{k=1}^{N}\Delta_N Y_{t_{k-1}} & \dfrac{1}{t_N^3}\displaystyle\sum_{k=1}^{N}\Delta_N Y_{t_{k-1}}^2 &\dfrac{1}{t_N^3} \displaystyle\sum_{k=1}^{N}\Delta_N Y_{t_{k-1}}X_{t_{k-1}}^{\mathsf{T}}\\
					-\dfrac{1}{t_N^2}\displaystyle\sum_{k=1}^{N}\Delta_NX_{t_{k-1}} &  \dfrac{1}{t_N^3}\displaystyle\sum_{k=1}^{N}\Delta_N Y_{t_{k-1}}X_{t_{k-1}} & \dfrac{1}{t_N^3}\displaystyle\sum_{k=1}^{N}\Delta_N X_{t_{k-1}}X_{t_{k-1}}^{\mathsf{T}}
				\end{bmatrix}^{-1}\begin{bmatrix}
					\dfrac{1}{t_N}(X_{t_N}-X_{0})^{\mathsf{T}}\\
					-\dfrac{1}{t_N^2}\displaystyle\sum_{k=1}^{N} Y_{t_{k-1}}\left(X_{t_k}^{\mathsf{T}}-X_{t_{k-1}}^{\mathsf{T}}\right)\\
					-\dfrac{1}{t_N^2}\displaystyle\sum_{k=1}^{N}X_{t_{k-1}}\left(X_{t_k}^{\mathsf{T}}-X_{t_{k-1}}^{\mathsf{T}}\right)
				\end{bmatrix}\\
    &-\begin{bmatrix}
		1 & -\dfrac{1}{t_N^2}\displaystyle\int_0^{t_N} Y_s \mathrm{d}s  &-\dfrac{1}{t_N^2}\displaystyle\int_0^{t_N} X_s^{\mathsf{T}} \mathrm{d}s\\[8pt]
		-\dfrac{1}{t_N^2}\displaystyle\int_0^{t_N} Y_s \mathrm{d}s &\dfrac{1}{t_N^3} \displaystyle\int_0^{t_N} Y_s^2 \mathrm{d}s &\dfrac{1}{t_N^3} \displaystyle\int_0^{t_N} Y_s X_s^{\mathsf{T}}  \mathrm{d}s\\[8pt]
		-\dfrac{1}{t_N^2}\displaystyle\int_0^{t_N} X_s \mathrm{d}s  &   \dfrac{1}{t_N^3}\displaystyle\int_0^{t_N} Y_s X_s \mathrm{d}s  &  \dfrac{1}{t_N^3}\displaystyle\int_0^{t_N} X_s X_s^{\mathsf{T}} \mathrm{d}s 
	\end{bmatrix}^{-1}\begin{bmatrix}
	\dfrac{1}{t_N}(X_{t_N}-X_{0})^{\mathsf{T}}\\[2pt]
	-\dfrac{1}{t_N^2}\displaystyle\int_0^{t_N} Y_{s} \mathrm{d}X_{s}^{\mathsf{T}}\\[8pt]
	-\dfrac{1}{t_N^2}\displaystyle\int_0^{t_N} X_{s} \mathrm{d}X_{s}^{\mathsf{T}}
\end{bmatrix}.
\end{align*}
 Firstly, analogously to the proof of Theorem \ref{Théorème SCC continuous observations},
we collect all the present components into the functional vector $V((Y_t)_{t\in[0,t_N]},(X_t)_{t\in[0,t_N]},t_N)$ defined by relation \eqref{function V SCC} which converges in distribution to $V((\mathcal{Y}_t)_{t\in[0,1]},(\mathcal{X}_t)_{t\in[0,1]},1)$, as $N\to\infty$. Secondly, using Proposition \ref{Thm relation sum with integral SCC}, we obtain $$V_{t_N,\Delta_N}((Y_t)_{t\in[0,t_N]},(X_t)_{t\in[0,t_N]})-V((Y_t)_{t\in[0,t_N]},(X_t)_{t\in[0,t_N]},t_N)\stackrel{\mathbb P}{\longrightarrow}\mathbf{0},\quad \text{as }N\to\infty,$$ where $V_{t_N,\Delta_N}((Y_t)_{t\in[0,t_N]},(X_t)_{t\in[0,t_N]})$ is equal to
\begin{multline*}
	\left(\dfrac{1}{t_N}{Y}_{t_N},\,\dfrac{1}{t_N}{X}_{t_N},\,\dfrac{1}{t_N^2}\displaystyle\sum_{k=1}^N \Delta_N{Y}_{t_{k-1}},\,\dfrac{1}{t_N^2}\displaystyle\sum_{k=1}^N \Delta_N X_{t_{k-1}},\,\dfrac{1}{t_N^3}\displaystyle\sum_{k=1}^N \Delta_N Y_{t_{k-1}}^2,\,\dfrac{1}{t_N^3}\displaystyle\sum_{k=1}^N \Delta_N X_{t_{k-1}} X_{t_{k-1}}^{\mathsf{T}},\right.\\
  \left.\dfrac{1}{t_N^3}\displaystyle\sum_{k=1}^N \Delta_N Y_{t_{k-1}}X_{t_{k-1}},\,\dfrac{1}{t_N^2}\displaystyle\sum_{k=1}^N Y_{t_{k-1}}(Y_{t_k}-Y_{t_{k-1}}),\,\dfrac{1}{t_N^2}\displaystyle\sum_{k=1}^NY_{t_{k-1}}(X_{t_k}-X_{t_{k-1}}),\,\dfrac{1}{t_N^2}\displaystyle\sum_{k=1}^N X_{t_{k-1}}(X_{t_k}-X_{t_{k-1}})^{\mathsf{T}}\right).
\end{multline*}
Finally, since for all $N\in\N$, we have the non-singularity of the above inverted matrices, then by combining Lemma \ref{convergence (Xn,Yn) to (X,X) in law} with the continuous mapping theorem, we obtain $t_N(\check{\tau}_{t_N}^{\Delta_N}-\hat{\tau}_{t_N})\stackrel{\mathbb{P}}{\longrightarrow}\mathbf{0}$, as $N\to\infty$, in particular $\check{a}_{t_N}^{\Delta_N}-\hat{a}_{t_N}\stackrel{\mathbb{P}}{\longrightarrow}0$ and $\check{m}_{t_N}^{\Delta_N}-\hat{m}_{t_N}\stackrel{\mathbb{P}}{\longrightarrow}\mathbf{0}_n$ as $N\to\infty$. We complete the proof using the decomposition $\eqref{discrete-continuous+continuous-exact}$ and Theorem \ref{Théorème SCC continuous observations}. 
\end{proof}
The last part of this section is devoted to study the asymptotic normality relative to the discrete CLSE $\check{\tau}_{t_N}^{\Delta_N}$ in the special supercritical case introduced in Subsection \ref{A special supercritical case continuous observations}. 
\subsubsection{A special Supercritical case}
Let us recall that in this case we consider the model $\eqref{model01}$ with $a\in\R_{+}$, $b\in\R_{--}$ $m\in\R^n$, $\kappa\in\R^n$ and $\theta\in\mathcal{M}_n$ a diagonalizable negative definite matrix such that $b\in(\lambda_{\max}(\theta),0)$ and $\text{diag}(P^{-1}m)P^{-1}\kappa\in\R_-^n$.
\begin{theoreme}
	 Suppose that $N\Delta_N^{\frac{3}{2}}\to0$ and $\frac{N\Delta_N}{\ln(N\Delta_N^{3\slash2})}\to0$, as $N\to\infty$. Then
	\begin{equation}
		Q_{t_N}(\check{\tau}_{t_N}^{\Delta_N}-\tau)\stackrel{\mathcal{D}}{\longrightarrow}V^{-1}\eta\xi,\quad \text{as } N\to \infty,
		\label{distribution convergence special supercritical DISCRETE case}
	\end{equation}
	where $Q_{t_N}$, $V$, $\xi$ and $\eta$ are defined as in Theorem \ref{normality theorem supercritical case}. 
\label{normality theorem supercritical DISCRETE case}
\end{theoreme}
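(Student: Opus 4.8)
The plan is to reduce everything to the continuous-time result already proved, using the decomposition \eqref{discrete-continuous+continuous-exact}. Applying the normalization $Q_{t_N}$ gives
\begin{equation*}
Q_{t_N}(\check{\tau}_{t_N}^{\Delta_N}-\tau)=Q_{t_N}(\check{\tau}_{t_N}^{\Delta_N}-\hat{\tau}_{t_N})+Q_{t_N}(\hat{\tau}_{t_N}-\tau),
\end{equation*}
and I would treat the two summands separately. The second summand is exactly the continuous-time error evaluated at the (random but deterministic-limit) horizon $T=t_N\to\infty$, so Theorem \ref{normality theorem supercritical case} yields $Q_{t_N}(\hat{\tau}_{t_N}-\tau)\stackrel{\mathcal{D}}{\longrightarrow}V^{-1}\eta\xi$ as $N\to\infty$. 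Hence it will suffice to prove that the first summand vanishes, $Q_{t_N}(\check{\tau}_{t_N}^{\Delta_N}-\hat{\tau}_{t_N})\stackrel{\mathbb{P}}{\longrightarrow}\mathbf{0}$, and then conclude by Slutsky's lemma. I stress that Slutsky applies here even though the limit law $V^{-1}\eta\xi$ is only a mixed normal, precisely because the discrepancy term converges to the constant $\mathbf{0}$ in probability.

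To handle the first summand I would invoke the decomposition \eqref{Decomposition discrete - continuous SSC},
\begin{equation*}
Q_{t_N}(\check{\tau}_{t_N}^{\Delta_N}-\hat{\tau}_{t_N})=e^{-\lambda_{\min}(\theta)t_N}\left(\mathcal{J}_{t_N,\Delta_N}^{-1}-\mathcal{J}_{t_N}^{-1}\right)\left(e^{\lambda_{\min}(\theta)t_N}R_{t_N}\check{\phi}_{t_N,\Delta_N}\right)+\mathcal{J}_{t_N}^{-1}R_{t_N}\left(\check{\phi}_{t_N,\Delta_N}-f_{t_N}\right),
\end{equation*}
and show that both terms are $o_{\mathbb{P}}(1)$. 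For the second term, $\mathcal{J}_{t_N}^{-1}\stackrel{\mathbb{P}}{\longrightarrow}V^{-1}$ has already been established in the proof of Theorem \ref{normality theorem supercritical case}, while Proposition \ref{Thm relation sum with integral SSC} gives $R_{t_N}(\check{\phi}_{t_N,\Delta_N}-f_{t_N})\stackrel{\mathbb{P}}{\longrightarrow}\mathbf{0}$ under the hypotheses $N\Delta_N^{3/2}\to0$ and $\frac{N\Delta_N}{\ln(N\Delta_N^{3/2})}\to0$; their product is therefore $o_{\mathbb{P}}(1)$. For the first term, Proposition \ref{Thm relation sum with integral SSC} also provides $e^{-\lambda_{\min}(\theta)t_N}(\mathcal{J}_{t_N,\Delta_N}^{-1}-\mathcal{J}_{t_N}^{-1})\stackrel{\mathbb{P}}{\longrightarrow}\mathbf{0}$, so the only thing left to check is that the auxiliary factor $e^{\lambda_{\min}(\theta)t_N}R_{t_N}\check{\phi}_{t_N,\Delta_N}$ is bounded in probability.

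For this last point I would split $e^{\lambda_{\min}(\theta)t_N}R_{t_N}\check{\phi}_{t_N,\Delta_N}=e^{\lambda_{\min}(\theta)t_N}R_{t_N}f_{t_N}+e^{\lambda_{\min}(\theta)t_N}R_{t_N}(\check{\phi}_{t_N,\Delta_N}-f_{t_N})$. The second piece is $o_{\mathbb{P}}(1)$ by the above together with $e^{\lambda_{\min}(\theta)t_N}\to0$. For the first piece I would use $f_{t_N}=G_{t_N}\hat{\tau}_{t_N}$ and the identity $R_{t_N}G_{t_N}=\mathcal{J}_{t_N}^{-1}Q_{t_N}$ to write $e^{\lambda_{\min}(\theta)t_N}R_{t_N}f_{t_N}=\mathcal{J}_{t_N}^{-1}\left(e^{\lambda_{\min}(\theta)t_N}Q_{t_N}(\hat{\tau}_{t_N}-\tau)+e^{\lambda_{\min}(\theta)t_N}Q_{t_N}\tau\right)$. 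Since $Q_{t_N}(\hat{\tau}_{t_N}-\tau)$ is tight by Theorem \ref{normality theorem supercritical case}, and a direct inspection of the diagonal entries of $Q_{t_N}$, using $\lambda_{\min}(\theta)<b<0$, shows that every component of $e^{\lambda_{\min}(\theta)t_N}Q_{t_N}\tau$ tends to zero, this first piece is in fact $o_{\mathbb{P}}(1)$, which is more than enough for boundedness.

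The main obstacle is exactly this balancing of exponential rates: the inserted factors $e^{\pm\lambda_{\min}(\theta)t_N}$ are what allow the slow decay of $\mathcal{J}_{t_N,\Delta_N}^{-1}-\mathcal{J}_{t_N}^{-1}$ to beat the exponential blow-up carried by $R_{t_N}$, and this is where the two sharp conditions $N\Delta_N^{3/2}\to0$ and $\frac{N\Delta_N}{\ln(N\Delta_N^{3/2})}\to0$ are used — the latter being precisely what forces $e^{-\lambda_{\min}(\theta)t_N}N\Delta_N^{3/2}\to0$ in the proof of Proposition \ref{Thm relation sum with integral SSC}. Once all pieces are shown to be $o_{\mathbb{P}}(1)$, so that $Q_{t_N}(\check{\tau}_{t_N}^{\Delta_N}-\hat{\tau}_{t_N})\stackrel{\mathbb{P}}{\longrightarrow}\mathbf{0}$, Slutsky's lemma combined with $Q_{t_N}(\hat{\tau}_{t_N}-\tau)\stackrel{\mathcal{D}}{\longrightarrow}V^{-1}\eta\xi$ completes the proof of \eqref{distribution convergence special supercritical DISCRETE case}.
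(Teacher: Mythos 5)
Your proposal is correct and follows essentially the same route as the paper: the decomposition \eqref{discrete-continuous+continuous-exact} combined with \eqref{Decomposition discrete - continuous SSC}, Proposition \ref{Thm relation sum with integral SSC} for both discrepancy terms, the auxiliary bound on $e^{\lambda_{\min}(\theta)t_N}R_{t_N}\check{\phi}_{t_N,\Delta_N}$ via $e^{\lambda_{\min}(\theta)t_N}Q_{t_N}\to\mathbf{0}$, and Slutsky's lemma (the paper controls $R_{t_N}f_{t_N}$ through $f_{t_N}=G_{t_N}\tau+h_{t_N}$ and the tightness of $R_{t_N}h_{t_N}$, whereas you use $f_{t_N}=G_{t_N}\hat{\tau}_{t_N}$ and the tightness of $Q_{t_N}(\hat{\tau}_{t_N}-\tau)$, which is equivalent). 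The only blemish is the identity $R_{t_N}G_{t_N}=\mathcal{J}_{t_N}^{-1}Q_{t_N}$, which should read $R_{t_N}G_{t_N}=\mathcal{J}_{t_N}Q_{t_N}$; since both $\mathcal{J}_{t_N}$ and $\mathcal{J}_{t_N}^{-1}$ are $O_{\mathbb{P}}(1)$, this does not affect the conclusion.
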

\begin{proof}
Using the matrix decomposition \eqref{Decomposition discrete - continuous SSC}, since we have $\mathcal{J}_{t_N}^{-1}\stackrel{\mathbb{P}}{\longrightarrow}V^{-1}$, then thanks to Proposition \ref{Thm relation sum with integral SSC}, we deduce that the second term $\mathcal{J}_{t_N}^{-1}R_{t_N}\left(\check{\phi}_{t_N,\Delta_N}-f_{t_N}\right)\stackrel{\mathbb{P}}{\longrightarrow}\mathbf{0}$, as $N\to\infty$. Furthermore, since we have we have \begin{align*}
e^{\lambda_{\min}(\theta)t_N}R_{t_N}\check{\phi}_{t_N,\Delta_N}=e^{\lambda_{\min}(\theta)t_N}R_{t_N}\left(\check{\phi}_{t_N,\Delta_N}-f_{t_N}\right)+\mathcal{J}_{t_N}e^{\lambda_{\min}(\theta)t_N}Q_{t_N}\tau+e^{\lambda_{\min}(\theta)t_N}R_{t_N}h_{t_N},
\end{align*}
where $e^{\lambda_{\min}(\theta)t_N}Q_{t_N}\to\mathbf{0}$, $\mathcal{J}_{t_N}\stackrel{\mathbb{P}}{\longrightarrow}V$ and $R_{t_N}h_{t_N}$ converges in distribution as $N\to\infty$, then by Proposition \ref{Thm relation sum with integral SSC}, we get $e^{\lambda_{\min}(\theta)t_N}R_{t_N}\check{\phi}_{t_N,\Delta_N}\stackrel{\mathbb P}{\longrightarrow}\mathbf{0}$, as $N\to\infty$.
Finally, it is sufficient to prove that $e^{-\lambda_{\min}(\theta)t_N}\left(\mathcal{J}_{t_N,\Delta_N}^{-1}-\mathcal{J}_{t_N}^{-1}\right)\stackrel{\mathbb P}{\longrightarrow}\mathbf{0}$, as $N\to\infty$. To do that, we use 
 the mean value theorem on the differentiable function $f$ defined on the space of invertible matrices by $f(A)=e_k^{\mathsf{T}}A^{-1}e_{k'}$ where $e_k$ and $e_{k'}$ are two arbitrary canonical basis vectors to get
$
\left\vert f\left(\mathcal{J}_{t_N,\Delta_N}^{(i)}\right)-f\left(\mathcal{J}_{t_N}^{(i)}\right)\right\vert\leq \norm{\mathcal{J}_{t_N,\Delta_N}^{(i)}-\mathcal{J}_{t_N}^{(i)}}_1 \underset{A\in\mathcal{A}_i}{\sup}\norm{\mathrm{D}f(A)}_1$, with $\mathcal{A}_i:= \left\lbrace \lambda\mathcal{J}_{t_N}^{(i)}+(1-\lambda)\mathcal{J}_{t_N,\Delta_N}^{(i)},\lambda\in[0,1]\right\rbrace$. Note that, since $\mathcal{J}_{t_N}^{(i)}$ and $\mathcal{J}_{t_N,\Delta_N}^{(i)}$ converge almost surely to the same invertible matrix $V$, then for sufficient large values of $N$, $\mathcal{A}_i$ is a subset of invertible matrices. Now, thanks to relation (61) in \cite{matrix cookbook}, we have $\mathrm{D}f(A)=\left(A^{-1}e_{k'}e_{k}^{\mathsf{T}}A^{-1}\right)^{\mathsf{T}}$ and as $\underset{A\in\mathcal{A}_i}{\sup}\norm{\mathrm{D}f(A)}_1$ converges almost surely to $\norm{\mathrm{D}f(V_i)}_1$. Then, we obtain by Theorem \ref{Thm relation sum with integral SSC}, $e^{-\lambda_{\min}(\theta)t_N}\left\vert e_k^{\mathsf{T}}\left(\mathcal{J}_{t_N,\Delta_N}^{-1}-\mathcal{J}_{t_N}^{-1}\right)e_{k'}\right\vert\stackrel{a.s.}{\longrightarrow}\mathbf{0}$, as $N\to\infty$ and by considering all the possible combinations of $e_k$ and $e_{k'}$, we get the desired result. Finally, by combining relation $\eqref{discrete-continuous+continuous-exact}$ with Theorem \ref{normality theorem supercritical case}, we complete the proof.   
\end{proof}

\vspace*{0.5cm}
\appendix
\section{Moment results}

In the first appendix, we are going to establish two propositions about the mixed moments of the $\mathit{AD}(1,n)$ processes given by the model \eqref{model01}. Let us introduce first the process given for all $t\in\R_+$ by $\tilde{X}_t:=PX_t$, where its $i^{\text{th}}$ component $\tilde{X}_t^i$ satisfies 
	\begin{equation*}
		\mathrm{d}\tilde{X}_t^i=(\tilde{m}_i-\tilde{\kappa}_iY_t-\lambda_i\tilde{X}_t^i)\mathrm{d}t+\sqrt{Y_t}\check{\rho}_{i,\mathit{H}}\mathrm{d}B_t,
	\end{equation*}
with  $\mathit{H}=\{1,2,\ldots,d\}$, $\tilde{m}=P m$, $\tilde{\kappa}=P\kappa$, $D=\text{diag}(\lambda)=P\theta P^{-1}$, $\check{\rho}=P\tilde{\rho}$ and $\check{\sigma}_i^2=\sum_{j=1}^d\check{\rho}_{i,j}^2$. We pay the attention that $\mathbb{E}\left(Y_t^k\prod_{i=1}^n(\tilde{X}_t^i)^{\ell_i}\right)$ behaves with the same convergence and expansion rates as $\mathbb{E}\left(Y_t^k\prod_{i=1}^n(X_t^i)^{\ell_i}\right)$, as $t\to\infty$, and that, in the ergodic case, the finiteness of $\mathbb{E}\left(Y_\infty^k\prod_{i=1}^n(\tilde{X}_\infty^i)^{\ell_i}\right)$ implies the one of $\mathbb{E}\left(Y_\infty^k\prod_{i=1}^n(X_\infty^i)^{\ell_i}\right)$ for all $k,\ell_1,\ldots\ell_n\in\N$. Since such properties are important and used in this paper, we are going in the following to establish the moment results on the processes $(Y,\tilde{X})^{\mathsf T}$ instead of $(Y,X)^{\mathsf T}$.
\begin{proposition}\label{moment result on time t}
	Let us consider the affine diffusion model $\eqref{model01}$ with $a\in\R_{+}$, $b\in\R$, $m,\kappa\in\R^n$, $\theta\in\mathcal{M}_n$ and the random initial value $(Y_0,X_0)$ independent of $(B_t)_{t\in\R_+}$ satisfying $\mathbb{P}(Y_0\in\R_+)=1$ and $\mathbb{E}\left(Y_0^{r_0}\prod_{i=1}^{n}(X_0^i)^{r _i}\right)<\infty$, for some $r_0,r_1,\ldots,r_n\in\N$. Then, for all $k\in\lbrace0,\ldots,r_0\rbrace$ and $\ell_i\in\lbrace0,\ldots,r_i\rbrace$, we have $\mathbb{E}\left(Y_t^{k}\prod_{q=1}^n(\tilde{X}_t^q)^{\ell_q}\right)$ is equal to 
	\begin{align*}
		&e^{-\left(bk+\sum_{j=1}^{n}\lambda_j\ell_j\right)t}\ \mathbb{E}\left(Y_0^{k}\prod_{q=1}^n(\tilde{X}_0^q)^{\ell_q}\right)+\left(ak+\dfrac{\sigma_1^2}{2}k(k-1)\right)\int_0^t e^{-\left(bk+\sum_{j=1}^{n}\lambda_j\ell_j\right)(t-u)}\\  &\times\mathbb{E}\left(Y_u^{k-1}\prod_{q=1}^n(\tilde{X}_u^q)^{\ell_q}\right)\mathrm{d}u+\sum_{p=1}^n\ell_p\left(\tilde{m}_p +k\sigma_1\check{\rho}_{p1} \right)\int_0^t e^{-\left(bk+\sum_{j=1}^{n}\lambda_j\ell_j\right)(t-u)}\\ &\times\mathbb{E}\left(Y_u^k(\tilde{X}_u^p)^{\ell_p-1}\prod_{\substack{q=1\\q\neq p}}^n(\tilde{X}_u^q)^{\ell_q}\right)\mathrm{d}u-\sum_{p=1}^n \tilde{\kappa}_p \ell_p \int_{0}^t e^{-\left(bk+\sum_{j=1}^{n}\lambda_j\ell_j\right)(t-u)}\\ 
		&\times\mathbb{E}\left(Y_u^{k+1} (\tilde{X}_u^p)^{\ell_p-1}\prod_{\substack{q=1\\q\neq p}}^n(\tilde{X}_u^q)^{\ell_q}\right)\mathrm{d}u+\sum_{p=1}^n\dfrac{\check{\sigma}_{p+1}^2}{2}\ell_p(\ell_p-1)\int_{0}^t e^{-\left(bk+\sum_{j=1}^{n}\lambda_j\ell_j\right)(t-u)}\\ &\times\mathbb{E}\left(Y_u^{k+1}(\tilde{X}_u^p)^{\ell_p-2}\prod_{\substack{q=1\\q\neq p}}^n(\tilde{X}_u^q)^{\ell_q}\right)\mathrm{d}u+\dfrac{1}{2}\sum_{p=1}^n\sum_{\substack{i=1\\ i\neq p}}^n\check{\rho}_{i,\mathit{H}}\left(\check{\rho}_{p,\mathit{H}}\right)^{\mathsf{T}}\ell_i\ell_p\\
		&\times \int_0^t e^{-\left(bk+\sum_{j=1}^{n}\lambda_j\ell_j\right)(t-u)}\ \mathbb{E}\left(Y_u^{k+1}(\tilde{X}_u^i)^{\ell_i-1}(\tilde{X}_u^p)^{\ell_p-1}\prod_{\substack{q=1\\q\neq i,p}}^n(\tilde{X}_u^q)^{\ell_q}\right)\mathrm{d}u,
	\end{align*}
 with the convention $\mathbb{E}\left(Y_t^k\prod_{q=1}^n(\tilde{X}_t^q)^{\ell_q}\right):=0$ when one of the powers $k,\ell_1,\ldots,\ell_n$ is negative. 
\end{proposition}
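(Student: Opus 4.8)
The plan is to apply It\^o's formula to the monomial $f(y,\tilde x)=y^{k}\prod_{q=1}^{n}(\tilde x^{q})^{\ell_q}$ along the diffusion $(Y_t,\tilde X_t)$ and to recognise that the drift it produces is exactly the right-hand side of the claimed identity, written in differential form. Let $\mathcal{A}$ denote the generator of $(Y,\tilde X)$; its coefficients are read off from the CIR dynamics of $Y$ (drift $a-bY$, diffusion $\sigma_1^{2}Y$, since $\rho$ is lower triangular and hence $\rho_{11}=\sigma_1$) and from the componentwise dynamics of $\tilde X$, for which $\mathrm{d}\langle Y,\tilde X^{i}\rangle=\sigma_1\check\rho_{i,1}Y\,\mathrm{d}t$ and $\mathrm{d}\langle \tilde X^{i},\tilde X^{p}\rangle=\check\rho_{i,H}\check\rho_{p,H}^{\mathsf{T}}Y\,\mathrm{d}t$. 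Computing the first and second partial derivatives of $f$ and regrouping, I would obtain $\mathcal{A}f=-c\,f+g$, where $c:=bk+\sum_{j=1}^{n}\lambda_j\ell_j$ is the coefficient of the exponential factor and $g$ collects precisely the six families of lower-order monomials appearing in lines (2)--(6) of the statement, each carried by its stated coefficient. The combinatorial factors $k$, $\ell_p$, $k(k-1)$ and $\ell_p(\ell_p-1)$ multiplying each family come from differentiating the relevant power, and they automatically enforce the stated convention that a term vanishes as soon as one of its exponents becomes negative.

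With this decomposition I would introduce the integrating factor $e^{cs}$, so that It\^o's formula reads $\mathrm{d}\bigl(e^{cs}f(Y_s,\tilde X_s)\bigr)=e^{cs}g(Y_s,\tilde X_s)\,\mathrm{d}s+e^{cs}\,\mathrm{d}M_s$, where $M$ is the local martingale built from the stochastic integrals against $B$. Integrating on $[0,t]$, taking expectations and discarding the martingale contribution gives $e^{ct}\mathbb{E}\bigl(f(Y_t,\tilde X_t)\bigr)=\mathbb{E}\bigl(f(Y_0,\tilde X_0)\bigr)+\int_0^{t}e^{cu}\,\mathbb{E}\bigl(g(Y_u,\tilde X_u)\bigr)\,\mathrm{d}u$; multiplying by $e^{-ct}$ and inserting the explicit form of $g$ reproduces the asserted formula, the factor $e^{-c(t-u)}$ appearing in each integral.

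The delicate point, on which I would spend most of the effort, is the legitimacy of dropping the expectation of the martingale term and of exchanging the limit with the integral; this rests on the a priori finiteness of every mixed moment that occurs. I would first localise with the stopping times $\tau_R:=\inf\{s:\,Y_s+\norm{\tilde X_s}_1\ge R\}$, for which the stopped integrand is bounded and the stopped martingale has zero mean, and then let $R\to\infty$. To justify this passage I would prove by induction that $\mathbb{E}\bigl(Y_t^{k}\prod_q(\tilde X_t^{q})^{\ell_q}\bigr)$ is finite and locally bounded in $t$ for all $k\le r_0$ and $\ell_q\le r_q$, starting from the integrability of the initial datum. The subtlety is that the term $-\sum_p\tilde\kappa_p\ell_p\,\mathbb{E}\bigl(Y_u^{k+1}(\tilde X_u^{p})^{\ell_p-1}\cdots\bigr)$ raises the power of $Y$ by one while lowering a power of $\tilde X$, so that a single induction on the total degree does not close; I would therefore run a nested induction, the outer loop on the total degree $k+\sum_q\ell_q$ and the inner loop decreasing $\sum_q\ell_q$, so that each moment is estimated through moments already under control, the pure moments of the CIR factor $Y$ being supplied by \cite[Proposition 3]{Alaya} and \cite[Proposition 4]{Alaya}. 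A Gr\"onwall argument on the resulting integral inequality then yields the required finiteness, after which dominated convergence validates the limit $R\to\infty$ and the formula follows.
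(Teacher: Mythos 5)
Your proof follows essentially the same route as the paper's: apply It\^o's formula to the monomial $Y_t^{k}\prod_{q}(\tilde X_t^{q})^{\ell_q}$, take expectations to eliminate the stochastic integrals, and solve the resulting linear integral equation with the integrating factor $e^{(bk+\sum_{j}\lambda_j\ell_j)t}$. The only difference is that you justify discarding the martingale term by localization and a nested induction on the finiteness of the mixed moments, where the paper simply invokes the square-integrable-martingale property of the Brownian integrals; this is a welcome extra precision rather than a genuinely different argument.
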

\begin{remarque}
	Under the same assumptions of Proposition \ref{moment result on time t}, we get the following special moments results
	$$\mathbb{E}(Y_t)=e^{-bt}\mathbb{E}(Y_0)+a\displaystyle\int_0^te^{-b(t-u)}\mathrm{d}u,\quad \mathbb{E}(Y^2_t)=e^{-2bt}\mathbb{E}(Y_0^2)+(2a+\sigma_1^2)\displaystyle\int_0^te^{-2b(t-u)}\mathbb{E}(Y_u)\mathrm{d}u,$$ 
	$$ \mathbb{E}(Y^3_t)=e^{-3bt}\mathbb{E}(Y_0^3)+3(a+\sigma_1^2)\displaystyle\int_0^te^{-3b(t-u)}\mathbb{E}(Y^2_u)\mathrm{d}u,$$ $$\mathbb{E}(\tilde{X}^i_t)=e^{-\lambda_it}\mathbb{E}(\tilde{X}^i_0)+\tilde{m}_i\displaystyle\int_0^te^{-\lambda_i(t-u)}\mathrm{d}u-\tilde{\kappa}_i\int_0^te^{-\lambda_i(t-u)}\mathbb{E}(Y_u)\mathrm{d}u,$$ \begin{align*}\mathbb{E}(Y_t\tilde{X}^i_t)=&\,e^{-(b+\lambda_i)t}\mathbb{E}(Y_0\tilde{X}^i_0)+a\displaystyle\int_0^te^{-(b+\lambda_i)(t-u)}\mathbb{E}(\tilde{X}^i_u)\mathrm{d}u\\&+(\tilde{m}_i+\sigma_1\check{\rho}_{i1})\displaystyle\int_0^te^{-(b+\lambda_i)(t-u)}\mathbb{E}(Y_u)\mathrm{d}u-\tilde{\kappa}_i\int_0^te^{-(b+\lambda_i)(t-u)}\mathbb{E}(Y^2_u)\mathrm{d}u,\end{align*}
	\begin{align*}\mathbb{E}(Y_t^2\tilde{X}^i_t)=&\,e^{-(2b+\lambda_i)t}\mathbb{E}(Y_0^2\tilde{X}^i_0)+(2a+\sigma_1^2)\displaystyle\int_0^te^{-(2b+\lambda_i)(t-u)}\mathbb{E}(Y_u\tilde{X}^i_u)\mathrm{d}u\\&+(\tilde{m}_i+2\sigma_1\check{\rho}_{i1})\displaystyle\int_0^te^{-(2b+\lambda_i)(t-u)}\mathbb{E}(Y_u^2)\mathrm{d}u-\tilde{\kappa}_i\int_0^te^{-(2b+\lambda_i)(t-u)}\mathbb{E}(Y^3_u)\mathrm{d}u,\end{align*}
	
	\begin{align*}\mathbb{E}((\tilde{X}^i_t)^2)=&\,e^{-2\lambda_it}\mathbb{E}((\tilde{X}^i_0)^2)+2\tilde{m}_i\displaystyle\int_0^te^{-2\lambda_i(t-u)}\mathbb{E}(\tilde{X}_u^i)\mathrm{d}u-2\tilde{\kappa}_i\int_0^te^{-2\lambda_i(t-u)}\mathbb{E}(Y_u\tilde{X}_u^i)\mathrm{d}u\\&+\check{\sigma}^2_{i+1}\int_0^te^{-2\lambda_i(t-u)}\mathbb{E}(Y_u)\mathrm{d}u,\end{align*}
	\begin{align*}
		\mathbb{E}(\tilde{X}^i_t\tilde{X}^j_t)=&\,e^{-(\lambda_i+\lambda_j)t}\mathbb{E}(\tilde{X}^i_0\tilde{X}^j_0)+\tilde{m}_i\displaystyle\int_0^te^{-(\lambda_i+\lambda_j)(t-u)}\mathbb{E}(\tilde{X}_u^j)\mathrm{d}u-\tilde{\kappa}_i\int_0^te^{-(\lambda_i+\lambda_j)(t-u)}\mathbb{E}(Y_u\tilde{X}_u^j)\mathrm{d}u\\&+\tilde{m}_j\displaystyle\int_0^te^{-(\lambda_i+\lambda_j)(t-u)}\mathbb{E}(\tilde{X}_u^i)\mathrm{d}u-\tilde{\kappa}_j\int_0^te^{-(\lambda_i+\lambda_j)(t-u)}\mathbb{E}(Y_u\tilde{X}_u^i)\mathrm{d}u\\&+\check{\rho}_{i,\mathit{H}}\left(\check{\rho}_{j,\mathit{H}}\right)^{\mathsf{T}}\int_0^te^{-(\lambda_i+\lambda_j)(t-u)}\mathbb{E}(Y_u)\mathrm{d}u,
	\end{align*}
	\begin{align*}\mathbb{E}(Y_t(\tilde{X}^i_t)^2)=&\,e^{-(b+2\lambda_i)t}\mathbb{E}(Y_0(\tilde{X}^i_0)^2)+a\displaystyle\int_0^te^{-(b+2\lambda_i)(t-u)}\mathbb{E}((\tilde{X}^i_u)^2)\mathrm{d}u\\&+2(\tilde{m}_i+\sigma_1\check{\rho}_{i1})\displaystyle\int_0^te^{-(b+2\lambda_i)(t-u)}\mathbb{E}(Y_u\tilde{X}_u^i)\mathrm{d}u-2\tilde{\kappa}_i\int_0^te^{-(b+2\lambda_i)(t-u)}\mathbb{E}(Y^2_u\tilde{X}_u^i)\mathrm{d}u\\&+\check{\sigma}^2_{i+1}\int_0^te^{-(b+2\lambda_i)(t-u)}\mathbb{E}(Y^2_u)\mathrm{d}u,\end{align*}
	and	
	\begin{align*}
		\mathbb{E}(Y_t\tilde{X}^i_t\tilde{X}^j_t)=&\,e^{-(b+\lambda_i+\lambda_j)t}\mathbb{E}(Y_0\tilde{X}^i_0\tilde{X}^j_0)+a\displaystyle\int_0^te^{-(b+\lambda_i+\lambda_j)(t-u)}\mathbb{E}(\tilde{X}^i_u\tilde{X}^j_u)\mathrm{d}u\\&+(\tilde{m}_i+\sigma_1\check{\rho}_{i1})\displaystyle\int_0^te^{-(b+\lambda_i+\lambda_j)(t-u)}\mathbb{E}(Y_u\tilde{X}_u^j)\mathrm{d}u-\tilde{\kappa}_i\int_0^te^{-(b+\lambda_i+\lambda_j)(t-u)}\mathbb{E}(Y^2_u\tilde{X}_u^j)\mathrm{d}u\\&+(\tilde{m}_j+\sigma_1\check{\rho}_{j1})\displaystyle\int_0^te^{-(b+\lambda_i+\lambda_j)(t-u)}\mathbb{E}(Y_u\tilde{X}_u^i)\mathrm{d}u-\tilde{\kappa}_j\int_0^te^{-(b+\lambda_i+\lambda_j)(t-u)}\mathbb{E}(Y^2_u\tilde{X}_u^i)\mathrm{d}u\\&+\check{\rho}_{i,\mathit{H}}\left(\check{\rho}_{j,\mathit{H}}\right)^{\mathsf{T}}\int_0^te^{-(b+\lambda_i+\lambda_j)(t-u)}\mathbb{E}(Y^2_u)\mathrm{d}u,\end{align*}
	\label{Remark moment resulty on time t}
\end{remarque}
\begin{proof}(Proposition \ref{moment result on time t})
By applying Itô's Formula on the product $Y_t^k\prod_{q=1}^n(\tilde{X}_t^q)^{\ell_q}$, we deduce by simple
integration and by the continuous square integrable martingales property of the Brownian integrals that  $\mathbb{E}\left(Y_t^k\prod_{q=1}^n(\tilde{X}_t^q)^{\ell_q}\right)-\mathbb{E}\left(Y_0^k\prod_{q=1}^n(\tilde{X}_0^q)^{\ell_q}\right)$ is equal to
	\begin{align}
  \label{Ito for mixed moments}
 \begin{split}
		&-\left(bk+\sum_{j=1}^n \lambda_j \ell_j\right)\displaystyle\int_0^t\mathbb{E}\left(Y_u^k\prod_{q=1}^n(\tilde{X}_u^q)^{\ell_q}\right)\mathrm{d}u+\left(ak+\dfrac{\sigma_1^2}{2}k(k-1)\right)\displaystyle\int_0^t \mathbb{E}\left(Y_u^{k-1}\prod_{q=1}^n(\tilde{X}_u^q)^{\ell_q}\right)\mathrm{d}u\\&+\sum_{j=1}^n\displaystyle\int_0^t\left[\ell_j\left(\tilde{m}_j +k\sigma_1\check{\rho}_{j1} \right)\mathbb{E}\left(Y_u^k(\tilde{X}_u^j)^{\ell_j-1}\prod_{\substack{q=1\\q\neq j}}^n(\tilde{X}_u^q)^{\ell_q}\right)- \tilde{\kappa}_j \ell_j \mathbb{E}\left(Y_u^{k+1} (\tilde{X}_u^j)^{\ell_j-1}\prod_{\substack{q=1\\q\neq j}}^n(\tilde{X}_u^q)^{\ell_q}\right)\right.\\&\qquad\qquad+\left.\dfrac{\check{\sigma}_{j+1}^2}{2}\ell_j(\ell_j-1)\mathbb{E}\left(Y_u^{k+1}(\tilde{X}_u^j)^{\ell_j-2}\prod_{\substack{q=1\\q\neq j}}^n(\tilde{X}_u^q)^{\ell_q}\right)+\dfrac{1}{2}\sum_{\substack{i=1\\ i\neq j}}^n\check{\rho}_{i,\mathit{H}}\left(\check{\rho}_{j,\mathit{H}}\right)^{\mathsf{T}}\ell_i\ell_j\right.\\
		&\qquad\qquad\left.\times\mathbb{E}\left(Y_u^{k+1}(\tilde{X}_u^i)^{\ell_i-1}(\tilde{X}_u^j)^{\ell_j-1}\prod_{\substack{q=1\\q\neq i,j}}^n(\tilde{X}_u^q)^{\ell_q}\right)\right]\mathrm{d}u.
   \end{split}
	\end{align}
Finally, by introducing the functions $f(k,\ell_1,\ell_2,\ldots,\ell_n,t):=\mathbb{E}\left(Y_t^k\prod_{q=1}^n(\tilde{X}_t^q)^{\ell_q}\right),$ for all $t\in\R_+$ and $ k,\ell_1,\ell_2,\ldots,\ell_n\in\N$ and by combining the last equation with the time derivative $e^{\left(bk+\sum_{j=1}^{n}\lambda_j\ell_j\right)t}f$  given by 
$$\left(bk+\sum_{j=1}^{n}\lambda_j\ell_j\right)e^{\left(bk+\sum_{j=1}^{n}\lambda_j\ell_j\right)t}f(k,\ell_1,\ell_2,\ldots,\ell_n,t)+e^{\left(bk+\sum_{j=1}^{n}\lambda_j\ell_j\right)t}\dfrac{\partial f}{\partial t}(k,\ell_1,\ell_2,\ldots,\ell_n,t),$$
we deduce, by simple integration, the induction result given in the proposition.
\end{proof}

\begin{proposition}\label{moment result}
	Let us consider the affine diffusion model $\eqref{model01}$ with $a,b\in\R_{++}$ $m,\kappa\in\R^n$, $\theta\in\mathcal{M}_n$ a positive definite matrix. Let $(Y_{\infty},X_{\infty})$ the random vector defined by relation \eqref{loi limite Stationarity theorem}. Then, for all $k\in\N$ and $\ell=(\ell_1,\ldots,\ell_n)\in\N^n$ the mixed expectation $\mathbb{E}\left(Y_\infty^k\prod_{q=1}^n(\tilde{X}_\infty^q)^{\ell_q}\right)$ is finite and equals to
	
	\begin{align*}
		&\dfrac{1}{bk+\sum_{j=1}^n \lambda_j \ell_j}\left[\left(ak+\dfrac{\sigma_1^2}{2}k(k-1)\right)\mathbb{E}\left(Y_\infty^{k-1}\prod_{q=1}^n(\tilde{X}_\infty^q)^{\ell_q}\right)+\sum_{j=1}^n\Biggl(\ell_j\left(\tilde{m}_j +k\sigma_1\check{\rho}_{j1} \right)\right.\\&\times\mathbb{E}\left(Y_\infty^k(\tilde{X}_\infty^j)^{\ell_j-1}\prod_{\substack{q=1\\q\neq j}}^n(\tilde{X}_\infty^q)^{\ell_q}\right)-\tilde{\kappa}_j \ell_j \mathbb{E}\left(Y_\infty^{k+1} (\tilde{X}_\infty^j)^{\ell_j-1}\prod_{\substack{q=1\\q\neq j}}^n(\tilde{X}_\infty^q)^{\ell_q}\right)+\dfrac{\check{\sigma}_{j+1}^2}{2}\ell_j(\ell_j-1)\\&\left.\times\mathbb{E}\left(Y_\infty^{k+1}(\tilde{X}_\infty^j)^{\ell_j-2}\prod_{\substack{q=1\\q\neq j}}^n(\tilde{X}_\infty^q)^{\ell_q}\right)\left.+\dfrac{1}{2}\sum_{\substack{i=1\\ i\neq j}}^n \check{\rho}_{i,\mathit{H}}\left(\check{\rho}_{j,\mathit{H}}\right)^{\mathsf{T}}\ell_i\ell_j \right.
		\left.\mathbb{E}\left(Y_\infty^{k+1}(\tilde{X}_\infty^i)^{\ell_i-1}(\tilde{X}_\infty^j)^{\ell_j-1}\prod_{\substack{q=1\\q\neq i,j}}^n(\tilde{X}_\infty^q)^{\ell_q}\right)\right)\right],
	\end{align*}
	 with the convention $\mathbb{E}\left(Y_\infty^k\prod_{q=1}^n(\tilde{X}_\infty^q)^{\ell_q}\right):=0$ if one of the powers $k,\ell_1,\ldots,\ell_n$ is negative. 
\end{proposition}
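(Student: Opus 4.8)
The plan is to derive the stationary identity by letting $t\to\infty$ in the time-dependent formula of Proposition \ref{moment result on time t}. Set $c_{k,\ell}:=bk+\sum_{j=1}^n\lambda_j\ell_j$; in the subcritical regime $b\in\mathbb{R}_{++}$ and the positive definiteness of $\theta$ give $\lambda_j>0$ for every $j$, so $c_{k,\ell}>0$ as soon as $(k,\ell)\neq(0,\mathbf{0}_n)$, which is what makes the exponential factors decay. The proof will proceed by induction, but some care is needed in the order of recursion: the term $\tilde\kappa_j\ell_j\,\mathbb{E}(Y_\infty^{k+1}(\tilde X_\infty^j)^{\ell_j-1}\prod_{q\neq j}(\tilde X_\infty^q)^{\ell_q})$ has the same total degree $k+\sum_j\ell_j$ as the left-hand side, so an induction on total degree alone fails. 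Instead I would induct lexicographically on the pair $(\sum_{j}\ell_j,\,k)$: every moment on the right-hand side either lowers $\sum_j\ell_j$ (all the terms that remove a factor $\tilde X$ or trade a factor $\tilde X$ for a factor $Y$) or keeps $\sum_j\ell_j$ fixed while lowering $k$ (the term $\mathbb{E}(Y_\infty^{k-1}\cdots)$, whose coefficient $ak+\tfrac{\sigma_1^2}{2}k(k-1)$ vanishes at $k=0$). Both base cases are immediate, the order-$0$ moment being $\mathbb{E}(1)=1$.

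The analytic engine is the elementary fact that if $g$ is bounded on $\mathbb{R}_+$ with $g(u)\to g_\infty$ and $c>0$, then $\int_0^t e^{-c(t-u)}g(u)\,\mathrm{d}u\to g_\infty/c$ as $t\to\infty$. By the induction hypothesis every lower-order moment function $u\mapsto\mathbb{E}(Y_u^{k'}\prod_q(\tilde X_u^q)^{\ell'_q})$ appearing inside the integrals of Proposition \ref{moment result on time t} is bounded and converges to the corresponding finite stationary moment; applying the limit lemma to each convolution term and noting that the leading contribution $e^{-c_{k,\ell}t}\,\mathbb{E}(Y_0^k\prod_q(\tilde X_0^q)^{\ell_q})$ vanishes, I obtain that $\mathbb{E}(Y_t^k\prod_q(\tilde X_t^q)^{\ell_q})$ converges, and that its limit equals $c_{k,\ell}^{-1}$ times the bracketed sum of stationary moments. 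This is exactly the announced recursion, and it simultaneously yields finiteness of the limit.

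It remains to identify $\lim_{t\to\infty}\mathbb{E}(Y_t^k\prod_q(\tilde X_t^q)^{\ell_q})$ with the stationary moment $\mathbb{E}(Y_\infty^k\prod_q(\tilde X_\infty^q)^{\ell_q})$. Since $(Y_t,\tilde X_t)\stackrel{\mathcal{D}}{\longrightarrow}(Y_\infty,\tilde X_\infty)$ by Theorem \ref{Stationarity theorem}, this follows once the family $\{Y_t^k\prod_q(\tilde X_t^q)^{\ell_q}\}_{t\ge0}$ is uniformly integrable, which I would secure by running the same lexicographic induction at a slightly higher exponent to bound $\sup_{t}\mathbb{E}(\vert Y_t^k\prod_q(\tilde X_t^q)^{\ell_q}\vert^{1+\varepsilon})$; the requisite uniform-in-$t$ bounds on higher time-$t$ moments come again from Proposition \ref{moment result on time t} and the boundedness half of the limit lemma, and are consistent with Remark \ref{Remark moment à l'infini} and the ergodicity of Theorem \ref{ergodicity theorem}. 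A cleaner but essentially equivalent route works directly with the invariant law: if the process is started in stationarity then $\frac{\mathrm{d}}{\mathrm{d}t}\mathbb{E}(Y_t^k\prod_q(\tilde X_t^q)^{\ell_q})=0$, so the drift identity underlying equation \eqref{Ito for mixed moments} collapses to $c_{k,\ell}\,\mathbb{E}(Y_\infty^k\prod_q(\tilde X_\infty^q)^{\ell_q})=(\text{sum of stationary moments})$, which is the claim.

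The main obstacle is precisely this a priori control of the mixed moments: establishing finiteness together with the uniform integrability (equivalently a uniform-in-$t$ higher-moment bound) that upgrades the weak convergence of Theorem \ref{Stationarity theorem} to convergence of the moments, and arranging it compatibly with the lexicographic ordering so that the coupling term of equal total degree is already known to be finite at the moment it is used. Once these integrability inputs are in place, both the termwise passage to the limit and the generator computation are routine.
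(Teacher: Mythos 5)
Your proposal is correct and follows essentially the same route as the paper's proof: pass to the limit in the time-$t$ moment recursion of Proposition \ref{moment result on time t}, upgrade the weak convergence of Theorem \ref{Stationarity theorem} to convergence of moments via uniform integrability, and obtain the announced identity from $\frac{\mathrm{d}}{\mathrm{d}t}\mathbb{E}\bigl(Y_t^k\prod_q(\tilde{X}_t^q)^{\ell_q}\bigr)=0$ under the invariant law. The only presentational difference is that you make explicit, through the lexicographic induction on $\bigl(\sum_j\ell_j,\,k\bigr)$, the triangular structure that the paper encodes by treating all moment functions of total degree at most $M$ as a homogeneous linear ODE system with eigenvalues $-(bk+\sum_i\lambda_i\ell_i)$ and $0$.
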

\begin{remarque}
	Under the same assumptions of Proposition \ref{moment result}, for all $1\leq i,j\leq n$ such that $i\neq j$, we get the following special moments results
	$$\mathbb{E}(Y_\infty)=\dfrac{a}{b},\quad \mathbb{E}(Y^2_\infty)=\dfrac{a(2a+\sigma_1^2)}{2b^2},\quad \mathbb{E}(Y^3_\infty)=\dfrac{a(a+\sigma_1^2)(2a+\sigma_1^2)}{2b^2},$$ 
	$$\mathbb{E}(\tilde{X}^i_\infty)=\dfrac{b\tilde{m}_i -a\tilde{\kappa}_i}{b\lambda_i},\quad\mathbb{E}(Y_\infty\tilde{X}^i_\infty)=\dfrac{a\mathbb{E}(\tilde{X}_\infty^i)+(\tilde{m}_i+\sigma_1\check{\rho}_{i1})\mathbb{E}(Y_\infty)-\tilde{\kappa}_i\mathbb{E}(Y_\infty^2)}{b+\lambda_i},$$
	$$\mathbb{E}(Y_\infty^2\tilde{X}^i_\infty)=\dfrac{(2a+\sigma_1^2)\mathbb{E}(Y_\infty\tilde{X}_\infty^i)+(\tilde{m}_i+2\sigma_1\check{\rho}_{i1})\mathbb{E}(Y_\infty^2)-\tilde{\kappa}_i\mathbb{E}(Y_\infty^3)}{2b+\lambda_i},$$
	$$\mathbb{E}((\tilde{X}_\infty^i)^2)=\dfrac{2\tilde{m}_i\mathbb{E}(\tilde{X}_\infty^i)-2\tilde{\kappa}_i\mathbb{E}(Y_\infty\tilde{X}_\infty^i)+\check{\sigma}^2_{i+1}\mathbb{E}(Y_\infty)}{2\lambda_i},$$
	
	$$\mathbb{E}(Y_\infty(\tilde{X}^i_\infty)^2)=\dfrac{a\mathbb{E}((\tilde{X}_\infty^i)^2)+2(\tilde{m}_i+\sigma_1\check{\rho}_{i1})\mathbb{E}(Y_\infty\tilde{X}_\infty^i)-2\tilde{\kappa}_i\mathbb{E}(Y_\infty^2\tilde{X}_\infty^i)+\check{\sigma}_{i+1}^2\mathbb{E}(Y_\infty)}{b+2\lambda_i},$$
	$$\mathbb{E}(\tilde{X}_\infty^i\tilde{X}^j_\infty)=\dfrac{\tilde{m}_i\mathbb{E}(\tilde{X}_\infty^j)+\tilde{m}_j\mathbb{E}(\tilde{X}_\infty^i)-\tilde{\kappa}_i\mathbb{E}(Y_\infty\tilde{X}_\infty^j)-\tilde{\kappa}_j\mathbb{E}(Y_\infty\tilde{X}_\infty^i)+\check{\rho}_{i,\mathit{H}}\left(\check{\rho}_{j,\mathit{H}}\right)^{\mathsf{T}}\mathbb{E}(Y_\infty)}{\lambda_i+\lambda_j},$$
	and	
	\begin{align*}
		\mathbb{E}(Y_\infty\tilde{X}^i_\infty\tilde{X}^j_\infty)=&\dfrac{a\mathbb{E}(\tilde{X}_\infty^i\tilde{X}_\infty^j)+(\tilde{m}_i+\sigma_1\check{\rho}_{i1})\mathbb{E}(Y_\infty\tilde{X}_\infty^j)+(\tilde{m}_j+\sigma_1\check{\rho}_{j1})\mathbb{E}(Y_\infty\tilde{X}_\infty^i)}{b+\lambda_i+\lambda_j}\\
		&-\dfrac{\tilde{\kappa}_i\mathbb{E}(Y_\infty^2\tilde{X}_\infty^j)+\tilde{\kappa}_j\mathbb{E}(Y_\infty^2\tilde{X}_\infty^i)-\check{\rho}_{i,\mathit{H}}\left(\check{\rho}_{j,\mathit{H}}\right)^{\mathsf{T}}\mathbb{E}(Y_\infty^2)}{b+\lambda_i+\lambda_j}.
	\end{align*}
	\label{Remark moment à l'infini}
\end{remarque}
\begin{proof}(Proposition \ref{moment result})
	At first we suppose that all the mixed moments of $(Y_0,X_0)^{\mathsf{T}}$ are finite and $\mathbb{P}(Y_0\in\R_{++})=1$ due the fact that the distribution of $(Y_\infty,X_\infty)^{\mathsf{T}}$ does not depend on the initial value $(Y_0,X_0)^{\mathsf{T}}$. It is easy to check by analogous arguements used in the particular case of \cite[proof of Theorem 5.1]{Boylog1}, that
	\begin{equation}
\displaystyle\int_0^t\mathbb{E}\left(Y_s^{k}\displaystyle\prod_{i=1}^{n}(X_s^i)^{\ell_i}\right)\mathrm{d}s<\infty,\qquad\text{for all } t\in\R_{+} \text{ and } k,\ell_1,\ldots,\ell_n\in \N \setminus\lbrace0\rbrace.
		\label{mix expectation property}
	\end{equation}
	Next, thanks to relation \eqref{Ito for mixed moments}, we deduce that for all $M\in\N\setminus\lbrace0\rbrace$, the functions $t\mapsto f(k,\ell_1,\ell_2,\ldots,\ell_n,t):=\mathbb{E}\left(Y_{t}^k\prod_{q=1}^n(\tilde{X}_{t}^q)^{\ell_q}\right)$, where the indices $k,\ell_1,\ell_2,\ldots,\ell_n\in\N$ satisfy $k+\sum_{i=1}^n\ell_i\leq M$, are solutions of a homogeneous linear system of differential equations and its coefficient matrix is constant with eigenvalues $-(bk+\sum_{i=1}^n \lambda_i\ell_i)$ and $0$. Hence, we deduce that for all $k,\ell_1,\ell_2,\ldots,\ell_n\in\N$, these functions are linear combinations of $\exp(-(bk+\sum_{i=1}^n \lambda_i\ell_i)t)$ and the constant function and, in particular, they are bounded and convergent, as $t\to\infty$. Next, let us recall that $Y_{t}^k\prod_{q=1}^n(\tilde{X}_{t}^q)$ is uniformly integrable and that it converges in distribution, thanks to Theorem \ref{Stationarity theorem} and the continuous mapping theorem, to $Y_{\infty}^k\prod_{q=1}^n(\tilde{X}_{\infty}^q)^{\ell_q}$, as $t\to\infty$. Consequently, by the moment convergence theorem, see, e.g., \cite[Lemma 2.2.1]{Stroock}, we get $$\lim\limits_{t\to\infty}f(k,\ell_1,\ell_2,\ldots,\ell_n,t)=\mathbb{E}\left(Y_{\infty}^k\displaystyle\prod_{q=1}^n(\tilde{X}_{\infty}^q)^{\ell_q}\right)$$
	and such limit is finite. Finally, since, due to Theorem \ref{Stationarity theorem}, the distribution of $(Y_{\infty},X_{\infty})$ does not depend on $(Y_0,X_0)$, we may and do suppose that they have the same distribution, thus, by Theorem \ref{Stationarity theorem}, the process $(Y_t,X_t)_{t\in\R_{+}}$ is strictly stationary. Hence, for all $t\in\R_{+}$ and $k,\ell_1,\ell_2,\ldots,\ell_n\in\N$, we get $$f(k,\ell_1,\ell_2,\ldots,\ell_n,t):=\mathbb{E}\left(Y_{\infty}^k\prod_{q=1}^n(\tilde{X}_{\infty}^q)^{\ell_q}\right)$$ and in particular, we obtain  $f'(k,\ell_1,\ell_2,\ldots,\ell_n,t)=0$. The equation \eqref{Ito for mixed moments} gives the induction result for $\mathbb{E}\left(Y_{\infty}^k\prod_{q=1}^n(\tilde{X}_{\infty}^q)^{\ell_q}\right)$, $k,\ell_1,\ell_2,\ldots,\ell_n\in\N$ which completes the proof. 
\end{proof}

\vspace*{0.5cm}
\section{About the $\mathit{AD}(1,n)$ model and its limit theorems}

In a first part of this appendix, we recall the clasification result of the $\mathit{AD}(1,n)$ model introduced by Ben Alaya, Dahbi and Fathallah in \cite{Dahbi} and its relative stationaity and ergodicity theorems.
\begin{proposition}
	Let $a\in\R_{++}$, $b\in\R$, $m,\kappa\in\R^n$ and let $\theta$ be a real diagonalizable matrix in $\mathcal{M}_n$. Suppose that $\theta$ is either a positive definite, a negative definite or a zero matrix with eigenvalues different to $b$ or all equal to $b$. Let $Z=(Y,X)^\mathsf{T}$ be the unique strong solution of the SDE $\eqref{model01}$ and suppose that $Z_0$ is integrable. Then, for all $t\in\R_{+},$ we have\\
	\begin{enumerate}
		\item \label{b_positif}	for $b\in\R_{++}$,
		$
		\mathbb{E}\left(Y_{t}\right)=
		\frac{a}{b}+\mathbf{O}(e^{-b t})
		$
		and 
		\begin{equation*}
			\mathbb{E}(X_t)=
			\begin{cases}
				\theta^{-1}m-\dfrac{a}{b}\theta^{-1}\kappa+\mathbf{O}(e^{-(\lambda_{\min}(\theta)\wedge b)t})\mathbf{1}_n,&\lambda_{\min}(\theta)\in\R_{++},\\
				t\left( m-\dfrac{a}{b}\kappa\right) +\mathbf{O}(1)\mathbf{1}_n,&\lambda_{\min}(\theta)=\lambda_{\max}(\theta)=0,\\
				e^{-t \theta}\left(\mathbb{E}(X_0)(\theta-b \mathbf{I}_n)^{-1}\kappa+\Xi_0\right)+\mathbf{O}(1)\mathbf{1}_n,&\lambda_{\max}(\theta)\in\R_{--},		
			\end{cases}	
			\label{expect b>0}~~~~~~~~~~~~~~~~~~~~~~~~~~~~~~~~~~~~~~~~~~~~~~~
		\end{equation*}
		\item \label{b_nul} for $b=0$, 
		$
		\mathbb{E}\left(Y_{t}\right)=
		at+\mathbf{O}(1)
		$
		and 
		\begin{equation*}
			\mathbb{E}(X_t)=
			\begin{cases}
				-ta\theta^{-1}\kappa +\mathbf{O}(1)\mathbf{1}_n,&\lambda_{\min}(\theta)\in\R_{++},\\
				-\dfrac{t^2}{2}a\kappa+\mathbf{O}(t)\mathbf{1}_n,&\lambda_{\min}(\theta)=\lambda_{\max}(\theta)=0,\\
				e^{-t \theta}\left(\mathbb{E}(Y_0)\theta^{-1}\kappa+\mathbb{E}(X_0)-\theta^{-1}m-a\theta^{-2}\kappa\right)+\mathbf{O}(t)\mathbf{1}_n,&\lambda_{\max}(\theta)\in\R_{--},		
			\end{cases}	
			\label{expec b=0}~~~~~~~~~~~~~~~~~~~~~~~~~~~~~~~~
		\end{equation*}
		\item \label{b_negatif} for $b\in\R_{--}$,
		$
		\mathbb{E}\left(Y_{t}\right)=
		\left(\mathbb{E}(Y_0)-\dfrac{a}{b}\right)e^{-bt}+\mathbf{O}(1)
		$
		and
		\begin{equation*}
			\mathbb{E}(X_t)=
			\begin{cases}
				e^{-bt}\left(\left(\frac{a}{b}-\mathbb{E}\left(Y_{0}\right)\right)\left(\theta-b \mathbf{I}_{n}\right)^{-1} \kappa\right)+\mathbf{O}(1)\mathbf{1}_n,&\lambda_{\min}(\theta)\in\R_{++},\\
				e^{-bt}\left( \dfrac{1}{b}\mathbb{E}(Y_0)\kappa+\mathbb{E}(X_0)-\dfrac{a}{b^2}\kappa\right) +\mathbf{O}(t)\mathbf{1}_n,&\lambda_{\min}(\theta)=\lambda_{\max}(\theta)=0,\\
				e^{-bt}\left(-\mathbb{E}(Y_0)(\theta-b\mathbf{I}_n)^{-1}\kappa+\dfrac{a}{b}(\theta-b\mathbf{I}_n)^{-1}\kappa \right)+\mathbf{O}(e^{-\lambda_{\min}(\theta) t})\mathbf{1}_n ,&b<\lambda_{\min}(\theta)\leq\lambda_{\max}<0,\\
				te^{-bt}\left( -\mathbb{E}(Y_0)\kappa+\dfrac{a}{b}\kappa\right) +\mathbf{O}(e^{-bt})\mathbf{1}_n ,&\lambda_{\min}(\theta)=\lambda_{\max}(\theta)=b,\\
				e^{-t \theta}\left( \mathbb{E}(Y_0)(\theta-b\mathbf{I}_n)^{-1}\kappa+\Xi_0\right)+\mathbf{O}(e^{-bt})\mathbf{1}_n ,&\lambda_{\max}(\theta)<b,
			\end{cases}	
		\end{equation*}
		where $\Xi_0=\mathbb{E}(X_0) -\theta^{-1}m+\dfrac{a}{b}\theta^{-1}\kappa-\dfrac{a}{b}(\theta-b\mathbf{I}_n)^{-1}\kappa.$
		
	\end{enumerate}
	\label{classification}
\end{proposition}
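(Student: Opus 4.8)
The plan is to reduce the computation of the first moments to a pair of linear ordinary differential equations obtained by taking expectations, to solve these explicitly, and then to read off the claimed asymptotics by a case analysis on the eigenvalues of $\theta$ relative to $b$ and to $0$. First I would take expectations in the two equations of \eqref{model01}. Since $Z_0$ is integrable and $u\mapsto\mathbb{E}(Y_u)$ solves the scalar equation below (hence is locally bounded), one has $\int_0^t\mathbb{E}(Y_s)\,\mathrm{d}s<\infty$, so the Brownian integrals $\int_0^t\rho_{11}\sqrt{Y_s}\,\mathrm{d}B_s^1$ and $\int_0^t\sqrt{Y_s}\tilde{\rho}\,\mathrm{d}B_s$ are genuine martingales with zero mean. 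Using Fubini's theorem this yields
\begin{equation*}
\frac{\mathrm{d}}{\mathrm{d}t}\mathbb{E}(Y_t)=a-b\,\mathbb{E}(Y_t),\qquad \frac{\mathrm{d}}{\mathrm{d}t}\mathbb{E}(X_t)=m-\kappa\,\mathbb{E}(Y_t)-\theta\,\mathbb{E}(X_t).
\end{equation*}

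The scalar equation integrates to $\mathbb{E}(Y_t)=e^{-bt}\mathbb{E}(Y_0)+a\int_0^t e^{-b(t-u)}\,\mathrm{d}u$, from which the three expansions of $\mathbb{E}(Y_t)$ for $b\in\R_{++}$, $b=0$ and $b\in\R_{--}$ follow at once by evaluating the elementary integral. For the vector equation I would use the integrating factor $e^{\theta t}$ to obtain
\begin{equation*}
\mathbb{E}(X_t)=e^{-\theta t}\mathbb{E}(X_0)+\int_0^t e^{-\theta(t-u)}m\,\mathrm{d}u-\int_0^t e^{-\theta(t-u)}\kappa\,\mathbb{E}(Y_u)\,\mathrm{d}u,
\end{equation*}
and then substitute the explicit form of $\mathbb{E}(Y_u)$ just found.

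To evaluate the matrix integrals I would diagonalize $\theta=PDP^{-1}$ with $D=\mathrm{Diag}(\lambda_1,\dots,\lambda_n)$, so that $e^{-\theta(t-u)}=Pe^{-D(t-u)}P^{-1}$ and every integral reduces to scalar ones of the type $\int_0^t e^{-\lambda_i(t-u)}\,\mathrm{d}u$ together with the fundamental resonance integral
\begin{equation*}
\int_0^t e^{-\lambda_i(t-u)}e^{-bu}\,\mathrm{d}u=\begin{cases}\dfrac{e^{-bt}-e^{-\lambda_i t}}{\lambda_i-b},&\lambda_i\neq b,\\[6pt] t\,e^{-bt},&\lambda_i=b.\end{cases}
\end{equation*}
The resonant case $\lambda_i=b$ is precisely what produces the polynomial prefactor $t$ in the lines $\lambda_{\min}(\theta)=\lambda_{\max}(\theta)=b$; likewise the linear growth of $\mathbb{E}(Y_u)$ combined with $\theta=\mathbf{0}_{n,n}$ produces the $t^2$ term when $b=0$.

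Finally I would assemble the asymptotics case by case. For each sign of $b$ and each admissible configuration of $\lambda_{\min}(\theta),\lambda_{\max}(\theta)$ relative to $0$ and to $b$, I compare the rates $e^{-bt}$, $e^{-\lambda_i t}$ and the constants, retain the dominant contribution, and collect the subdominant terms into the error $\mathbf{O}(\cdot)\mathbf{1}_n$; the surviving constant vectors, such as $\theta^{-1}m-\tfrac{a}{b}\theta^{-1}\kappa$ in the subcritical case and the vector $\Xi_0$ in the supercritical cases, emerge from grouping the non-resonant limits. The main obstacle is organizational rather than conceptual: keeping track, across all listed cases, of which exponential dominates and of the algebraic identities (partial-fraction rearrangements of the factors $(\lambda_i-b)^{-1}$ and the uses of $(\theta-b\mathbf{I}_n)^{-1}$) needed to recognize the limiting constants in the stated closed form, with the resonant cases $\lambda_i=b$ and $\lambda_i=0$ each requiring separate treatment.
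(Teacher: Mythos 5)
Your approach is correct and is the natural one: the paper itself does not prove Proposition \ref{classification} but recalls it from the earlier work \cite{Dahbi}, and the route you describe --- taking expectations to obtain the linear first-moment ODEs, solving by variation of constants, diagonalizing $\theta$, and handling the resonance integral $\int_0^t e^{-\lambda_i(t-u)}e^{-bu}\,\mathrm{d}u$ separately when $\lambda_i=b$ --- does reproduce all the stated leading terms and constants (I checked, for instance, that collecting the $e^{-\theta t}$ contributions in the case $\lambda_{\max}(\theta)<b<0$ yields exactly $\mathbb{E}(Y_0)(\theta-b\mathbf{I}_n)^{-1}\kappa+\Xi_0$). The only point to tighten is the justification that the stochastic integrals have zero mean: as written it is circular (you use the ODE to bound $\mathbb{E}(Y_u)$, but you need that bound to derive the ODE); the standard fix is to localize with $\tau_N=\inf\{t:Y_t\geq N\}$, apply Gronwall's lemma to the stopped expectations to get a bound uniform in $N$, and pass to the limit by monotone convergence before writing down the ODEs.
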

\begin{definition}
	Let $(Z_t)_{t\in\R_{+}}$ be the unique strong solution of $\eqref{model0 Z}$ satisfying $\mathbb{P}(Z_0^1\in\R_{++})=1$. Suppose that $\theta$ is either a positive definite, a negative definite or a zero matrix with eigenvalues different to $b$ or all equal to $b$. We call $(Z_t)_{t\in\R_+}$ subcritical if $b\wedge \lambda_{\min}(\theta)\in\R_{++}$, i.e., when $\mathbb{E}(Z_t)$ converges as $t\to\infty$, critical if either $b\in\R_+$ and $\lambda_{\min}(\theta)=\lambda_{\max}(\theta)=0$ or $b=0$ and $\lambda_{\min}(\theta)\in\R_{++}$, i.e., when $\mathbb{E}(Z_t)$ has a polynomial expansion and supercritical if $b\wedge\lambda_{\max}(\theta)\in\R_{--}$, i.e., when $\mathbb{E}(Z_t)$ has an exponential expansion.
\end{definition}
\begin{theoreme}
	Let us consider the $\mathit{AD}(1,n)$ model $\eqref{model01}$ with $a\in\R_+,\ b\in\R_{++},\ m\in\R^n,\ \kappa\in\R^n$, $\theta\in\mathcal{M}_n$ a diagonalizable positive definite matrix and $Z_0=(Y_0,X_0)^{\mathsf{T}}$ is a random initial value independent of $(B_t)_{t\in\R_+}$ satisfying $\mathbb{P}(Y_0\in\R_+)=1$.
	\begin{enumerate}[label=\arabic*)]
		\item Then, $Z_t\stackrel{\mathcal{D}}{\longrightarrow}Z_{\infty}$ as $t\to\infty$, where $Z_{\infty}=(Y_\infty,X_\infty)^{\mathsf{T}}$ and the distribution of $Z_\infty$ is given by
		\begin{equation}
			\mathbb{E}\left(e^{\nu^{\mathsf{T}} Z_\infty}\right)=\mathbb{E}\left(e^{-\lambda Y_\infty +i \mu^{\mathsf{T}} X_\infty}\right)=\exp\left(a\displaystyle\int_0^{\infty} \mathcal{K}_s\left(-\lambda,\mu \right)\mathrm{~d}s+i\mu^{\mathsf{T}}\theta^{-1} m\right),
			\label{loi limite Stationarity theorem}
		\end{equation}
		for $\nu=(-\lambda,i\mu)\in\mathcal{U}_1\times \mathcal{U}_2$, where $\mathcal{U}_1:=\lbrace u\in \mathbb{C} : \texttt{Re}(u)\in\R_{-}\rbrace$ and $\mathcal{U}_2:=\lbrace u\in \mathbb{C}^n : \texttt{Re}(u)=0\rbrace$ and for all $(u_1,u_2)\in \mathcal{U}_1\times\R^n$, the function $t\mapsto \mathcal{K}_t(u_1,u_2)$ is the unique solution of the following (deterministic) non-linear Riccati partial differential equation (PDE)
		\begin{equation}
			\begin{cases}
				\dfrac{\partial \mathcal{K}_t}{\partial t}(u_1,u_2)&=
				\dfrac{\rho_{11}^2}{2}\mathcal{K}_t^2(u_1,u_2)-\left(b-i\rho_{11}\rho_{J1}^{\mathsf{T}}e^{-t\theta^{\mathsf{T}}}u_2\right) \mathcal{K}_t(u_1,u_2) -i\kappa^{\mathsf{T}}\,e^{-t\theta^{\mathsf{T}}}u_2 \\&\quad-\frac{1}{2}\left(\text{vec}(\rho_{JJ}\rho_{JJ}^{\mathsf{T}})\right)^\mathsf{T}\,e^{-t (\theta^\mathsf{T}\oplus\theta^\mathsf{T})}(u_2\otimes u_2)-\dfrac{1}{2}\left(\rho_{J1}^{\mathsf{T}}e^{-t\theta^{\mathsf{T}}}u_2\right)^2\\ 
				\mathcal{K}_0(u_1,u_2)&=u_1.
			\end{cases}
			\label{EDP_loi limite Stationarity theorem}
		\end{equation} 
		\label{i Stationarity}
		\item In addition, if $Z_0$ has the same distribution as $Z_{\infty}$ given by $\eqref{loi limite Stationarity theorem}$, then $(Z_t)_{t\in\R_+}$ is strictly stationary.
	\end{enumerate}
	\label{Stationarity theorem}
\end{theoreme}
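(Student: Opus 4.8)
The plan is to exploit the affine structure of the $\mathit{AD}(1,n)$ model. Since \eqref{model01} is an affine diffusion on $\mathcal{D}=\R_{+}\times\R^n$ in the sense of \cite{Duffie,Filipovic}, the conditional Fourier--Laplace transform is exponential-affine: for $\nu=(-\lambda,i\mu)\in\mathcal{U}_1\times\mathcal{U}_2$ and $Z_0=(y,x)^{\mathsf T}$,
\[
\mathbb{E}\left(e^{\nu^{\mathsf T}Z_t}\mid Z_0\right)=\exp\left(\phi(t,\nu)+\psi_1(t,\nu)\,y+\psi_J(t,\nu)^{\mathsf T}x\right),
\]
where the triple $(\phi,\psi_1,\psi_J)$ solves a system of generalized Riccati equations, with $\psi:=(\psi_1,\psi_J)^{\mathsf T}$. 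I would derive this system by inserting the ansatz into the Kolmogorov backward equation $\partial_t u=\mathcal{A}u$ (equivalently, by requiring $\exp(\phi(t-s,\nu)+\psi(t-s,\nu)^{\mathsf T}Z_s)$ to be a martingale and applying It\^o's formula to the dynamics \eqref{model0 Z}). Collecting the terms constant in, linear in $y$, and linear in $x$, and using the lower-triangular block form of $\rho$, gives $\dot{\phi}=a\psi_1+m^{\mathsf T}\psi_J$, $\dot{\psi}_J=-\theta^{\mathsf T}\psi_J$, and a scalar Riccati equation for $\psi_1$ with quadratic coefficient $\rho_{11}^2/2$, subject to $\phi(0)=0$, $\psi_1(0)=-\lambda$, $\psi_J(0)=i\mu$.

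Second, I would solve the subsystem. The $X$-block is linear and autonomous, so $\psi_J(t,\nu)=e^{-t\theta^{\mathsf T}}i\mu$; substituting this into the equation for $\psi_1$ reproduces exactly the Riccati equation \eqref{EDP_loi limite Stationarity theorem}, so that $\psi_1(t,\nu)=\mathcal{K}_t(-\lambda,\mu)$ after identifying $u_2=\mu$ and using $e^{-t(\theta^{\mathsf T}\oplus\theta^{\mathsf T})}=e^{-t\theta^{\mathsf T}}\otimes e^{-t\theta^{\mathsf T}}$ from \eqref{matrix property}. Integrating the $\phi$-equation then yields $\phi(t,\nu)=a\int_0^t\mathcal{K}_s(-\lambda,\mu)\,\mathrm{d}s+\int_0^t m^{\mathsf T}e^{-s\theta^{\mathsf T}}i\mu\,\mathrm{d}s$.

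Third --- the technical heart --- I would carry out the asymptotics as $t\to\infty$. Since $\theta$ is positive definite, $e^{-t\theta^{\mathsf T}}\to0$ and is exponentially integrable, so $\psi_J(t,\nu)\to0$ and $\int_0^t m^{\mathsf T}e^{-s\theta^{\mathsf T}}i\mu\,\mathrm{d}s\to i\mu^{\mathsf T}\theta^{-1}m$. For the CIR-type coefficient I must show that $\mathcal{K}_t$ exists globally on $[0,\infty)$, remains in $\mathcal{U}_1$ (nonpositive real part, which tames the quadratic term), decays to $0$, and is integrable, $\int_0^{\infty}\lvert\mathcal{K}_s\rvert\,\mathrm{d}s<\infty$; here $b\in\R_{++}$ provides the linear stabilizing rate while the forcing terms inherit the exponential decay of $\psi_J$. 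These estimates give $\phi(t,\nu)\to a\int_0^{\infty}\mathcal{K}_s(-\lambda,\mu)\,\mathrm{d}s+i\mu^{\mathsf T}\theta^{-1}m=:\Phi(\nu)$. Taking expectations in the conditional formula and passing to the limit by dominated convergence (justified since $\Re(\psi_1)\le0$ keeps $\lvert e^{\psi_1 Y_0}\rvert\le1$ and $\psi_J$ is purely imaginary), I obtain $\mathbb{E}(e^{\nu^{\mathsf T}Z_t})\to e^{\Phi(\nu)}$, independently of the law of $Z_0$. Continuity of $\nu\mapsto e^{\Phi(\nu)}$ at the origin together with L\'evy's continuity theorem then identifies the limit as the transform of a genuine random vector $Z_\infty$, proving part~1 and formula \eqref{loi limite Stationarity theorem}. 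I expect this Riccati analysis --- global existence and the integrable-decay estimate under $b>0$ and $\theta$ positive definite --- to be the main obstacle, since the quadratic term can force blow-up outside the admissible domain.

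Finally, for strict stationarity I would use the semiflow (cocycle) property of the affine transforms, $\psi(t+s,\nu)=\psi(s,\psi(t,\nu))$ and $\phi(t+s,\nu)=\phi(t,\nu)+\phi(s,\psi(t,\nu))$, which follows from the Markov property and uniqueness of the Riccati solutions. Letting $s\to\infty$ gives $\Phi(\nu)=\phi(t,\nu)+\Phi(\psi(t,\nu))$. Hence, if $Z_0\stackrel{\mathcal D}{=}Z_\infty$, then $\mathbb{E}(e^{\nu^{\mathsf T}Z_t})=e^{\phi(t,\nu)}\,\mathbb{E}(e^{\psi(t,\nu)^{\mathsf T}Z_\infty})=e^{\phi(t,\nu)+\Phi(\psi(t,\nu))}=e^{\Phi(\nu)}$, so the law of $Z_\infty$ is invariant for the transition semigroup. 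Time-homogeneity of the Markov process $(Z_t)_{t\in\R_+}$ then upgrades invariance of the one-dimensional marginal to invariance of all finite-dimensional distributions under time shifts, which is precisely strict stationarity.
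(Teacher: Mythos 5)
First, a point of reference: the paper does not prove this theorem at all --- it is recalled in Appendix B from the earlier work \cite{Dahbi} (see also \cite{Barczy stationarity ergodicty} for the two-factor case), so there is no in-paper proof to compare against. Your route --- exponential-affine conditional Fourier--Laplace transform, the generalized Riccati system for $(\phi,\psi_1,\psi_J)$, the explicit solution $\psi_J(t,\nu)=ie^{-t\theta^{\mathsf T}}\mu$, reduction of the $\psi_1$-equation to \eqref{EDP_loi limite Stationarity theorem}, passage to the limit via L\'evy's continuity theorem, and the semiflow identity $\Phi(\nu)=\phi(t,\nu)+\Phi(\psi(t,\nu))$ for stationarity --- is exactly the standard argument for this class of results and is surely the one behind the cited statement; your bookkeeping of the blocks of $\rho$ and of the limit $\int_0^t m^{\mathsf T}e^{-s\theta^{\mathsf T}}i\mu\,\mathrm{d}s\to i\mu^{\mathsf T}\theta^{-1}m$ is correct.

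The genuine gap is the step you yourself flag as the ``technical heart'' and then do not carry out: global existence of $\mathcal{K}_t(-\lambda,\mu)$, the invariance $\Re(\mathcal{K}_t)\le 0$, and the integrable decay $\int_0^{\infty}\vert\mathcal{K}_s\vert\,\mathrm{d}s<\infty$. Everything in the theorem hinges on these three facts: the limit formula is meaningless without convergence of the integral, your dominated-convergence step needs $\Re(\mathcal{K}_t)\le 0$, and the continuity of the limit transform at the origin (or, alternatively, tightness of $(Z_t)_{t\ge0}$) again requires the decay to be locally uniform in $(\lambda,\mu)$. None of this is automatic for a complex Riccati equation with time-dependent complex coefficients. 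For instance, $\Re(\mathcal{K}_t)\le 0$ follows only after observing that the cross term $-\rho_{11}\bigl(\rho_{J1}^{\mathsf T}e^{-t\theta^{\mathsf T}}\mu\bigr)\Im(\mathcal{K}_t)$ combines with $-\tfrac{\rho_{11}^2}{2}(\Im \mathcal{K}_t)^2-\tfrac12\bigl(\rho_{J1}^{\mathsf T}e^{-t\theta^{\mathsf T}}\mu\bigr)^2$ into the complete square $-\tfrac12\bigl(\rho_{11}\Im\mathcal{K}_t+\rho_{J1}^{\mathsf T}e^{-t\theta^{\mathsf T}}\mu\bigr)^2\le0$, so that $\tfrac{\mathrm d}{\mathrm dt}\Re\mathcal{K}_t\le \tfrac{\rho_{11}^2}{2}(\Re\mathcal{K}_t)^2-b\,\Re\mathcal{K}_t$ and the half-plane $\{\Re\le0\}$ is forward invariant; the exponential decay and integrability then need a two-sided Gronwall argument coupling $\Re\mathcal{K}_t$ and $\Im\mathcal{K}_t$ and exploiting $b>0$ together with the exponential decay of the forcing terms. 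As written, your proposal is a correct and well-organized \emph{plan}, but the only nontrivial analysis in the theorem is asserted rather than proved.
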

\begin{theoreme} Let us consider the $\mathit{AD}(1,n)$ model $\eqref{model01}$ with $a\in\R_{++}$, $b\in\R_{++}$, $m\in\R^n$, $\kappa\in\R^n$ and $\theta\in\mathcal{M}_n$ a diagonalizable positive definite matrix with initial random values $Z_0=(Y_0,X_0)^{\mathsf{T}}$ independent of $(B_t)_{t\in\R_{+}}$ satisfying $\mathbb{P}(Y_0\in\R_{++})=1$. Then the process $Z$ is exponentially ergodic, namely, there exists $\delta\in\R_{++},\ B\in\R_{++}$ and $r\in\R_{++}$ such that
	\begin{equation}
		\underset{\vert g\vert\leq V+1}{\sup}\left\vert \mathbb{E}\left(g(Z_t)\vert Z_0=z_0\right)-\mathbb{E}(g(Z_{\infty}))\right\vert\leq B(V(z_0)+1))e^{-\delta t},
		\label{ergodicity1}
	\end{equation}
	for all $t\in\R_{+}$ and $z_0=(y_0,x_0)^{\mathsf{T}}\in\mathcal{D}$, where the supremum is running for Borel measurable functions $g:\mathcal{D}\to\R$, $V(y,x):=y^2+r\norm{x}_2^2$, for all $(y,x)\in\mathcal{D}$,
	and $Z_\infty=(Y_\infty,X_\infty)^{\mathsf{T}}$ is defined by $\eqref{loi limite Stationarity theorem}$. Moreover, for all Borel measurable functions $f:\R\times\R^n \to \R$ such that $\mathbb{E}\left(\vert f(Z_\infty) \vert\right)<\infty$, we have
	\begin{equation}
		\mathbb{P}\left( \underset{T\to\infty}{\lim}\dfrac{1}{T}\displaystyle\int_0^Tf(Z_s)\mathrm{~d}s=\mathbb{E}\left(f(Z_\infty)\right)\right)=1.
		\label{ergodic}
	\end{equation}
	\label{ergodicity theorem}
\end{theoreme}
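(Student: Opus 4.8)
The plan is to establish \eqref{ergodicity1} through the Foster--Lyapunov theory of exponential ergodicity for continuous-time Markov processes, in the form given by Down, Meyn and Tweedie. Recall that $Z=(Y,X)^{\mathsf{T}}$ is a time-homogeneous strong Markov process on $\mathcal{D}=\R_+\times\R^n$ whose extended generator $\mathcal{A}$ acts on $f\in\mathcal{C}^2(\mathcal{D},\R)$ by
\[
\mathcal{A}f(y,x)=(a-by)\partial_y f+(m-\kappa y-\theta x)^{\mathsf{T}}\nabla_x f+\tfrac{y}{2}\,\mathrm{tr}\!\left(\rho\rho^{\mathsf{T}}H_{(y,x)}(f)\right).
\]
The criterion I would invoke asserts that if there is a norm-like function $V\geq 1$, constants $c,d\in\R_{++}$ and a petite set $C$ with $\mathcal{A}V\leq -cV+d\mathbf{1}_C$, and if $Z$ is irreducible and aperiodic with every compact set petite, then $Z$ is $V$-uniformly ergodic, which is precisely the statement \eqref{ergodicity1} with $\delta$ the geometric rate produced by the theorem and $Z_\infty$ its unique invariant law.

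First I would verify the drift inequality for the candidate $V(y,x)=y^2+r\norm{x}_2^2$. A direct computation gives
\[
\mathcal{A}V(y,x)=2ay-2by^2+2r\,m^{\mathsf{T}}x-2r\,y\,\kappa^{\mathsf{T}}x-2r\,x^{\mathsf{T}}\theta x+y\Big(\sigma_1^2+r\textstyle\sum_{i=2}^{d}\sigma_i^2\Big).
\]
Since $b\in\R_{++}$ the term $-2by^2$ is strictly dissipative in $y$, and since $\theta$ is positive definite one has $x^{\mathsf{T}}\theta x\geq\mu\norm{x}_2^2$ for some $\mu\in\R_{++}$, so $-2r\,x^{\mathsf{T}}\theta x$ is strictly dissipative in $x$. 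The only destabilising contribution is the coupling $-2r\,y\,\kappa^{\mathsf{T}}x$, which I would absorb by Young's inequality, $|2ry\,\kappa^{\mathsf{T}}x|\leq r\varepsilon\,y^2+r\varepsilon^{-1}\norm{\kappa}_2^2\norm{x}_2^2$, and by taking $r$ and $\varepsilon$ small enough that both quadratic coefficients stay negative. The remaining linear-in-$(y,x)$ terms are then dominated by the negative quadratic part up to an additive constant, yielding $\mathcal{A}V\leq -cV+d$ on all of $\mathcal{D}$ for suitable $c,d\in\R_{++}$; in particular the inequality holds off any large sublevel set $C=\{V\leq L\}$.

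Next I would settle the topological hypotheses. On the interior $\R_{++}\times\R^n$ the diffusion matrix $y\rho\rho^{\mathsf{T}}$ is positive definite, because $\rho$ is invertible (being triangular with positive diagonal); hence $Z$ is a non-degenerate diffusion, uniformly elliptic on compact subsets of the interior, whose transition kernel admits a smooth, strictly positive density, as already recorded through the absolute continuity statement in Theorem \ref{Stationarity theorem} and relation (31) in \cite{Dahbi}. This delivers $\psi$-irreducibility with respect to Lebesgue measure, aperiodicity, and the petiteness of every compact subset of $\mathcal{D}$. The only delicate point is the accessibility of, and the behaviour near, the boundary $\{y=0\}$, which I would handle using the law of the CIR component $Y$ (a time-changed squared Bessel process with explicit noncentral chi-square transition law, everywhere positive on $\R_+$ for $a\in\R_{++}$). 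Combining the drift inequality with these conditions in the Down--Meyn--Tweedie theorem yields \eqref{ergodicity1}, with $Z_\infty$ necessarily the invariant law of \eqref{loi limite Stationarity theorem} furnished by Theorem \ref{Stationarity theorem}.

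Finally, the strong law \eqref{ergodic} follows from positive Harris recurrence: the same drift condition forces $Z$ to be positive Harris with unique invariant probability measure equal to the law of $Z_\infty$, so the ratio ergodic theorem for Harris processes gives, for every $f$ with $\mathbb{E}(|f(Z_\infty)|)<\infty$, the almost sure convergence of $\frac{1}{T}\int_0^T f(Z_s)\mathrm{~d}s$ to $\mathbb{E}(f(Z_\infty))$. I expect the main obstacle to be the verification of the irreducibility and petite-set package \emph{up to} the degenerate boundary $\{y=0\}$, where ellipticity is lost: the drift computation is routine once $r$ is tuned, but making the minorisation argument rigorous near the boundary, rather than merely on the open interior, is the technically demanding step.
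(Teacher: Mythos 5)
This theorem is not proved in the present paper at all: it is recalled verbatim from the authors' earlier work \cite{Dahbi} (see also \cite{Barczy stationarity ergodicty,Boylog1,Jin} for the $n=1$ ancestors), so there is no in-paper proof to compare against. That said, your Foster--Lyapunov route via the Down--Meyn--Tweedie $V$-uniform ergodicity theorem is the standard -- and almost certainly the intended -- argument: the fact that the statement itself exhibits the Lyapunov function $V(y,x)=y^2+r\norm{x}_2^2$ with a tunable $r\in\R_{++}$ is a strong fingerprint of exactly the drift computation you carry out, and your identification of the genuinely delicate step (irreducibility, aperiodicity and petiteness of compacts despite the degeneracy of the diffusion matrix $y\rho\rho^{\mathsf{T}}$ at $\{y=0\}$, handled through the explicit noncentral chi-square law of the CIR component and the absolute continuity of the transition kernel) matches where the published proofs of this type of result spend their effort. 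Your generator computation for $V$ is correct, including the It\^o correction $y(\sigma_1^2+r\sum_{i=2}^d\sigma_i^2)$.

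One small inaccuracy in the tuning: with the Young bound $\abs{2ry\,\kappa^{\mathsf{T}}x}\leq r\varepsilon y^2+r\varepsilon^{-1}\norm{\kappa}_2^2\norm{x}_2^2$, the coefficient of $\norm{x}_2^2$ becomes $r(-2\mu+\varepsilon^{-1}\norm{\kappa}_2^2)$, which is negative only if $\varepsilon>\norm{\kappa}_2^2/(2\mu)$; so $\varepsilon$ must be chosen \emph{large enough} (or fixed at such a value), and only then $r<2b/\varepsilon$ small enough to keep the $y^2$ coefficient negative. Your phrase ``taking $r$ and $\varepsilon$ small enough'' gets the direction of $\varepsilon$ wrong, but the two-parameter tuning does close, so this is a slip of wording rather than a gap. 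The remaining claims (positive Harris recurrence from the same drift condition, hence the pathwise ergodic theorem giving \eqref{ergodic} for every $f$ with $\mathbb{E}(\vert f(Z_\infty)\vert)<\infty$) are the standard consequences and are correctly invoked.
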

In what follows we recall some limit theorems for continuous local martingales. We use these limit theorems for studying the asymptotic behavior of the MLE for $\t.$ First we recall a strong law of large numbers for continuous local martingales see Liptser and Shiryaev 

\begin{theoreme}(Liptser and Shiryaev (2001)) Let $(\Omega,\mathcal{F},(\mathcal{F})_{t\in \real_{+}},\mathbb{P})$ be a filtered probability space satisfying the usual conditions. Let  $(M_{t})_{t \in \real_{+}}$ be a square-integrable continuous local martingale with respect to the filtration $(\mathcal{F})_{t\in \real_{+}}$ such that $\mathbb{P} \left( M_0 = 0\right)=1.$ Let $(\xi_t)_{t\in \real_{+}}$ be a progressively measurable process such that
	$$\mathbb{P}\left( \displaystyle\int_{0}^{t}  \xi_{u}^{2}\mathrm{~d} \left\langle M\right\rangle_u < \infty\right)=1, \quad t \in \real_{+},$$
	and
	\begin{align*}
		\displaystyle\int_{0}^{t}  \xi_{u}^{2} \mathrm{~d} \left\langle M\right\rangle_u \stackrel{a.s.}{\longrightarrow} \infty,\quad as\;\; t\to \infty,
	\end{align*}
	where $\left( \left\langle M\right\rangle_t  \right)_{t \in \real_{+}} $ denotes the quadratic variation process of $M.$ Then
	\begin{align*}
		\dfrac{\displaystyle\int_{0}^{t}  \xi_{u} \mathrm{~d} M_u}{\displaystyle\int_{0}^{t}  \xi_{u}^{2} \mathrm{~d} \left\langle M\right\rangle_u}
		\stackrel{a.s.}{\longrightarrow} 0,\quad as\;\; t\to\infty.
	\end{align*}
	If $(M_{t})_{t \in \real_{+}}$ is a standard Wiener process, the progressive measurability of  $(\xi_t)_{t\in \real_{+}}$ can be relaxed to measurability and adaptedness to the filtration $(\mathcal{F})_{t\in \real_{+}}.$
	\label{LFGN}
\end{theoreme}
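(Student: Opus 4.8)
The plan is to reduce the statement to two classical ingredients: the almost sure convergence of an $L^2$-bounded continuous martingale and a continuous version of Kronecker's lemma. Set
\begin{equation*}
N_t:=\int_0^t \xi_u\,\mathrm{d}M_u,\qquad A_t:=\int_0^t \xi_u^2\,\mathrm{d}\langle M\rangle_u=\langle N\rangle_t .
\end{equation*}
By the integrability hypothesis $N$ is a well-defined continuous local martingale with $N_0=0$, and the assumptions become $A_t<\infty$ a.s. for each $t$ and $A_t\stackrel{a.s.}{\longrightarrow}\infty$. The goal is exactly $N_t/A_t\stackrel{a.s.}{\longrightarrow}0$, so it suffices to control $N$ against its own increasing quadratic variation.

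First I would introduce the increasing process $a_t:=1+A_t$ and the auxiliary local martingale $L_t:=\int_0^t a_u^{-1}\,\mathrm{d}N_u=\int_0^t \frac{\xi_u}{1+A_u}\,\mathrm{d}M_u$. A direct computation of its quadratic variation gives
\begin{equation*}
\langle L\rangle_t=\int_0^t \frac{1}{a_u^{2}}\,\mathrm{d}A_u=\int_0^t \frac{\mathrm{d}A_u}{(1+A_u)^2}=1-\frac{1}{1+A_t}\le 1,
\end{equation*}
so $\mathbb{E}(\langle L\rangle_\infty)\le 1<\infty$. Hence $L$ is a genuine $L^2$-bounded (square-integrable) martingale, and by the martingale convergence theorem it converges almost surely to a finite limit $L_\infty$ as $t\to\infty$.

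Second, I would recover $N$ from $L$ by integration by parts. Since $\mathrm{d}N_u=a_u\,\mathrm{d}L_u$ and $a$ is continuous of finite variation (so the cross bracket $[a,L]$ vanishes), the Stieltjes product rule yields
\begin{equation*}
N_t=\int_0^t a_u\,\mathrm{d}L_u=a_tL_t-a_0L_0-\int_0^t L_u\,\mathrm{d}a_u,
\end{equation*}
whence, using $a_0=1$, $L_0=0$,
\begin{equation*}
\frac{N_t}{a_t}=L_t-\frac{1}{a_t}\int_0^t L_u\,\mathrm{d}a_u .
\end{equation*}
On the almost sure event where $L_u\to L_\infty$ and $a_t\to\infty$, the continuous Toeplitz/Kronecker averaging argument (split the integral at a large $T$ beyond which $|L_u-L_\infty|<\varepsilon$, bound the head by $\mathrm{const}/a_t\to0$ and the tail by $\varepsilon$) gives $a_t^{-1}\int_0^t L_u\,\mathrm{d}a_u\to L_\infty$. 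Therefore $N_t/a_t\to L_\infty-L_\infty=0$ a.s., and since $a_t/A_t=(1+A_t)/A_t\to 1$, we obtain $N_t/A_t\stackrel{a.s.}{\longrightarrow}0$, which is the assertion.

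The only delicate points are that both ingredients must be run pathwise on the full-measure set $\{A_t\to\infty\}$, and that one verifies $L$ is a true martingale (guaranteed here by $\langle L\rangle_\infty\le1$, which makes localization unnecessary). As an alternative that avoids Kronecker's lemma, one may invoke the Dambis--Dubins--Schwarz theorem to write $N_t=W_{A_t}$ for a Brownian motion $W$ on a possibly enlarged space and combine the strong law $W_s/s\to0$ with $A_t\to\infty$; I expect this representation step to be the main conceptual hurdle in that route, since it requires $\langle N\rangle_\infty=\infty$, which is precisely the standing hypothesis. Finally, the concluding clause is immediate: when $M$ is a standard Wiener process, $\int_0^t\xi_u\,\mathrm{d}M_u$ is already well defined for merely measurable and adapted $\xi$, so the whole argument goes through verbatim under that weaker regularity.
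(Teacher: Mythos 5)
Your proof is correct. Note that the paper does not actually prove this statement: it is recalled in the appendix as a known result and attributed to Liptser and Shiryaev (2001), so there is no in-paper argument to compare against. What you supply is the standard textbook proof of the strong law for continuous local martingales, and every step checks out: with $N_t=\int_0^t\xi_u\,\mathrm{d}M_u$, $A_t=\langle N\rangle_t$ and $a_t=1+A_t$, the auxiliary integral $L_t=\int_0^t a_u^{-1}\,\mathrm{d}N_u$ has $\langle L\rangle_\infty\le 1$, hence is a true $L^2$-bounded martingale converging a.s.; the integration by parts $N_t=a_tL_t-\int_0^tL_u\,\mathrm{d}a_u$ is legitimate because $a$ is continuous and of finite variation; and the pathwise Toeplitz argument on the full-measure event $\{A_t\to\infty\}$ yields $N_t/a_t\to 0$, with $a_t/A_t\to1$ finishing the claim. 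The alternative Dambis--Dubins--Schwarz route you sketch is also valid under the hypothesis $\langle N\rangle_\infty=\infty$. The only points deserving explicit care are exactly the ones you flag: running both limits pathwise on the same almost sure set, and the harmless discrepancy between normalizing by $a_t$ versus $A_t$. The closing remark on the Wiener case is indeed just the standard fact that adapted measurable integrands admit progressively measurable modifications for Brownian integrals, so nothing further is needed there.
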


The next theorem is about the asymptotic behaviour of continuous multivariate local martingales, see \cite[Theorem 4.1]{Zanten}.
\begin{theoreme}
	For $p\in\N \setminus \lbrace 0\rbrace$, let $M=(M_t)_{t\in\R_+}$ be a $p$-dimensional square-integrable continuous
	local martingale with respect to the filtration $(\mathcal{F}_t)_{t\in\R_+}$ such that $\mathbb{P}(M_0=0)=1$. Suppose that
	there exists a function $Q:\left[t_0,\infty\right) \to \mathcal{M}_p$ with some $t_0\in\R_+$ such that $Q(t)$ is an invertible
	(non-random) matrix for all $t\in\R_+$, $\underset{t\to\infty}{\lim} \norm{Q(t)}=0$ and
	$$	Q(t) \langle M\rangle_t Q(t)^{\mathsf{T}}\stackrel{\mathbb{P}}{\longrightarrow}\eta\eta^{\mathsf{T}},\quad \text{as }t\to \infty,$$
	where $\eta$ is a random matrix in $\mathcal{M}_p$. Then, for each random matrix $A\in\mathcal{M}_{k,l}$,  $k,l\in\N\setminus\lbrace 0\rbrace$, defined on $(\Omega,\mathcal{F},\mathcal{P})$, we have
	$$
	(Q(t)M_t,A) \stackrel{\mathcal{D}}{\longrightarrow}
	(\eta Z,A),\quad \text{as }t\to \infty,$$
	where $Z$ is a $p$-dimensional standard normally distributed random vector independent of $(\eta,A)$.
	\label{CLT Van Zanten}
\end{theoreme}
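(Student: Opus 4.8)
The plan is to read the statement as an $\mathcal{F}$-stable central limit theorem and to reduce it to a single characteristic-function identity. Observe first that asking for the joint convergence $(Q(t)M_t,A)\stackrel{\mathcal{D}}{\longrightarrow}(\eta Z,A)$ for \emph{every} random matrix $A$ on $(\Omega,\mathcal{F},\mathbb{P})$ is, by definition, $\mathcal{F}$-stable convergence of $Q(t)M_t$ towards the $\mathcal{F}$-conditionally centered Gaussian vector $\eta Z$ with $Z$ independent of $\mathcal{F}$. By the Cram\'er--Wold device it is enough to treat the scalar functionals $\theta^{\mathsf{T}}Q(t)M_t$, $\theta\in\R^p$, and by the characterization of stable convergence through bounded test variables it suffices to prove that, for every bounded $\mathcal{F}$-measurable $V$,
\begin{equation*}
\mathbb{E}\left[V\,e^{\,i\theta^{\mathsf{T}}Q(t)M_t}\right]\longrightarrow \mathbb{E}\left[V\,e^{-\frac12\theta^{\mathsf{T}}\eta\eta^{\mathsf{T}}\theta}\right],\qquad t\to\infty,
\end{equation*}
the right-hand side being exactly $\mathbb{E}\big[V\,\mathbb{E}[e^{\,i\theta^{\mathsf{T}}\eta Z}\mid\mathcal{F}]\big]$, i.e. the characteristic functional of the mixed normal law $\eta Z$.

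Next I would exploit the stochastic exponential. Fixing $\theta$ and $t$, set $N_s:=\theta^{\mathsf{T}}Q(t)M_s$, a continuous local martingale in $s$ with $N_0=0$ and quadratic variation $\langle N\rangle_s=\theta^{\mathsf{T}}Q(t)\langle M\rangle_s Q(t)^{\mathsf{T}}\theta$. Its Dol\'eans exponential $\mathcal{E}(iN)_s=\exp\!\big(iN_s+\tfrac12\langle N\rangle_s\big)$ is a complex local martingale equal to $1$ at $s=0$, so that $e^{\,iN_s}=\mathcal{E}(iN)_s\,e^{-\frac12\langle N\rangle_s}$. This yields the decomposition
\begin{equation*}
\begin{aligned}
\mathbb{E}\left[V e^{\,iN_t}\right]-\mathbb{E}\left[V e^{-\frac12\theta^{\mathsf{T}}\eta\eta^{\mathsf{T}}\theta}\right]
&=\underbrace{\mathbb{E}\left[V\,e^{-\frac12\langle N\rangle_t}\big(\mathcal{E}(iN)_t-1\big)\right]}_{(\mathrm{I})}\\
&\quad+\underbrace{\mathbb{E}\left[V\big(e^{-\frac12\langle N\rangle_t}-e^{-\frac12\theta^{\mathsf{T}}\eta\eta^{\mathsf{T}}\theta}\big)\right]}_{(\mathrm{II})}.
\end{aligned}
\end{equation*}
Since $\langle N\rangle_t=\theta^{\mathsf{T}}Q(t)\langle M\rangle_t Q(t)^{\mathsf{T}}\theta\stackrel{\mathbb{P}}{\longrightarrow}\theta^{\mathsf{T}}\eta\eta^{\mathsf{T}}\theta$ by hypothesis and the integrand of $(\mathrm{II})$ is bounded by $2\norm{V}_{\infty}$, dominated convergence gives $(\mathrm{II})\to0$.

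It then remains to show $(\mathrm{I})\to0$, which is the whole content of the theorem: it encodes that the limit is \emph{stable} (the Gaussian factor $Z$ is independent of $\mathcal{F}$), not merely that $N_t$ is asymptotically mixed normal. Using $|e^{-\frac12\langle N\rangle_t}|\le1$ and $|V|$ bounded, I would localize $\mathcal{E}(iN)$ by stopping times $\tau_n\uparrow\infty$ reducing it to a genuine martingale, and then control the residual through the uniform smallness $\norm{Q(t)}\to0$, which supplies the Lindeberg-type bound making the increments of $N$ asymptotically negligible, together with the tightness furnished by the convergence of $\langle N\rangle_t$. Equivalently, one may represent $N$ via the Dambis--Dubins--Schwarz theorem as a time-changed Brownian motion and invoke path-continuity: as $\langle N\rangle_t$ converges and $\norm{Q(t)}\to0$, the conditional law of $N_t$ given $\mathcal{F}$ becomes centered Gaussian with variance $\theta^{\mathsf{T}}\eta\eta^{\mathsf{T}}\theta$, which is precisely $\sqrt{\theta^{\mathsf{T}}\eta\eta^{\mathsf{T}}\theta}\,Z$ with $Z\perp\mathcal{F}$.

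The main obstacle lies exactly in $(\mathrm{I})$: because $\eta$ and $A$ are $\mathcal{F}=\mathcal{F}_\infty$-measurable whereas $N_t$ depends on the whole trajectory of $M$, one cannot simply condition on a fixed finite-horizon $\sigma$-field and read off the conditional Gaussian limit. The remedy I would carry out is to approximate $V$ by $\mathcal{F}_r$-measurable variables (dense in $L^1(\mathcal{F}_\infty)$), to discard the contribution of $[0,r]$ using $\norm{Q(t)Q(t)^{\mathsf{T}}}\to0$, and to identify the conditional Gaussian limit of the increment over $[r,t]$ by the time-change argument; letting $r\to\infty$ afterwards removes the approximation. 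Once $(\mathrm{I})\to0$ is secured, combining it with $(\mathrm{II})\to0$ yields the displayed convergence and hence the asserted joint stable convergence $(Q(t)M_t,A)\stackrel{\mathcal{D}}{\longrightarrow}(\eta Z,A)$ for every random matrix $A$.
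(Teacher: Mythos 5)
First, a remark on the comparison itself: the paper does not prove this statement. It is recalled verbatim in Appendix~B as \cite[Theorem 4.1]{Zanten} (van Zanten's multivariate stable CLT for continuous local martingales), so there is no internal proof to measure your attempt against; your proposal can only be judged on its own merits.

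On those merits there is a genuine gap. Your framework is the right one — recognizing the conclusion as $\mathcal{F}$-stable convergence, reducing by Cram\'er--Wold to scalar functionals, testing against bounded $\mathcal{F}$-measurable $V$, and splitting $\mathbb{E}[Ve^{iN_t}]$ via the Dol\'eans exponential into the terms $(\mathrm{I})$ and $(\mathrm{II})$ — and $(\mathrm{II})\to 0$ is indeed immediate from the hypothesis on $Q(t)\langle M\rangle_tQ(t)^{\mathsf{T}}$ and dominated convergence. But you yourself identify $(\mathrm{I})\to 0$ as ``the whole content of the theorem'' and then do not prove it: you list three candidate devices (localization of $\mathcal{E}(iN)$, a Dambis--Dubins--Schwarz time change, approximation of $V$ by $\mathcal{F}_r$-measurable variables followed by $r\to\infty$) without carrying any of them out, and it is precisely in executing one of these that the hypotheses $\norm{Q(t)}\to 0$ and the invertibility of $Q(t)$ must actually be used. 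Moreover, the heuristic you offer in support — ``the conditional law of $N_t$ given $\mathcal{F}$ becomes centered Gaussian with variance $\theta^{\mathsf{T}}\eta\eta^{\mathsf{T}}\theta$'' — is false as stated: $N_t=\theta^{\mathsf{T}}Q(t)M_t$ is $\mathcal{F}_t\subset\mathcal{F}$-measurable, so its conditional law given $\mathcal{F}$ is a Dirac mass; the Gaussian factor $Z$ independent of $\mathcal{F}$ exists only on an extension of the probability space and only in the limit. The correct argument (as in van Zanten's paper, or in the nesting-condition approach of Jacod--Shiryaev / Touati \cite{Touati}) conditions on $\mathcal{F}_r$ for fixed finite $r$, shows that the contribution of $[0,r]$ is annihilated by $\norm{Q(t)}\to 0$, identifies the conditional limit of the increment over $[r,t]$, and only then lets $r\to\infty$. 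Until that chain is written out, the proof is a plan rather than a proof.
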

\begin{lemme}\label{convergence (Xn,Yn) to (X,X) in law}
Let $X_n$ and $Y_n$ two random variables such that $X_n\stackrel{\mathcal{D}}{\longrightarrow}X$ and $X_n-Y_n\stackrel{\mathcal{D}}{\longrightarrow}0$, as $N\to\infty$, then $(X_n,Y_n)\stackrel{\mathcal{D}}{\longrightarrow}(X,X)$, as $N\to\infty$. 
\end{lemme}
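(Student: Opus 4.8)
The plan is to deduce the statement from the joint form of Slutsky's theorem combined with the continuous mapping theorem. The first step I would take is to upgrade the hypothesis on the differences: since the limit in $X_n-Y_n\stackrel{\mathcal{D}}{\longrightarrow}0$ is the degenerate constant $0$, convergence in distribution here is equivalent to convergence in probability, so that $X_n-Y_n\stackrel{\mathbb{P}}{\longrightarrow}0$ as $N\to\infty$. This is the classical fact that convergence in law to a constant and convergence in probability to that same constant coincide; it is what makes the two scalar (or vector) hypotheses usable for a genuinely \emph{joint} conclusion.

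Next, combining the assumption $X_n\stackrel{\mathcal{D}}{\longrightarrow}X$ with $X_n-Y_n\stackrel{\mathbb{P}}{\longrightarrow}0$, I would invoke the joint version of Slutsky's theorem to obtain the vector convergence $(X_n,\,X_n-Y_n)\stackrel{\mathcal{D}}{\longrightarrow}(X,0)$. The point that must not be skipped is that joint convergence in distribution cannot in general be read off from the two marginal convergences; it is precisely the fact that the second coordinate converges to a \emph{constant} that allows the pair to converge jointly, and this is exactly the content of Slutsky's lemma in its joint formulation.

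Finally, I would apply the continuous mapping theorem with the continuous map $\varphi(u,v)=(u,\,u-v)$, which gives
\begin{equation*}
\varphi\big(X_n,\,X_n-Y_n\big)=(X_n,\,Y_n)\stackrel{\mathcal{D}}{\longrightarrow}\varphi(X,0)=(X,X),\qquad\text{as }N\to\infty,
\end{equation*}
which is the claim. There is no real obstacle in this argument; the only two points requiring care are the passage from convergence in law to $0$ to convergence in probability, and the use of the joint (rather than merely marginal) form of Slutsky's theorem. Should one prefer to avoid quoting the joint Slutsky statement, an equivalent route is to work with characteristic functions: writing $\mathbb{E}\!\left(e^{i\langle s,X_n\rangle+i\langle t,Y_n\rangle}\right)=\mathbb{E}\!\left(e^{i\langle s+t,X_n\rangle}e^{i\langle t,Y_n-X_n\rangle}\right)$ and controlling the second factor by $\lvert e^{i\langle t,Y_n-X_n\rangle}-1\rvert$, which tends to $0$ in probability and is bounded, one recovers the limit $\mathbb{E}\!\left(e^{i\langle s+t,X\rangle}\right)$, the characteristic function of $(X,X)$.
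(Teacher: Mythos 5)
Your proof is correct. The paper states this lemma in the appendix without any proof (treating it as a standard fact), so there is nothing to compare against; your chain --- convergence in law to the constant $0$ upgraded to convergence in probability, the joint form of Slutsky's theorem giving $(X_n,\,X_n-Y_n)\stackrel{\mathcal{D}}{\longrightarrow}(X,0)$, and the continuous mapping theorem applied to $\varphi(u,v)=(u,\,u-v)$ --- is exactly the standard argument, and your alternative characteristic-function route is also sound.
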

\bibliographystyle{plain}

\end{document}